\renewcommand*\env@matrix[1][*\c@MaxMatrixCols c]{%
  \hskip -\arraycolsep
  \let\@ifnextchar\new@ifnextchar
  \array{#1}}
\definecolor{LemonChiffon}{rgb}{100, 98, 80}
\definecolor{myblue}{rgb}{0,0.4,0.8}
\definecolor{orange}{rgb}{1, 0.4, 0}
\definecolor{mygreen}{rgb}{0, 0.8, 0.2}
\definecolor{myred}{rgb}{204, 0, 0}
\definecolor{violet}{RGB}{0.4,0.2,1}
\definecolor{brown}{rgb}{0.6, 0.4, 0}
\theoremstyle{plain}
\newtheorem{Thm}{Theorem}[section]
\newtheorem{Lem}[Thm]{Lemma}
\newtheorem{Prop}[Thm]{Proposition}
\newtheorem{Cor}[Thm]{Corollary}
\newtheorem*{claim1}{Claim A}
\newtheorem*{claim2}{Claim B}
\newtheorem*{claim3}{Claim C}
\theoremstyle{definition}
\tikzstyle{vertex}=[circle, draw, inner sep=0pt, minimum size=6pt] 
\title{Extensions of results on phylogeny graphs of degree bounded digraphs}
\author[]{Myungho Choi\thanks{Corresponding author
\\E-mail addresses: nums8080@snu.ac.kr (M.Choi), srkim@snu.ac.kr (S.-R.Kim)
}
}
\author[]{Suh-Ryung Kim
}
\affil[]{Department of Mathematics Education,
Seoul National University, Seoul 08826, Republic of Korea}
\begin{document}
\maketitle
\begin{abstract}
An acyclic digraph in which every vertex has indegree at most $i$ and outdegree at most $j$ is called an $(i,j)$ digraph for some positive integers 
$i$ and $j$.
The phylogeny graph of a digraph $D$ has $V(D)$ as the vertex set and an edge $uv$ if and only if one of the following is true: $(u,v) \in A(D)$; $(v,u) \in A(D)$;  $(u,w) \in A(D)$ and $(v,w) \in A(D)$ for some $w \in V(D)$.
 A graph $G$ is a phylogeny graph (resp.\ an $(i,j)$ phylogeny graph) if there is an acyclic digraph $D$ (resp.\ an $(i,j)$ digraph $D$) such that the phylogeny graph of $D$ is isomorphic to $G$.
Lee~{\em et al.} (2017) and Eoh and Kim (2021) studied the $(2,2)$ phylogeny graphs, $(1,j)$ phylogeny graphs, $(i,1)$ phylogeny graphs, and $(2,j)$ phylogeny graphs.
Their work was motivated by problems related to evidence propagation in a Bayesian network for which it is useful to know which acyclic digraphs have chordal moral graphs (phylogeny graphs are called moral graphs in Bayesian network theory).
In this paper,
we extend their work by 
giving necessary conditions of chordal $(i,2)$ phylogeny graphs.
We go further to give necessary conditions of $(i,j)$ phylogeny graphs by listing forbidden induced subgraphs.
\end{abstract}
\noindent
{\it Keywords.} Competition graph, Phylogeny graph, Moral graph, $(i,j)$ digraph, Forbidden subgraph characterization, Chordal graph

\noindent
{{{\it 2010 Mathematics Subject Classification.} 05C20, 05C75}} 

\section{Introduction}
Throughout this paper, we deal with simple graphs and simple acyclic digraphs.

Given an acyclic digraph $D$, the competition graph of $D$, denoted by $C(D)$, is the simple graph having vertex set $V(D)$ and
edge set \[\{uv \mid (u, w), (v, w) \in A(D) \text{ for some $w \in V (D)$}\}.\] 
Since Cohen \cite{cohen} introduced the notion of competition graphs in
the study of predator–prey concepts in ecological food webs, multiple variants of competition graphs have been introduced
and studied.
For recent work related to competition graphs, see \cite{factor20111,KA,LiChang,ZR}. 

The notion of phylogeny graphs was introduced by Roberts and Sheng \cite{roberts1997phylogeny} as a variant of competition graphs. (See also 
\cite{hartke2005elimination,park2013phylogeny,Roberts98extremalphylogeny,roberts1998phylogeny,roberts2000phylogeny,zhao2006note} for more on phylogeny graphs.) 
Given an acyclic digraph $D$, the {\it underlying graph} of $D$, denoted by $U(D)$, is the simple graph with vertex set $V(D)$ and edge set $\{uv\mid (u,v) \in A(D) \text{ or } (v,u) \in A(D)\}$.
The {\it phylogeny graph} of an acyclic digraph $D$, denoted by $P(D)$, is the graph with the vertex set $V(D)$ and edge set $E(U(D))\cup E(C(D))$.
An edge is called a {\it cared edge} in $P(D)$ if the edge belongs to $E(C(D))$ but not to $E(U(D))$. 
For a cared edge $xy$ in $P(D)$, there
is a common out-neighbor $v$ of $x$ and $y$ in $D$ and it is said that $xy$ is {\it taken care} of by $v$ or that
$v$ {\it takes care} of $xy$. A vertex in $D$ is called a {\it caring vertex} if an edge of $P(D)$ is taken
care of by the vertex.

Given a cycle of a graph, a {\it chord} of the cycle is an edge between nonconsecutive vertices.
Given a graph, we call an induced subgraph that is a chordless a cycle of length at least 4 a {\it hole}.
 A graph is said to be {\it chordal} if it does not contain a hole.

“Moral graphs”, having arisen from studying Bayesian networks, are the same as phylogeny graphs. One of the best-known problems, in the context of Bayesian networks, is related to the propagation of evidence. It consists of the assignment of probabilities to the values of the rest of the variables, once the values of some variables are known. Cooper \cite{cooper1990computational} showed that this problem is NP-hard. Noteworthy algorithms for this problem are given by Pearl \cite{pearl1986fusion}, Shachter \cite{shachter1988probabilistic}, and Lauritzen and Spiegelhalter \cite{lauritzen1988local}. Those algorithms include a step of triangulating a moral graph, that is, adding proper edges to a moral graph to form a chordal graph.

 Stief \cite{steif1982frame} showed that there is a forbidden subgraph characterization of acyclic digraphs whose competition graphs are interval.
In that respect, Hefner~{\em et al}. \cite{hefner1991j} placed restrictions on the indegree and the outdegree of vertices of acyclic digraphs to obtain the list of forbidden subdigraphs for acyclic digraphs whose competition graphs are interval.
They called an acyclic digraph with indegree at most $i$ and outdegree at most $j$ an $(i,j)$ {\it digraph} for positive integers $i$ and $j$.

 A graph $G$ is an $(i,j)$ {\it phylogeny graph} if there is an $(i,j)$ digraph $D$ such that $P(D)$ is isomorphic to $G$.
Throughout this paper, we assume that variables $i$ and $j$ belong to the set of positive integers unless otherwise stated.

We present two main theorems in this paper.
One of them gives a necessary condition for an $(i,2)$ digraph having a chordal phylogeny graph as follows:
 \begin{Thm}\label{thm:main}
Let $H$ be a hole with length $l$ in the underlying graph of an $(i,2)$ digraph $D$.
If $l \geq 3i+1$,
then the subgraph of the phylogeny graph of $D$ induced by $V(H)$ has a hole.
Furthermore, for all $i\geq 2$, there is an $(i,2)$ digraph $D$ with hole $H$ of length $l=3i$ in $U(D)$
such that the subgraph of $P(D)$ induced by $V(H)$ does not have a hole.
\end{Thm}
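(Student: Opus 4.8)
The plan is to treat the two halves separately: the implication by a structural analysis of how $D$ orients $H$, and the sharpness by an explicit construction generalising the case $i=2$. For the implication I would first fix a hole $H = v_1 v_2 \cdots v_l v_1$ in $U(D)$ and record, for each edge $v_k v_{k+1}$, whether $D$ contains $(v_k,v_{k+1})$ or $(v_{k+1},v_k)$. Call $v_k$ a \emph{source} if both incident hole-edges leave it, a \emph{sink} if both enter it, and \emph{transitive} otherwise. Since $D$ is acyclic the hole is not a directed cycle, so sources and sinks alternate along $H$ and are equal in number, say $s\ge 1$. Because the outdegree is at most $2$, a source spends both out-arcs on its hole-neighbours, a transitive vertex one, and a sink none; hence at most $l$ of the out-arcs issuing from $V(H)$ do not lie on $H$, and any cared edge incident with a source must be taken care of by one of its two hole-neighbours.

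Next I would encode the cared edges of $P(D)$ inside $V(H)$ through the sets $N^-(w)\cap V(H)$ of in-neighbours of the caring vertices $w$: each such set is a clique of $P(D)[V(H)]$ of size at most $i$ (indegree bound), these cliques cover every chord, and each hole-vertex lies in exactly as many of them as its outdegree, hence in at most two. Moreover every sink $v_k$ \emph{forces} the short chord $v_{k-1}v_{k+1}$. Assuming for contradiction that $P(D)[V(H)]$ is chordal, it is $2$-connected and contains the spanning cycle $H$, so a perfect elimination ordering amounts to clipping ears of the $l$-gon. I would then argue that an edge-clique cover of a triangulation of the $l$-gon by $\le i$-cliques, in which each vertex is used at most twice and the short chords at the sinks are present, can exist only when $l\le 3i$, contradicting $l\ge 3i+1$.

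The genuine difficulty is this last step, and it is emphatically \emph{not} a counting of chords: a single caring vertex of indegree $i$ already produces a clique on $i$ hole-vertices and hence up to $\binom{i}{2}$ chords, so the number of chords can far exceed $l-3$ and a naive count is hopeless. What must be controlled is the \emph{geometry} of how these $\le i$-cliques, together with the chords forced at the sinks, can tile a triangulation of the $l$-gon when each vertex enters at most two cliques and a source (whose out-arcs are both spent on the cycle) can enter a clique only through a hole-neighbour. Concretely, the forced short chords at the sinks tend to create a shorter chordless ``inner'' cycle that must itself be triangulated by further cliques, and it is exactly the out-degree budget of the sources and transitive vertices that caps this recursion at $l=3i$; isolating this as a standalone lemma about covering a polygon triangulation by cyclically supported cliques of bounded size is where the constant $3$ must be made to appear, and I expect it to be the main obstacle.

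For sharpness I would exhibit, for each $i\ge 2$, the following $(i,2)$ digraph on $3i$ vertices. Take three corners $T_1,T_2,T_3$ and three sides carrying interior vertices $a_1,\dots,a_{i-1}$, $b_1,\dots,b_{i-1}$, $c_1,\dots,c_{i-1}$, read cyclically as $T_1,a_1,\dots,a_{i-1},T_2,b_1,\dots,b_{i-1},T_3,c_1,\dots,c_{i-1}$. Orient one side as the directed path $a_1\to a_2\to\cdots\to a_{i-1}$ together with $a_1\to T_1$ and $a_{i-1}\to T_2$, and add the spare arcs $a_2\to T_1,\dots,a_{i-1}\to T_1$; do the same on the other sides under the rotation $T_1\to T_2\to T_3$, $a\to b\to c$. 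Then every interior vertex has outdegree exactly $2$, every corner is a sink of indegree exactly $i$, the digraph is acyclic (the interiors form three directed paths into the corner-sinks), and $U(D)$ on these vertices is the $3i$-cycle $H$. The only caring vertices are $T_1,T_2,T_3$, so the cared edges are precisely the three cliques $N^-(T_1)=\{a_1,\dots,a_{i-1},c_{i-1}\}$, $N^-(T_2)=\{b_1,\dots,b_{i-1},a_{i-1}\}$, $N^-(T_3)=\{c_1,\dots,c_{i-1},b_{i-1}\}$, each of size $i$ and meeting the next in a single vertex. Eliminating first the three corners (each simplicial, being an ear on an edge of its clique), then the private vertices of the three cliques, leaves the triangle $a_{i-1}b_{i-1}c_{i-1}$; thus $P(D)[V(H)]$ has a perfect elimination ordering and contains no hole. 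For $i=2$ this is exactly the hexagon with sources $v_1,v_3,v_5$ and sinks $v_2,v_4,v_6$, whose phylogeny graph on the hole is the triangle $v_1v_3v_5$ with three ears.
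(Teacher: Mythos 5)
Your proposal has a genuine gap in each half of the theorem.

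For the implication, your setup (sources/sinks/transitive vertices, the cliques $N^-(w)\cap V(H)$ of size at most $i$, each hole vertex in at most two of them, the short chords forced at sinks) is reasonable, but the step where the constant $3$ must appear---your ``standalone lemma'' on covering polygon triangulations---is exactly what you leave unproven, and you yourself flag it as the main obstacle. Worse, the constraints you retain are too weak to yield that lemma: cliques of size at most $i$ covering all chords, with each vertex used at most twice, \emph{can} tile triangulations of polygons longer than $3i$ (for instance, four triangles cover a zigzag triangulation of a $10$-gon with every vertex in at most two of them, although $i=3$), so any correct proof must exploit finer digraph structure that your abstraction discards. That missing structure is precisely what the paper isolates: since $H$ is induced in $U(D)$, all arcs among $V(H)$ lie on $H$; hence, after deleting the sinks $\Gamma_H$ to form the reduced cycle $C$, every chord of $C$ is cared for by a \emph{unique} vertex outside $V(H)$, and an entire connected component of chords of $C$ must share one caring vertex and therefore form a single clique of size at most $i$ (Proposition~\ref{prop:chord_property}). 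With that rigidity, the paper proves that a maximum clique in $V(C)$ of size at most $\lfloor (l-1)/2\rfloor$ forces a hole (Theorem~\ref{thm:maximumclique}, via Lemma~\ref{lem:induced-path-hole}), bounds maximum cliques in $V(C)$ by $i$ (Lemma~\ref{lem:maximum-size}), deduces that either $l-|\Gamma_H|\ge 2i+1$ or $l<3|\Gamma_H|$ forces a hole (Theorem~\ref{thm:determinant-hole}), and finishes with arithmetic: if neither held, then $2|\Gamma_H|\le l-|\Gamma_H|\le 2i$ gives $|\Gamma_H|\le i$, contradicting $l\ge 3i+1$. Your plan has no substitute for this chain.

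For sharpness, your construction is wrong for every $i\ge 3$: the ``spare arcs'' $a_2\to T_1,\dots,a_{i-1}\to T_1$ (and their rotations) are arcs of $D$, hence edges of $U(D)$, hence chords of your $3i$-cycle, so that cycle is \emph{not} a hole in $U(D)$ and the hypothesis of the ``furthermore'' statement fails; your claim that ``$U(D)$ on these vertices is the $3i$-cycle'' overlooks exactly these edges. The repair is what the paper does: route all spare out-arcs to a single \emph{new} vertex $u$ outside the hole, i.e.\ arcs $(v_{j,2},u)$ for $0\le j\le i-1$, so that $\{v_{0,2},\dots,v_{i-1,2}\}$ becomes a clique of cared edges while the $3i$-cycle stays induced in $U(D)$; chordality of $P(D)$ is then verified by a perfect elimination ordering. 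Your construction coincides with this idea only in the case $i=2$, where the set of spare arcs is empty, which is why that special case checks out.
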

Theorem~\ref{thm:main} extends the following theorem given by Lee~{\em et al}.~\cite{lee2017phylogeny}.
\begin{Thm}[\cite{lee2017phylogeny}] \label{thm:(2,2)-hole} Let $D$ be a $(2,2)$ digraph.
If the underlying graph of $D$ contains a hole $H$ of length at least $7$,
then the subgraph of the phylogeny graph of $D$ induced by $V(H)$ has a hole.
\end{Thm}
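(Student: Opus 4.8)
The plan is to prove the contrapositive of the first assertion: assuming the induced subgraph $P(D)[V(H)]$ is chordal, I will show $l\le 3i$. Since $P(D)[V(H)]$ contains the Hamiltonian cycle $H$ and is chordal, it contains a triangulation $T$ of the $l$-gon $H$, and every diagonal of $T$ is an edge of $P(D)[V(H)]$ that is not a cycle edge, hence a cared edge realised by a common out-neighbour in $D$. Because $H$ is an induced cycle of $U(D)$, there are no arcs of $D$ between cyclically non-adjacent vertices of $H$; consequently, if a diagonal $v_av_b$ is cared for by a vertex $w\in V(H)$, then $v_aw,v_bw\in E(U(D))$ forces $w$ to be the common cycle-neighbour of $v_a$ and $v_b$. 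Thus a diagonal at cyclic distance $2$ (a \emph{short} diagonal) can only be cared for inside $V(H)$ by the vertex between its endpoints, which is then a local sink of the orientation that $D$ induces on $H$, whereas every diagonal at cyclic distance at least $3$ (a \emph{long} diagonal) must be cared for by a vertex outside $V(H)$.

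First I would record the two constraints coming from the degree bounds. The orientation of $H$ (which is not a directed cycle, as $D$ is acyclic) has equally many local sources and local sinks, say $t$ of each, together with $l-2t$ \emph{transit} vertices; since a source uses both of its out-arcs on $H$, a transit vertex one, and a sink none, the number of \emph{extra} arcs (out-arcs of $V(H)$ with head outside $H$) is at most $(l-2t)+2t=l$. Each sink cares for exactly one short diagonal, namely the one joining its two cycle-neighbours, so at most $t$ short diagonals are accounted for internally. For the long diagonals I would use that \emph{all} diagonals of $T$ incident to a fixed vertex $v$ join $v$ to in-neighbours of out-neighbours of $v$: as $v$ has at most two out-neighbours and each caring vertex has in-degree at most $i$, the in-neighbours in $V(H)$ of a single external caring vertex $w$ form a clique $S_w$ with $|S_w|\le i$, fed by extra arcs. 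Hence every long diagonal lies in some clique $S_w$ of size at most $i$, and these cliques draw on the budget of at most $l$ extra arcs.

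The heart of the argument, and the step I expect to be the main obstacle, is to convert these constraints into the sharp bound $l\le 3i$. The difficulty is that a single caring clique $S_w$ of size $i$ contributes $\binom{i}{2}$ diagonals, so a naive count of diagonals against the extra-arc budget is hopelessly weak (it never beats $l$). What must be exploited is the \emph{non-crossing} structure of $T$ together with the way out-degree $2$ caps participation: a clique $S_w$ realised externally behaves inside the triangulated polygon like a single fat block splitting the cycle into at most $|S_w|\le i$ arcs, and each such arc must then be triangulated by diagonals whose caring vertices have already largely spent the out-arcs of the block's boundary vertices. I would formalise this by processing $T$ through its dual tree, repeatedly peeling ears (each tied to a distinct sink) while tracking that the maximal caring clique has at most $i$ boundary vertices, creating at most $i$ gaps, and that the out-degree-$2$ constraint forces each gap to contain at most two interior vertices before a fresh caring vertex or a crossing is required. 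Combined with the standard triangulation identity (number of ears $=$ number of interior triangles $+\,2$), the per-gap bound is what I expect to yield $l\le i+2i=3i$.

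Finally, for the second assertion I would exhibit, for every $i\ge 2$, an extremal $(i,2)$ digraph. Take the cycle $v_1v_2\cdots v_{3i}$ and orient its edges by repeating, for $k=0,\dots,i-1$ (indices mod $3i$), the block $v_{3k+1}\to v_{3k+2}$, $v_{3k+3}\to v_{3k+2}$, $v_{3k+3}\to v_{3k+4}$; this makes each $v_{3k+2}$ a local sink, each $v_{3k+3}$ a local source, and each $v_{3k+1}$ a transit vertex. Add one new vertex $w$ together with the arcs $v_{3k+1}\to w$ for all $k$, so that $w$ has in-degree $i$ and every $v_{3k+1}$ has out-degree $2$, while all other vertices keep out-degree at most $2$ and the digraph stays acyclic. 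In $P(D)$ the vertices $v_1,v_4,\dots,v_{3i-2}$ form a clique (all pointing to $w$), and each sink $v_{3k+2}$ contributes the short diagonal $v_{3k+1}v_{3k+3}$, triangulating the gap $v_{3k+1}v_{3k+2}v_{3k+3}v_{3k+4}$. Eliminating each $v_{3k+2}$, then each $v_{3k+3}$, and finally the clique $\{v_{3k+1}\}$ is a perfect elimination ordering, so $P(D)[V(H)]$ is chordal; meanwhile the only arcs among $V(H)$ are the oriented cycle edges, so $H$ is a hole of length $3i$ in $U(D)$, as required.
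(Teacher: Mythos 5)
Your setup is sound as far as it goes: reducing to a triangulation $T$ of the $l$-gon inside the chordal graph $P(D)[V(H)]$, the dichotomy between short diagonals (which can be cared for inside $V(H)$ only by the local sink between their endpoints) and long diagonals (which must be cared for externally), the budget of at most $l$ extra arcs, and the caring cliques $S_w$ of size at most $i$ are all correct observations; your extremal digraph for $l=3i$ is also correct and is, up to relabelling, the same construction as the one in the proof of Theorem~\ref{thm:main} (blocks of source/sink/transit vertices with all transit vertices feeding one external vertex). However, the proof has a genuine gap exactly where you flag it: the passage from these constraints to $l\le 3i$ is never carried out, only announced (``what I expect to yield $l\le i+2i=3i$''). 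The sketch does not close it. Concretely, the claim that each ear of $T$ is ``tied to a distinct sink'' is false: the short diagonal $v_{a-1}v_{a+1}$ of an ear need not be cared for by $v_a$; it can be cared for by a vertex outside $V(H)$, in which case no sink is forced at $v_a$. Likewise the asserted per-gap bound (at most two interior vertices per gap created by a caring clique) is nowhere derived from the outdegree-$2$ constraint, and, as you yourself note, a single clique $S_w$ can account for $\binom{i}{2}$ diagonals, so no naive count of diagonals against the arc budget can work. Since this counting argument is the entire content of the theorem, what you have is a plan with a hole at its centre, not a proof.

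For comparison: the paper does not prove this statement at all --- it is quoted from Lee~{\em et al}.~\cite{lee2017phylogeny} --- and the paper's own generalization, Theorem~\ref{thm:main}, is proved only for $i\ge 3$ by a quite different route: delete the sinks $\Gamma_H$ to form the cycle $C$, show via Proposition~\ref{prop:chord_property} that all chords of $C$ in one component of the chord graph share a single external caring vertex and hence form a clique of size at most $i$, and then produce a hole using the path-extension lemma (Lemma~\ref{lem:induced-path-hole}) through Theorems~\ref{thm:maximumclique} and~\ref{thm:determinant-hole}; for $i=2$ the paper falls back on precisely the cited theorem you are trying to prove. So to repair your argument you would either need to reproduce the original $(2,2)$ analysis of Lee~{\em et al}.\ or replace the ear-peeling heuristic by an actual mechanism for producing a hole from a long chordless stretch, such as an analogue of Lemma~\ref{lem:induced-path-hole}; the non-crossing structure of $T$ alone, as currently deployed, does not yield the bound.
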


Lee~{\em et al}.~\cite{lee2017phylogeny} gave a forbidden induced subgraph for $(2,2)$ phylogeny graphs as follows:
\begin{Thm} [\cite{lee2017phylogeny}]
\label{thm:(2,2)-K_5-free}
For any $(2,2)$ digraph $D$, the phylogeny graph of $D$ is $K_5$-free. 
\end{Thm}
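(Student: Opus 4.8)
\emph{Approach.} The plan is to argue by contradiction using a structured edge clique cover of $P(D)$. For a vertex $w$ of $D$, write $N^-_D(w)$ for its set of in-neighbours and $N^-_D[w]=\{w\}\cup N^-_D(w)$. Any two in-neighbours of $w$ are adjacent in $P(D)$ (they share the out-neighbour $w$), and $w$ is joined to each of them by an arc, so $N^-_D[w]$ induces a clique in $P(D)$; since $D$ is a $(2,2)$ digraph this clique has at most three vertices. Every edge of $P(D)$ lies in such a clique: an arc $(x,y)$ lies in $N^-_D[y]$, and a cared edge taken care of by $w$ lies in $N^-_D[w]$. Thus $\{N^-_D[w]\}_{w\in V(D)}$ is an edge clique cover of $P(D)$ by triangles, each triangle carrying a distinguished \emph{sink} $w$, and each vertex $v$ lies in at most $1+d^+_D(v)\le 3$ of them. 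A bound based only on these numerical limits is not decisive: $K_5$ can be covered by the four triangles $\{1,2,3\},\{1,4,5\},\{2,4,5\},\{3,4,5\}$ with every vertex in at most three of them. What rules this out is that three of those triangles would require the same sink $5$, which is impossible; so the real leverage is that the sinks are distinct, together with the acyclicity of $D$.

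\emph{Key steps.} Concretely, suppose $P(D)$ contains a $K_5$ on $S=\{v_1,\dots ,v_5\}$ and fix a topological ordering of $D$, which induces an order $u_1<\dots <u_5$ on $S$. I would first analyse the first vertex $u_1$, which has no in-neighbour inside $S$, so each of its four edges $u_1u_j$ is realised by an arc $u_1\to u_j$ or by a common out-neighbour of $u_1$ and $u_j$; either way it consumes an out-arc of $u_1$. A single out-neighbour $z$ accounts for at most two such edges — the arc $u_1z$ (only if $z\in S$) together with at most one cared edge (since $z$ has at most one in-neighbour besides $u_1$). As $u_1$ has outdegree at most $2$ yet must meet four edges, both out-neighbours lie in $S$ and each accounts for exactly two edges. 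This pins down a rigid configuration: writing $p,q$ for the out-neighbours of $u_1$, one gets $N^-_D(p)=\{u_1,p'\}$ and $N^-_D(q)=\{u_1,q'\}$ with $\{p,p',q,q'\}=\{u_2,u_3,u_4,u_5\}$ and arcs $u_1\to p$, $u_1\to q$, $p'\to p$, $q'\to q$; in particular $p$ and $q$ have indegree exactly $2$.

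\emph{Propagation and the obstruction.} Since $p'<p$ and $q'<q$, the top vertex $u_5$ is $p$ or $q$; say $u_5=p$. As $p$ and $q$ both have saturated indegree, neither edge $pq$ nor $pq'$ can be an arc, so both are cared for by out-neighbours of $u_5$, necessarily external; this uses both out-arcs of $u_5$ and forces arcs $q\to w$ and $q'\to w'$ to external prey $w,w'$, which are distinct since otherwise that prey would have the three in-neighbours $u_5,q,q'$. Now $q'$ is out-saturated (by $q'\to q$ and $q'\to w'$), so the edge $p'q'$ can only be the arc $p'\to q'$, giving $p'<q'$ and hence $p'<q$ via $q'\to q$. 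But then the last edge $p'q$ is unrealisable: $p'$ is now out-saturated and $q$ has saturated indegree, so $p'q$ could only be the arc $q\to p'$, which needs $q<p'$ and contradicts $p'<q$. This contradiction shows $P(D)$ is $K_5$-free. I expect the main difficulty to be the second step — extracting the rigid local structure at the source $u_1$ from the ``each out-arc meets at most two edges'' count — and then carrying the forced-arc bookkeeping through to the end without omitting an alternative realisation of the final edges, every exclusion resting on the interaction of the degree bounds with acyclicity.
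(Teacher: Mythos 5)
Your proof is correct, so let me compare it with the paper's treatment. The paper does not reprove this theorem itself (it cites Lee \emph{et al.}); what it proves is the generalization, Theorem~\ref{thm:complete-free} ($\omega(G)\le ij$ for every $(i,j)$ phylogeny graph), and its proof specialized to $i=j=2$ is the right benchmark. Your opening move coincides with the paper's: your minimal vertex $u_1$ in a topological order is exactly the paper's source $u$ of the subdigraph induced by the clique, and your counting argument recovers the same rigid structure that the paper gets from Lemma~\ref{lem:maximum-cared-edges} — both out-neighbours $p,q$ lie in the clique, $N^-_D(p)=\{u_1,p'\}$, $N^-_D(q)=\{u_1,q'\}$, and $\{u_1\},\{p,q\},\{p'\},\{q'\}$ partition the clique. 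The endgames, however, genuinely differ. The paper finds a \emph{second source} of the induced subdigraph (Lemma~\ref{lem:existence_source}) — in this notation $p'$, say — and notes that $p'$ and $q$ can only be adjacent via a common out-neighbour, which the partition forces to be $q'$; together with $q'\to q$ this is a directed $2$-cycle, and the contradiction is reached entirely inside the clique, never touching external vertices. You instead work from the \emph{maximal} vertex $u_5=p$, are pushed outside the clique to the two external carers $w,w'$, and trace forced arcs ($q\to w$, $q'\to w'$, then $p'\to q'$) until the edge $p'q$ has no realization. Both are sound; the paper's route is shorter in the $(2,2)$ case and, more importantly, scales to arbitrary $(i,j)$ via its chained directed walk and pigeonhole argument, whereas your forced-arc bookkeeping exploits $j=2$ heavily and would not extend readily. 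One spot to tighten in a final write-up: for the last edge $p'q$, the facts ``$p'$ out-saturated'' and ``$q$ indegree-saturated'' do not by themselves exclude the potential carer $q'$ (the arc $q\to q'$); that exclusion needs acyclicity, since $q'\to q$ already holds — consistent with your closing remark, but it should be stated explicitly.
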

The {\it join} of two graphs $G$ and $H$, denoted by $G \vee H$, is the graph formed from disjoint copies of $G$ and $H$ by joining each vertex of $G$ to each vertex of $H$ by an edge. 

The following theorem shows that $P_{7} \vee I_1$, $C_{7} \vee I_1$, $K_{1,4}$, and $K_{3,3}$ are also forbidden induced subgraphs of $(2,2)$ phylogeny graphs other than $K_5$.

\begin{Thm} \label{thm:final_forbidden}
For $i,j \geq 2$, if a graph $G$ contains \[K_{1,j+2},\quad K_{j+1,j+1},\quad  P_{2j+3}\vee I_1,\quad  C_{2j+3}\vee I_1,\quad  \mbox{or} \quad K_{ij+1}\] as an induced subgraph, then $G$ is not the phylogeny graph of an (i,j) digraph. Furthermore, if $i\geq 4$ and $j=2$ and a graph $G$ contains $K_{\lfloor \frac{3i}{2}\rfloor +2}$ as an induced subgraph, then $G$ is not the phylogeny graph of an $(i,j)$ digraph.
\end{Thm}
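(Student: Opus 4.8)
The plan is to reduce every case to two elementary features of an $(i,j)$ digraph $D$: for each vertex $w$, the in-neighborhood $N^-_D(w)$ induces a clique of $P(D)$ of size at most $i$ (its members pairwise compete at $w$), so that $\{w\}\cup N^-_D(w)$ is a clique of $P(D)$ of size at most $i+1$; and each vertex has at most $j$ out-neighbors. The first observation packages into a covering lemma: if $z$ is adjacent in $P(D)$ to every vertex of a set $S$, then each $u\in S$ is either an in-neighbor of $z$, or shares an out-neighbor with $z$, or is itself an out-neighbor of $z$; hence
\[
S\subseteq \bigl(N^-_D(z)\cap S\bigr)\ \cup\ \bigcup_{w\in N^+_D(z)}\bigl((\{w\}\cup N^-_D(w))\cap S\bigr),
\]
and every set on the right is a clique of $P(D)[S]$. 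Writing $\omega$ for the clique number of $P(D)[S]$, this yields $|S|\le (j+1)\,\omega$. I would apply this directly to the star and the two cones, taking $z$ to be the center (resp.\ the apex $I_1$) and $S$ its neighborhood. For $K_{1,j+2}$ the leaf set $S$ is independent, so $\omega=1$ and $|S|\le j+1<j+2$; for $P_{2j+3}\vee I_1$ and $C_{2j+3}\vee I_1$ the set $S$ is a chordless path or cycle, hence triangle-free, so $\omega=2$ and $|S|\le 2j+2<2j+3$. Each is a contradiction.

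The complete bipartite case is the first place the lemma alone is insufficient: taking $z\in A$ and $S=B$ gives only $|B|\le(j+1)\cdot 1=j+1$, which is met with equality. I would extract the equality structure and then invoke acyclicity. Equality forces, for every $a\in A$, that $a$ has exactly one in-neighbor in $B$ and that all $j$ of its out-neighbors are used up reaching the remaining $j$ vertices of $B$, each out-neighbor covering exactly one of them (since $B$ is independent, $N^-_D(w)\cap B$ is a single vertex). Symmetrically for $b\in B$. In particular the maps $g\colon A\to B$ and $f\colon B\to A$ sending a vertex to its unique cross in-neighbor are well defined and total, with arcs $g(a)\to a$ and $f(b)\to b$. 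Following $a_0,\ g(a_0),\ f(g(a_0)),\dots$ then produces an infinite strictly descending sequence in any topological order of $D$, equivalently a directed cycle (the two arcs between consecutive terms never form a $2$-cycle), contradicting acyclicity. Hence $K_{j+1,j+1}$ is forbidden.

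For $K_{ij+1}$ I would argue on a clique $K$ of $P(D)$ using a fixed topological order. Applying the covering idea to the last vertex of $K$ (every neighbor is an in-arc or a competition at an out-neighbor lying outside $K$) gives $|K|-1\le i+j(i-1)=ij+i-j$, while applying it to the first vertex of $K$ (all neighbors forward, each out-neighbor clique already containing that vertex) gives the sharper $|K|-1\le ij$. For $i<j$ the former already yields $|K|\le ij$, so $K_{ij+1}$ cannot occur; for $i\ge j$ the two bounds leave only the extremal possibility $|K|=ij+1$. I would rule this out exactly as in the $(2,2)$ reasoning underlying Theorem~\ref{thm:(2,2)-K_5-free}: equality rigidifies $D$ (the first vertex has all $j$ out-neighbors in $K$, each with saturated in-degree $i$, and their in-neighborhoods partition $K\setminus\{x_1\}$), after which one locates two clique vertices whose joining edge admits neither orientation (each would close a directed cycle or overload a saturated in-degree) nor a common out-neighbor (the only candidate prey already have full in-degree), a contradiction. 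This gives $\omega(P(D))\le ij$.

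The remaining bound, $K_{\lfloor 3i/2\rfloor+2}$ for $(i,2)$ digraphs with $i\ge 4$, is where I expect the real work: the generic estimate only gives $\omega\le 2i$, and the target $\lfloor 3i/2\rfloor+1$ is a genuine improvement driven by acyclicity together with $j=2$. The guiding picture is the extremal configuration of three prey of in-degree $i$ and predators of out-degree $2$ each pointing at a pair of prey; any two predators then share a prey and compete, so they form a clique, while the three in-degree constraints, summing to $3i$, cap the number of predators at $\lfloor 3i/2\rfloor$. To turn this into a proof I would show that in any clique $K$ the competition is effectively witnessed by a bounded family of prey, set the arc-slot count $\sum_w |N^-_D(w)\cap K|\le 2|K|$ against the requirement that the prey-cliques together with the underlying arcs cover all $\binom{|K|}{2}$ edges, and use acyclicity to remove the slack that pure counting cannot (edge-multiplicity bounds return only $2i$, and the ``each vertex lies in at most three covering cliques'' estimate only $3i+1$). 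The main obstacle is precisely this last step: organizing the prey and the forced arcs so that the factor $2i$ is sharpened to $\tfrac32 i$. I expect it to require a careful discharging over the topmost vertices of $K$ in the topological order, analogous to, but heavier than, the extremal analysis behind Theorem~\ref{thm:main}.
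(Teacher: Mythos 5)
Your covering lemma is the paper's Proposition~\ref{prop:bounded-clique} in different clothing, and the first four cases are handled correctly and essentially as the paper handles them: $K_{1,j+2}$, $P_{2j+3}\vee I_1$ and $C_{2j+3}\vee I_1$ follow from the bound $|S|\le(j+1)\omega$, and your equality analysis for $K_{j+1,j+1}$ (each vertex acquires exactly one in-neighbor on the other side, and tracing these in-neighbors backwards violates acyclicity) is the same mechanism as the paper's Lemma~\ref{lem:K_1,j+1} and Proposition~\ref{prop:forbidden_bipartite}. The first genuine gap is the extremal case $|K|=ij+1$ of the $K_{ij+1}$ bound. Your rigidification of the first vertex $x_1$ (all $j$ out-neighbors inside $K$, each with saturated in-degree $i$, their in-neighborhoods partitioning $K-\{x_1\}$) matches the paper's Lemma~\ref{lem:maximum-cared-edges}, but your finish---``one locates two clique vertices whose joining edge admits neither orientation nor a common out-neighbor''---is asserted, not proved, and for $j\ge 3$ it is not a one-step matter. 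The paper's proof of Theorem~\ref{thm:complete-free} does not exhibit a single unrealizable edge: it takes a \emph{second} source $w$ of $D[K]$, shows that for an out-neighbor $v_2\ne v_1$ of $x_1$ (where $w\in\langle v_1\rangle^-$) the edge $wv_2$ must be witnessed by a prey $y_1\in K$ lying in a class $\langle v_3\rangle^-$ with $v_3\ne v_1$, and iterates this to build a directed walk $v_2\to y_1\to v_3\to y_2\to v_4\to\cdots$ whose endpoints all lie in the $(j-1)$-element set $N^+_D(x_1)-\{v_1\}$; only the pigeonhole applied to this walk produces a closed directed walk and the contradiction. Your appeal to ``exactly the $(2,2)$ reasoning'' of Theorem~\ref{thm:(2,2)-K_5-free} covers $j=2$ only (and that argument is in the cited Lee et al.\ paper, not reproduced here); for general $j$ you have no argument, though this gap looks repairable along the paper's lines.

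The serious gap is the ``furthermore'' half: $\omega\le\lfloor 3i/2\rfloor+1$ for $(i,2)$ digraphs with $i\ge 4$. You state candidly that your counting methods return only $2i$ or $3i+1$ and that the sharpening to $\tfrac{3}{2}i$ would require a discharging argument you have not carried out; but that sharpening \emph{is} the content of this part of the theorem, and it is where the paper spends most of its effort (Theorem~\ref{thm:complete-free_j=2}). The paper's route is: reduce to odd $i=2k-1$ using Lemma~\ref{lem:expanding_complete} (an induced $K_l$ in an $(i,j)$ phylogeny graph inflates to an induced $K_{l+m}$ in an $(i+m,j)$ one, so refuting $K_{3k}$ for $(2k-1,2)$ digraphs suffices); take a counterexample $D$ with the fewest arcs, so that vertices outside the clique have no out-neighbors; fix a source $u$ of $D[K]$ with $N^+_D(u)=\{v,w\}$; establish three structural claims (if $v\in[u]^+$ and $v\to w$ then $N^+_D(v)=\{w\}$; if $v$ and $w$ are nonadjacent in $D$, some vertex $v'$ has almost all of $[v]^-\bigtriangleup[w]^-$ as in-neighbors; a source $x$ of the digraph induced by the symmetric difference has an out-neighbor $x^*$ with $N^-_D(x^*)=[v]^-\bigtriangleup[w]^--\{v,x^*\}$); and then close the cases $|[u]^+|=2$ and $|[u]^+|=1$ with cardinality contradictions against $|N^-_D(\cdot)|\le 2k-1$. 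None of this machinery---the arc-minimality device, the parity reduction, or the forced-arc claims---appears in or is implied by your proposal, so this half of the theorem remains unproven.
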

We denote the set of out-neighbors and the set of in-neighbors of a vertex $v$ in a digraph $D$ by $N_D^+(v)$ and $N_D^-(v)$, respectively.
In addition, we denote the set of neighbors of a vertex $v$ in a graph $G$ by $N_G(v)$.
When no confusion is likely to occur,
 we omit $D$ or $G$ to just write $N^+(v)$, $N^-(v)$, and $N(v)$.

\section{A necessary condition for an $(i,2)$ phylogeny graph to be chordal}\label{sec:chordal}

Let $D$ be an acyclic digraph.
Suppose that the underlying graph $U(D)$ of $D$ has a hole $H=v_1v_2 \cdots v_lv_1$ of length $l$ for some $l \geq 5$.
Let $G$ be the subgraph of the phylogeny graph $P(D)$ induced by $V(H)$ and $D_H$ be the subdigraph of $D$ induced by $V(H)$.
Then, since each vertex has degree $2$ in $U(D_H)$, each of $v_1,v_2,\ldots,v_l$ has (i) exactly two in-neighbors, or (ii) exactly one out-neighbor and exactly one in neighbor, or (iii) exactly two out-neighbors in $D_H$.
Since $D$ is acyclic, $D_H$ is acyclic and so there exists a vertex in $D_H$ of indegree $2$.
Let $v_{l_1},v_{l_2},\ldots,v_{l_k}$ be the vertices in $V(H)$ having two in-neighbors in $D_H$ for an integer $k \geq 1$.
We denote the set $\{v_{l_1},v_{l_2},\ldots,v_{l_k}\}$ by $\Gamma_H$.
Then any two vertices in $\Gamma_H$ do not lie consecutively on $H$ and so $1\leq k \leq \lfloor \frac{l}{2}\rfloor$.
Therefore we obtain the cycle $C$ of length $l-k$ in $P(D)$ by deleting $v_{l_1},v_{l_2},\ldots,v_{l_k}$ from $G$ satisfying the property that each edge of $C$ either is taken care of some vertex in $\Gamma_H$ or lies on $H$.
We call such a cycle the {\it cycle obtained from $H$ by $\Gamma_H$}.
When no confusion is likely to occur, we omit $\Gamma_H$ in the cycle obtained from $H$ by $\Gamma_H$ to just write the cycle obtained from $H$.
We note that the length of $C$ is at least $l- \lfloor \frac{l}{2}\rfloor$ and at most $l-1$.
Moreover, by (ii) and (iii), each vertex on $C$ has at least one out-neighbor in $V(H)$.
Since the average outdegree of the vertices in $V(H)$ is $1$, the number of vertices on $C$ having two out-neighbors in $V(H)$ is equal to $|\Gamma_H|$.
In addition, there is no arc in $V(H)$ between nonconsecutive vertices on $C$, which implies that each chord of $C$ is a cared edge by a vertex in $V(D) - V(H)$.
Hence we immediately have the following lemma.
\begin{Lem} \label{lem:char-cycle}
Let $H$ be a hole with length $l \geq 5$ in the underlying graph of an acyclic digraph $D$ and $C$ be the cycle obtained from $H$ by $\Gamma_H$.
Then the following are true:
\begin{enumerate}[(1)]
\item the length of $C$ is at least $l-\lfloor \frac{l}{2}\rfloor$ and at most $l-1$;
\item each vertex on $C$ has at least one out-neighbor in $V(H)$;
\item the number of vertices on $C$ having two out-neighbors in $V(H)$ is equal to $|\Gamma_H|$;
\item for each chord $uv$ of $C$ in $P(D)$, $uv$ is a cared edge and each vertex taking care of $uv$ belongs to $V(D) - V(H)$.
\end{enumerate}
\end{Lem}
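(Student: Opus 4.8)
\noindent The plan is to verify the four assertions one by one, using only the trichotomy of the vertices of $V(H)$ into types (i), (ii), (iii) and the description of $C$ as the cycle obtained from $G$ by deleting $\Gamma_H$, both recorded above. Assertion (1) is immediate: by construction the length of $C$ is $l-|\Gamma_H|=l-k$, and the bounds $1\le k\le\lfloor l/2\rfloor$ already noted rearrange to $l-\lfloor l/2\rfloor\le l-k\le l-1$. For assertion (2), the vertices of $C$ are exactly those of $V(H)\setminus\Gamma_H$, that is, the vertices of type (ii) or (iii), and by definition each of these has at least one out-neighbor in $D_H$ and hence in $V(H)$.

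For assertion (3) I would run a short counting argument inside $D_H$. Since $U(D_H)$ is the cycle $H$, the digraph $D_H$ has exactly $l$ arcs, so the outdegrees in $D_H$ sum to $l$. Letting $k,m,p$ denote the numbers of vertices of types (i), (ii), (iii), the two relations $k+m+p=l$ and $m+2p=l$ (the second being the outdegree sum, as the three types contribute $0,1,2$ to it) force $p=k=|\Gamma_H|$. The type-(iii) vertices are precisely the vertices of $C$ with two out-neighbors in $V(H)$, which is the claim.

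The only genuinely delicate point, and the one I expect to be the main obstacle, is assertion (4). Given a chord $uv$ of $C$, I would first observe that the cyclic order of the vertices of $C$ agrees with their order along $H$; since $uv$ is a chord, some vertex of $C$ lies strictly between $u$ and $v$ on $C$ and therefore on $H$ as well, so $u$ and $v$ are non-consecutive on $H$. Because $H$ is a hole, hence chordless, in $U(D)$, there is no arc joining $u$ and $v$, and consequently the edge $uv$ of $P(D)$ must be a cared edge. It then remains to locate its caretakers: if some $w\in V(H)$ took care of $uv$, then $u$ and $v$ would be the two in-neighbors of $w$, forcing $w\in\Gamma_H$ and $u,v$ to be the two $H$-neighbors of $w$; but deleting $w$ when forming $C$ would then make $u$ and $v$ consecutive on $C$, contradicting that $uv$ is a chord. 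Hence every vertex taking care of $uv$ lies in $V(D)-V(H)$, which is assertion (4).
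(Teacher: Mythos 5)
Your proof is correct and takes essentially the same approach as the paper, which derives the lemma directly from the construction of $C$: your count of type (i), (ii), (iii) vertices in part (3) is just an explicit version of the paper's observation that the average outdegree in $V(H)$ is $1$, and your part (4) spells out the argument (chordlessness of $H$ plus the fact that a caretaker inside $V(H)$ would lie in $\Gamma_H$ and force $u,v$ to be consecutive on $C$) that the paper treats as immediate.
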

We obtain some useful characteristics
 on the cycle obtained from a hole in the underlying graph of an $(i,2)$ digraph as follows.
\begin{Prop} \label{prop:chord_property}
Let $H$ be a hole with length $l \geq 5$ in the underlying graph of an $(i,2)$ digraph $D$ and $C$ be the cycle obtained from $H$.
Suppose that $C$ has a chord $uv$ in $P(D)$.
Then the following are true:
\begin{enumerate}[(1)]
\item there exists exactly one vertex $w$ taking care of $uv$ in $D$;
\item $w$ is the only out-neighbor in $V(D)- V(H)$ of each of $u$ and $v$;
     \item for the subgraph induced by the chords of $C$ in $P(D)$, if $T$ is its component containing $uv$, then $w$ is the common out-neighbor in $D$ of the
     vertices in $T$ and $V(T)$ forms a clique in $P(D)$.
\end{enumerate}
\end{Prop}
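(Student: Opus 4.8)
The plan is to read off (1) and (2) directly from the outdegree bound together with Lemma~\ref{lem:char-cycle}, and then to bootstrap these two facts along the component $T$ to obtain (3).

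First I would prove (1) and (2) simultaneously. Since $u$ lies on $C$, part (2) of Lemma~\ref{lem:char-cycle} gives an out-neighbor $x_u \in V(H)$ of $u$, and part (4) guarantees that any vertex taking care of the chord $uv$ lies in $V(D)-V(H)$; fix one such caretaker $w$, so $(u,w),(v,w)\in A(D)$ and $w\notin V(H)$. Because $D$ is an $(i,2)$ digraph, $u$ has outdegree at most $2$, while $x_u$ and $w$ are two distinct out-neighbors of $u$ (one in $V(H)$, one outside); hence $N^+(u)=\{x_u,w\}$ and $w$ is the unique out-neighbor of $u$ in $V(D)-V(H)$. The same reasoning applied to $v$ shows that $w$ is the unique out-neighbor of $v$ in $V(D)-V(H)$, which is (2). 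For (1), any caretaker of $uv$ is a common out-neighbor of $u$ and $v$ lying in $V(D)-V(H)$ by Lemma~\ref{lem:char-cycle}(4); since $w$ is the only out-neighbor of $u$ outside $V(H)$, every caretaker equals $w$, so $w$ is the unique vertex taking care of $uv$.

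Next I would establish (3) by propagating the uniqueness in (2) along $T$. The key observation is that if two chords of $C$ share an endpoint $b$, then they have the same caretaker: applying (1) to each chord yields unique caretakers, and applying (2) to each chord shows that both caretakers coincide with the unique out-neighbor of $b$ in $V(D)-V(H)$. Since $T$ is a connected component of the subgraph induced by the chords of $C$ and it contains the edge $uv$, its edge set is connected under the relation of sharing an endpoint; hence repeatedly applying the key observation shows that every chord of $T$ is taken care of by one and the same vertex, namely the caretaker $w$ of $uv\in T$. Consequently, for every vertex $x\in V(T)$, which is an endpoint of some chord of $T$, we have $(x,w)\in A(D)$, so $w$ is a common out-neighbor in $D$ of all the vertices of $T$. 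Finally, any two vertices $x,y\in V(T)$ then satisfy $(x,w),(y,w)\in A(D)$, so $xy\in E(C(D))\subseteq E(P(D))$, and $V(T)$ is a clique in $P(D)$.

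I do not expect a genuine obstacle here. Once (2) pins down the unique external out-neighbor of each chord endpoint, statements (1) and (2) are immediate from the outdegree-$2$ hypothesis, and (3) is a connectivity argument. The one point demanding care is the ``shared-endpoint forces equal caretaker'' step, since this is exactly where the bound on the outdegree is used; the remainder of (3) is then a straightforward induction along a chord-path inside $T$, followed by the routine competition-graph clique argument.
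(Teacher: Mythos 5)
Your proposal is correct and follows essentially the same route as the paper: parts (1) and (2) come from the outdegree bound combined with Lemma~\ref{lem:char-cycle}(2) and (4), and part (3) propagates the unique external out-neighbor along $T$ (the paper does this via an explicit chord-path from an arbitrary vertex of $T$ to $u$, which is the same induction you phrase as ``shared endpoint forces equal caretaker'' plus connectivity). Your write-up even makes explicit the final clique step that the paper leaves implicit.
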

\begin{proof}
Since $uv$ is a chord of $C$ in $P(D)$,
$uv$ is a cared edge
and there exists a vertex $w$ taking care of $uv$, which belongs to $V(D)- V(H)$ by Lemma~\ref{lem:char-cycle}(4).
Then $w$ is a common out-neighbor of $u$ and $v$.
Since $D$ is an $(i,2)$ digraph, each of $u$ and $v$ has outdegree at most $2$.
Then, by Lemma~\ref{lem:char-cycle}(2),
each of $u$ and $v$
has at most one out-neighbor in $V(D)- V(H)$.
Therefore
$w$ is the only out-neighbor of each of $u$ and $v$ in $V(D)- V(H)$ and so
$w$ is the only vertex taking care of $uv$ in $D$.
Hence parts (1) and (2) are true.

To show part (3),
suppose, for the subgraph induced by the chords of $C$ in $P(D)$, $T$ is its component containing $uv$.
Then take a vertex $v_1$ distinct from $u$ in $T$.
Then there exists a path $P=v_1 \cdots v_{t} u$ in $T$ and each edge in $P$ is a chord in $C$.
Therefore each edge in $P$ is a cared edge by Lemma~\ref{lem:char-cycle}(4).
Let $y$ be a vertex taking care of $uv_t$.
Then $y \in V(D) - V(H)$ by
Lemma~\ref{lem:char-cycle}(4).
Since
$w$ is the only out-neighbor in $V(D)- V(H)$ of $u$,
$w=y$ and so $w$
is an out-neighbor of $v_t$.
If $t\geq 2$, then, by applying a similar argument for the chord $v_tv_{t-1}$,
$w$ takes care of $v_tv_{t-1}$ and
 $w$ is the only out-neighbor in $V(D)- V(H)$ of $v_t$ by parts (1) and (2). 
 Therefore $w$ is an out-neighbor of $v_{t-1}$ if $t \geq 2$.
 We repeat this process until we conclude that $w$ is an out-neighbor of $v_1$.
 Therefore part (3) is true.
\end{proof}

\begin{Cor} \label{cor:non-chord-2-out}
Let $H$ be a hole with length $l \geq 5$ in the underlying graph of an $(i,2)$ digraph $D$ and $C$ be the cycle obtained from $H$. Then each vertex on $C$ having two out-neighbors in $V(H)$ is not incident to any chord of $C$.
\end{Cor}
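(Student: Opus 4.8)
The plan is to argue by contradiction, exploiting the outdegree bound of an $(i,2)$ digraph together with Proposition~\ref{prop:chord_property}. Suppose, to the contrary, that some vertex $u$ on $C$ has two out-neighbors in $V(H)$ and is nevertheless incident to a chord $uv$ of $C$ in $P(D)$. The goal is to produce a third distinct out-neighbor of $u$, forcing the outdegree of $u$ to exceed $2$.

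First I would invoke Proposition~\ref{prop:chord_property}(1) to obtain the unique vertex $w$ taking care of the chord $uv$, so that $w$ is a common out-neighbor of $u$ and $v$ in $D$. The crucial point is that, by Lemma~\ref{lem:char-cycle}(4), this caring vertex $w$ lies in $V(D) - V(H)$; this guarantees that $w$ is distinct from the two out-neighbors of $u$ that lie in $V(H)$. Consequently, the two vertices of $V(H)$ together with $w \in V(D) - V(H)$ are three pairwise distinct out-neighbors of $u$, so $u$ has outdegree at least $3$ in $D$. This contradicts the hypothesis that $D$ is an $(i,2)$ digraph, in which every vertex has outdegree at most $2$. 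Hence no vertex of $C$ having two out-neighbors in $V(H)$ can be incident to a chord, which is the assertion of the corollary.

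The argument is short, and I do not anticipate a genuine obstacle beyond one point that must be stated carefully: one has to be sure the caring vertex $w$ really lies outside $V(H)$, since otherwise it could coincide with one of the two out-neighbors of $u$ in $V(H)$ and the counting to outdegree $3$ would collapse. This is exactly what Lemma~\ref{lem:char-cycle}(4) provides, so the whole step reduces to correctly combining that fact with Proposition~\ref{prop:chord_property} and the outdegree cap. An equivalent and perhaps cleaner phrasing I might use instead is to observe that, since $u$ already has two out-neighbors inside $V(H)$ and its outdegree is at most $2$, it can have no out-neighbor in $V(D) - V(H)$ at all; a chord incident to $u$ would require precisely such an external out-neighbor, and this immediate incompatibility finishes the proof.
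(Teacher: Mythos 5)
Your proof is correct and follows essentially the same route as the paper: the paper likewise assumes a chord $uv$ at such a vertex $u$, obtains via Proposition~\ref{prop:chord_property} a caring vertex $w \in V(D)-V(H)$ that is a common out-neighbor of $u$ and $v$, and concludes that $u$ would have three out-neighbors, contradicting the outdegree bound of an $(i,2)$ digraph. Your explicit remark that $w \notin V(H)$ is what makes the counting to three valid is exactly the point the paper's argument rests on.
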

\begin{proof}
Let $u$ be a vertex on $C$ having two out-neighbors in $V(H)$.
Suppose, to the contrary, that $u$ is incident a chord $uv$ of $C$.
Then there exists a vertex $w$ in $V(D)-V(H)$ 
such that $w$ is a common out-neighbor of $u$ and $v$ by Proposition~\ref{prop:chord_property}(2).
Thus $u$ has at least three out-neighbors, which contradicts the fact that $D$ is an $(i,2)$ digraph.
\end{proof}

To prove one of our main theorems, we need one more result.
\begin{Lem} [\cite{eoh2021chordal}]\label{lem:induced-path-hole}
Given a graph, $G$, and a cycle, $C$, of $G$ with length at least four, suppose
that a section, $Q$, of $C$ forms an induced path of $G$ and contains a path, $P$, with length at
least two, none of whose internal vertices is incident to a chord of $C$ in $G$. Then $P$ can
be extended to a hole $H$ in $G$ so that $V(P) \subsetneq V(H)  \subseteq V(C)$ and $H$ contains a vertex on $C$ that is not on $Q$.
\end{Lem}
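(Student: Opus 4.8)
The plan is to construct the desired hole explicitly by grafting $P$ onto a shortest ``detour'' through the remaining vertices of $C$, and then to exploit the hypothesis that $Q$ is induced in order to force that detour to leave $Q$.

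First I would fix notation. Write $C=v_0v_1\cdots v_{n-1}v_0$ and, after relabelling, assume $P=v_0v_1\cdots v_k$ with $k\ge 2$, so that its internal vertices are exactly $v_1,\dots,v_{k-1}$, none of which is incident to a chord of $C$. Let $A$ be the complementary arc $v_kv_{k+1}\cdots v_{n-1}v_0$ of $C$, so that $V(A)=V(C)\setminus\{v_1,\dots,v_{k-1}\}$ and $C=P\cup A$, with $P$ and $A$ meeting exactly in the endpoints $v_0,v_k$. Since consecutive vertices of $A$ are adjacent in $G$, the arc $A$ is a $v_0$--$v_k$ path inside $G[V(A)]$.

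Second, I would build the hole. Let $B$ be a \emph{shortest} $v_0$--$v_k$ path in $G[V(A)]$ and set $H:=P\cup B$. To see $H$ is a hole I would argue as follows. Because $v_0$ and $v_k$ are nonconsecutive vertices of the induced path $Q$ (they are at distance $k\ge 2$ on $Q$), we have $v_0v_k\notin E(G)$; hence $B$ has an internal vertex and $H$ has at least four vertices, with $V(P)\subsetneq V(H)\subseteq V(C)$. For chordlessness I would run a short audit: $B$, being a shortest path, is induced, so it has no chord; each internal vertex of $P$ is incident to no chord of $C$, so (as $V(H)\subseteq V(C)$) its only neighbours in $V(H)$ are its two $C$-neighbours, which are consecutive to it on $H$; and the shared endpoints $v_0,v_k$ have no further neighbour on $B$ (shortest path) and are nonadjacent to each other. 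Thus every edge of $G$ inside $V(H)$ is an edge of the cycle $H$, so $H$ is chordless.

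Third --- and this is the crux --- I would show $H$ meets $V(C)\setminus V(Q)$. Here the induced-path hypothesis on $Q$ does the work. Observe that $V(A)\cap V(Q)=V(Q)\setminus\{v_1,\dots,v_{k-1}\}$ splits along $C$ into the two arcs of $Q$ lying on either side of the interior of $P$. Since $Q$ is chordless, its only edges join consecutive vertices, so deleting $v_1,\dots,v_{k-1}$ disconnects $v_0$ from $v_k$ in $G[V(Q)\setminus\{v_1,\dots,v_{k-1}\}]$. Consequently no $v_0$--$v_k$ path can remain inside $Q$; in particular $B$ must use a vertex of $V(A)\setminus V(Q)$, that is, a vertex of $C$ not on $Q$, and this vertex lies on $H$.

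I expect the main obstacle to be precisely this last step. Producing \emph{some} hole through $P$ is routine once one takes a shortest detour, but guaranteeing that the hole reaches \emph{beyond} $Q$ requires pinning down why a shortcut cannot stay inside $Q$; the clean reason is the connectivity argument above, driven entirely by the chordlessness of $Q$. A secondary point needing care is the full chord audit of $H$, especially at the shared endpoints $v_0,v_k$, where one must invoke both that $B$ is a shortest path and that $v_0v_k\notin E(G)$.
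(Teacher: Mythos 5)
Your proposal is correct, but note that there is nothing in this paper to compare it against: Lemma~\ref{lem:induced-path-hole} is quoted from \cite{eoh2021chordal} and used as a black box, with no proof reproduced here. Judged on its own, your argument is complete. The detour path $B$ is well defined because the complementary arc $A$ is itself a $v_0$--$v_k$ path in $G[V(A)]$; since $V(B)\subseteq V(A)$, every $G$-edge between vertices of $B$ is an edge of $G[V(A)]$, so the shortest-path (hence induced) property of $B$ legitimately rules out chords of $H$ within $B$ and from $v_0,v_k$ into the interior of $B$. The observation that $v_0$ and $v_k$ are nonconsecutive on the induced path $Q$ (as $P$ has length $k\geq 2$) gives $v_0v_k\notin E(G)$, which simultaneously forces $B$ to have an internal vertex, yields $|V(H)|\geq 4$, and gives the proper containment $V(P)\subsetneq V(H)$; the no-chords-of-$C$ hypothesis on the internal vertices of $P$ handles all remaining potential chords, since $V(H)\subseteq V(C)$ means any chord of $H$ is an edge of $C$ or a chord of $C$. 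Finally, your separation argument for the crux is sound: in $G\left[V(Q)\setminus\{v_1,\ldots,v_{k-1}\}\right]$ the chordlessness of $Q$ leaves only the consecutive edges of the two subarcs of $Q$ flanking the interior of $P$, so $v_0$ and $v_k$ lie in distinct components, and hence $B$ must pass through a vertex of $V(A)\setminus V(Q)$, which is the required vertex of $C$ off $Q$ lying on $H$. This shortest-detour construction is a standard and clean way to prove the lemma (an equally common variant takes a minimum-length cycle through $P$ with vertices in $V(C)$ and argues it is chordless); either way, your write-up correctly identifies that the only nontrivial content is the escape from $Q$, and your connectivity argument settles it.
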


Given a vertex subset $X$ of a graph $G$,
a {\it maximum clique in $X$} means a clique in $G[X]$ whose size is the maximum among the cliques in $G[X]$.


\begin{Thm}\label{thm:maximumclique}
Let $D$ be an $(i,2)$ digraph and $C$ be the cycle of length $l\geq 4$ in $P(D)$ obtained from a hole in $U(D)$.
If a maximum clique in $V(C)$ has size at most 
$\lfloor\frac{ l -1 }{2}\rfloor$,
 then the subgraph of $P(D)$ induced by $V(C)$ has a hole.
\end{Thm}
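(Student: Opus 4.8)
The plan is to argue by contraposition: assuming that the subgraph $G := P(D)[V(C)]$ has no hole (that is, $G$ is chordal), I will exhibit a clique of $G$ of size $\lceil \frac{l}{2}\rceil = \lfloor \frac{l-1}{2}\rfloor + 1$, contradicting the hypothesis that a maximum clique in $V(C)$ has size at most $\lfloor \frac{l-1}{2}\rfloor$. First note that $C$ must have a chord in $P(D)$, since otherwise $C$ itself would be a hole of length $l \geq 4$. Hence, by Proposition~\ref{prop:chord_property}(3), the chords of $C$ split into components each of whose vertex sets is a clique in $P(D)$; I fix one such component and call its vertex set $W$, so that $W$ is a clique with $|W| \geq 2$.

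The heart of the argument is the following density claim: \emph{every arc of $C$ between two consecutive vertices of $W$ has length at most $2$.} The vertices of $W$ partition $C$ into $|W|$ arcs, whose interior vertices lie outside $W$; suppose for contradiction that one such arc $A$, running between consecutive vertices $a,a' \in W$, has length at least $3$, and let $x$ be the interior vertex of $A$ adjacent to $a$ on $C$. I would first observe that inside $G[V(A)]$ the only neighbors of $a$ are $x$ and $a'$: the edge $ax$ is a cycle edge and the edge $aa'$ is present because $W$ is a clique, while any further edge from $a$ to an interior vertex $y$ of $A$ would be a chord and, since $a \in W$, would force $y$ into the chord-component $W$, contradicting that $a,a'$ are consecutive in $W$. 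The same reasoning gives $xa' \notin E(G)$. Thus $a$ has degree two in $G[V(A)]$ with non-adjacent neighbors $x$ and $a'$.

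From here a hole is forced. Since $x$ and $a'$ are joined by the sub-arc of $A$ avoiding $a$, I take a shortest $x$--$a'$ path $P$ in $G[V(A)] - a$; it has length at least two (as $xa' \notin E(G)$) and, being shortest, is induced, while $a$ is adjacent only to the two endpoints of $P$. Hence $P$ together with $a$ is a chordless cycle of length at least $4$, i.e., a hole of $G[V(A)] \subseteq G$, contradicting chordality. Therefore every arc of $W$ has length at most $2$; since the $|W|$ arcs partition the $l$ edges of $C$, this yields $l \leq 2|W|$, so $|W| \geq \lceil \frac{l}{2}\rceil$. As $W$ is a clique of $G$ contained in $V(C)$, a maximum clique in $V(C)$ has size at least $\lceil \frac{l}{2}\rceil = \lfloor \frac{l-1}{2}\rfloor + 1$, the desired contradiction.

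The main obstacle is the density claim, and the decisive observation is that the clique-component hypothesis of Proposition~\ref{prop:chord_property} forbids any chord from $a$ or $a'$ into the interior of the arc $A$; this is exactly what pins $a$ to degree two with non-adjacent neighbors and thereby forces a hole. The one subtlety I expect to verify carefully is that \emph{nested} chords—chords of other components whose endpoints lie among the interior vertices of $A$—cannot restore chordality, precisely because such chords can never attach to $a$ or $a'$. The shortest-path construction accommodates this automatically, since it uses only that $a$ has degree two and that the interior of $A$ is connected.
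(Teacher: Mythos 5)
Your proof is correct, and it takes a genuinely different route from the paper's. The paper argues directly: it uses the clique bound at the outset to locate two consecutive vertices of $C$ outside a maximum clique $K$, builds from them a section $Q$ of $C$ whose only vertices in $K$ are its endpoints, shows those endpoints are joined by a chord, and then, after a case analysis on the chord-component containing that chord (and, when that component is larger than a single edge, on the length of the complementary section), produces an induced path of length two whose middle vertex avoids all chords of a suitable sub-cycle and invokes Lemma~\ref{lem:induced-path-hole} to extend it to a hole. You instead run a contradiction through a density count: assuming $P(D)[V(C)]$ is hole-free, you fix one component $W$ of the chord graph, which is a clique by Proposition~\ref{prop:chord_property}(3), and show every arc of $C$ between consecutive vertices of $W$ has length at most two --- since on a longer arc $A$ the endpoint $a$ would have exactly the two non-adjacent neighbors $x$ and $a'$ in $G[V(A)]$ (any chord at $a$ stays inside $W$, so none enters the interior of $A$), and then $a$ together with a shortest $x$--$a'$ path in $G[V(A)]-a$ would be a hole. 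Counting the arcs gives $|W| \geq \lceil l/2 \rceil = \lfloor \frac{l-1}{2}\rfloor + 1$, beating the clique bound. Your route avoids both the case analysis and the imported Lemma~\ref{lem:induced-path-hole}, replacing them with the elementary shortest-path hole construction, and it yields a slightly stronger structural fact (hole-freeness forces \emph{every} chord-component to span at least $\lceil l/2 \rceil$ vertices of $C$); the paper's route, by contrast, keeps everything anchored to a maximum clique and to Lemma~\ref{lem:induced-path-hole}, the same tool it reuses for Theorem~\ref{thm:determinant-hole}. One small point you should make explicit: Proposition~\ref{prop:chord_property} is stated for cycles obtained from holes of length at least $5$, so you need to note that the hole $H$ underlying $C$ satisfies $|V(H)| = l + |\Gamma_H| \geq 4 + 1 = 5$; this is automatic, and the paper performs exactly this check before invoking Proposition~\ref{prop:chord_property}(3) in its own proof.
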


\begin{proof}
Suppose that a maximum clique $K$ in $V(C)$ has size at most $\lfloor\frac{ l -1 }{2}\rfloor$.
If $C$ has no chord, then $C$ is a hole and so we are done.
Suppose that $C$ has a chord.
If a maximum clique has size at least three in $V(C)$, then the clique must contain a chord of $C$.
Otherwise, each chord is a maximum clique itself.
Therefore
we may assume that
$\{u,v\} \subseteq V(K)$ for a chord $uv$ of $C$.
We note that
\[ |V(C)| - |V(K)| \geq  l - \left\lfloor\frac{ l -1 }{2}\right \rfloor = \left\lceil\frac{ l+1 }{2}\right \rceil > \frac{l}{2}.
\]
Therefore 
there exist two consecutive vertices $x_1$ and $x_2$ on $C$ each of which does not belong to $V(K)$.
Starting from $x_1$ (resp.\ $x_2$), we traverse the $(x_1,x_2)$-section (resp.\ the $(x_2,x_1)$-section) of $C$ that is not the edge $x_1x_2$ until we first meet a vertex $y$ (resp.\ $z$) belonging to $V(K)$.
Then the $(y,x_1)$-section obtained in this way, the edge $x_1x_2$, and the $(x_2,z)$-section obtained in this way form the $(y,z)$-section $Q$ of $C$ such that $y$ and $z$ are the only vertices belonging to $V(K)$. 
Since $x_1$ and $x_2$ are contained in $Q$,
$Q$ has length at least $3$.
Let $Q=v_0v_1v_2\cdots v_t$ where $v_0=y$ and $v_t=z$ for an integer $t\geq 3$.
Then $v_i \notin V(K)$ for each $1\leq i\leq t-1$.
If $v_0=v_t$, then $V(Q)=V(C)$ and so $V(K) \cap V(C)=\{v_0\}$, which contradicts that the existence of the chord $uv$.
Therefore \[v_0 \neq v_t.\]
Since $\{v_0,v_t\} \subseteq V(K)$, $v_0v_t$ is an edge in $P(D)$.
If $v_0v_t$ is not a chord of $C$, then $C= v_0v_1\cdots v_tv_0$ and so, by the choice of $Q$, $V(K)$ does not contain any chord, which is a contradiction.
Therefore $v_0v_t$ is a chord of $C$.

Let $T$ be the component containing $v_0v_t$ in the induced subgraph by the chords of $C$.
We note that
\begin{enumerate}
\item[($\star$)]
 any vertex in $T$ cannot be joined to a vertex on $C- T$ by a chord of $C$.
\end{enumerate}
Let $C_1$ be the cycle obtained from adding
$v_0v_t$ to $Q$. 
Suppose $V(T)=\{v_0,v_t\}$.
Then $C_1$ has length at least four and, by ($\star$), $P_1:=v_1v_0v_t$ is an induced path.
Since $V(T)=\{v_0,v_t\}$, $v_0$ is not
incident to any chord of $C$ except $v_0v_t$ and so $v_0$ is not incident to any chord of $C_1$.
Now we suppose
\[V(T) \neq \{v_0,v_t\}.\]
Then $|V(T)| \geq 3$.
Furthermore, since $l \geq 4$, 
the hole in $U(D)$ containing the vertices on $C$ has length at least $5$.
Therefore $V(T)$ forms a clique in $P(D)$ by Proposition~\ref{prop:chord_property}(3) and so
$|V(K)|\geq 3$ by the maximality.
By the choice of $Q$,
$K$ contains a vertex on the $(v_0,v_t)$-section, say $L$, of $C$ other than $Q$. 
Since $v_0v_t$ is a chord of $C$, $L$ has length at least two.

{\it Case 1}. $L$ has length $2$.
Let $w$ be the internal vertex on $L$.
Then $V(K)=\{v_0,w,v_t\}$ and so, by the maximality of $K$, $|V(T)|=3$.
Therefore $T=\{v_0,v_j,v_t\}$ for some $j\in \{1,\ldots,t-1\}$.
If $l\leq 6$,
$\lfloor\frac{ l -1 }{2}\rfloor < 3$, which contradicts the fact that $K$ has size $3$.
Therefore $l \geq 7$ and so \[ t \geq 5.\]
If $j=1$ or $2$, then, by ($\star$), 
$P_2:=v_tv_jv_{j+1}$ is an induced path and $v_j$ is not incident to any chord of $C_2$
where $C_2$ is the cycle
of length $t-j+1$ obtained from adding $v_jv_t$ to the $(v_j,v_t)$-section of $Q$.
If $j\geq 3$, then, by ($\star$), $P_3:=v_1v_0v_j$ is an induced path and $v_0$ is not incident to any chord of $C_3$ where $C_3$ is the cycle of length $j+1$ obtained from adding $v_0v_j$ to the $(v_0,v_j)$-section of $Q$.
We note that $t-j+1\geq 4$ if $j=1$ or $2$ and $j+1\geq 4$ for $j\geq 3$.
Therefore each of $C_2$ and $C_3$ has length at least $4$.

{\it Case 2}.  $L$ has length at least $3$.
Then, for each vertex $x$ on $K$, $v_0x$ or $v_tx$ is a chord of $C$.
Therefore $V(K) \subseteq V(T)$ and so, by the maximality, $V(K)=V(T)$.
Thus, by ($\star$),
$P_1=v_1v_0v_t$ is an induced path and $v_0$ is not incident to any chord of the cycle $C_1$.

For each $1\leq i \leq 3$, by applying Lemma~\ref{lem:induced-path-hole} to $P_i$ and $C_i$,
we may conclude that $P_i$ can be extended to a hole in $P(D)$ whose vertices are on $C_i$.
Since the cycles $C_1,C_2$, and $C_3$ are contained in $V(C)$, the subgraph of $P(D)$ induced by $V(C)$ has a hole and so the statement is true.
\end{proof}

\begin{Lem} \label{lem:chord-connectivity}
Let $G$ be a graph and $C$ be a cycle of $G$.
Suppose that there exists a maximum clique $K$ of size at least four in $V(C)$.
If $C$ has length at least five,
then, for each pair of vertices in $K$,
there is a path between them consisting of only chords of $C$.
\end{Lem}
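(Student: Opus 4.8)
The plan is to reduce the statement to a connectivity assertion about an auxiliary graph on $V(K)$. Let $m = |V(K)| \geq 4$ and form the graph $H$ whose vertex set is $V(K)$ and whose edges are exactly the chords of $C$ joining two vertices of $K$. Since $K$ is a clique of $G$, every pair of vertices of $K$ is an edge of $G$, and such an edge is either a chord of $C$ or an edge of $C$ (the latter precisely when the two endpoints are consecutive on $C$). Thus $H$ is obtained from the complete graph on $V(K)$ by deleting the set $F$ of those edges of $C$ both of whose endpoints lie in $K$, and the desired conclusion---that any two vertices of $K$ are joined by a path of chords of $C$---is exactly the assertion that $H$ is connected.

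Before attacking connectivity, I would record two structural facts about the deleted set $F$. First, $F \subseteq E(C)$, so every vertex has at most two incident edges in $F$; that is, $F$ has maximum degree at most $2$. Second, because $C$ is a single cycle of length at least $5$, the only cycle contained in $C$ as a subgraph is $C$ itself, so $F$ contains no $4$-cycle. These two facts, together with $m \geq 4$, are all that the argument needs.

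The core step is to show that $H = \overline{F}$ is connected by a short counting argument. Suppose not; then $V(K)$ splits into nonempty parts $A$ and $B$ with no edge of $H$ between them, which forces every one of the $|A|\,|B|$ cross pairs to be an edge of $F$. Since each vertex of $A$ meets at most two edges of $F$, the number of $F$-edges incident to $A$ is at most $2|A|$, whence $|A|\,|B| \leq 2|A|$ and so $|B| \leq 2$; symmetrically $|A| \leq 2$. Hence $m = |A| + |B| \leq 4$, which already settles every case with $m \geq 5$. The remaining boundary case $m = 4$ forces $|A| = |B| = 2$ with all four cross pairs in $F$, i.e.\ a copy of $K_{2,2}$---a $4$-cycle---inside $F$; this contradicts the fact that $F$ has no $4$-cycle. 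Therefore $H$ is connected.

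I expect the only real obstacle to be this boundary case $m = 4$: the counting bound degenerates to equality there, and it is exactly at this point that the hypothesis ``$C$ has length at least five'' is indispensable, since it is what rules out the offending $4$-cycle in $F$. I also note that the argument uses only that $K$ is a clique in $V(C)$, not its maximality.
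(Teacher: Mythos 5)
Your proof is correct, and it shares the paper's initial reduction but proves the key connectivity claim by a genuinely different argument. The paper also forms the auxiliary graph on $V(K)$ whose edges are the chords (there called $K^*$, your $H$), but then argues locally: among any four vertices of $K$, the edges of $C$ joining them form at most a path of length three (a fourth such edge would close a $4$-cycle inside $C$, impossible since $C$ has length at least five), and $K_4$ minus a subgraph of $P_4$ is connected (the complement of $P_4$ in $K_4$ is again a $P_4$); since every $4$-subset of $V(K^*)$ induces a connected subgraph, $K^*$ itself is connected. You instead argue globally by contradiction: a disconnection $(A,B)$ of $H$ forces all $|A|\,|B|$ cross pairs into the deleted set $F\subseteq E(C)$, the degree bound $\Delta(F)\le 2$ gives $|A|,|B|\le 2$, and the boundary case $|A|=|B|=2$ produces a $K_{2,2}\cong C_4$ inside $F$, which the length hypothesis rules out. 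The paper's route is shorter and nearly case-free once the $P_4$ observation is made; your route isolates exactly the two properties of $F$ that matter (maximum degree at most two and no $4$-cycle), makes transparent that the hypothesis $|V(K)|\ge 4$ and the length-five hypothesis enter only at the degenerate case $m=4$, and shows the conclusion holds for any set of deleted edges with those two properties. Your closing remark that maximality of $K$ is never used applies equally to the paper's proof.
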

\begin{proof}
Suppose that $C$ has length at least five.
Let $K^*$ be the graph obtained from $K$ by deleting edges of $C$ in $K$.
Since $|V(K^*)|=|V(K)|$,
there are at least four vertices in $V(K^*)$ and we take four vertices $v_1,v_2,v_3,v_4$ in $V(K^*)$. 
Let $T$ be the subgraph induced by $\{v_1,v_2,v_3,v_4\}$ in $K^*$.
Since $C$ has length at least five, at most three edges of $C$ were deleted from the clique on $\{v_1,v_2,v_3,v_4\}$ to obtain $T$.
Even if three edges were deleted, $T$ is isomorphic to a path of length $3$.
 Thus $T$ is connected.
 Since $v_1,v_2,v_3$, and $v_4$ were arbitrarily chosen from $V(K^*)$, we conclude that $K^*$ is connected and so the statement is true.
\end{proof}

\begin{Lem} \label{lem:maximum-size}
Let $H$ be a hole with length $l \geq 5$ in the underlying graph of an $(i,2)$ digraph $D$ and $C$ be the cycle in $P(D)$ obtained from $H$.
If $i \geq 3$,
then a maximum clique in $V(C)$ has size at most $i$ in $P(D)$.
\end{Lem}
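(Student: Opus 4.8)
The plan is to bound the size of a maximum clique $K$ in $V(C)$ by splitting on whether $|V(K)| \le 3$ or $|V(K)| \ge 4$, and to reduce the interesting case to the indegree bound imposed on a single caring vertex. If $|V(K)| \le 3$, then since $i \ge 3$ we immediately get $|V(K)| \le 3 \le i$, so there is nothing more to do; the hypothesis $i \ge 3$ is used precisely to absorb these small cliques, reflecting the fact that for $i=2$ a clique of size $3$ on $V(C)$ is genuinely possible. Hence the whole content lies in the case $|V(K)| \ge 4$.

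Assume $|V(K)| \ge 4$. Since $C$ has length $l \ge 5$, I would invoke Lemma~\ref{lem:chord-connectivity} to conclude that every pair of vertices of $K$ is joined by a path consisting only of chords of $C$. In particular every vertex of $K$ is an endpoint of a chord (the chord-path to any other clique vertex is nonempty), and all vertices of $K$ lie in a single component $T$ of the subgraph of $P(D)$ induced by the chords of $C$; that is, $V(K) \subseteq V(T)$. Fixing a chord $uv$ of $C$ with $u,v \in V(K)$, Proposition~\ref{prop:chord_property}(3) then supplies a vertex $w$ that is the common out-neighbor in $D$ of every vertex of $T$. Consequently each vertex of $V(K) \subseteq V(T)$ is an in-neighbor of $w$, so $V(K) \subseteq N_D^-(w)$. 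Because $D$ is an $(i,2)$ digraph, $w$ has indegree at most $i$, and therefore $|V(K)| \le |N_D^-(w)| \le i$, which is exactly the claim.

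The step I expect to be the crux is the passage from ``$K$ is a \emph{maximum} clique'' to ``all of $V(K)$ lies in one chord-component admitting a single common out-neighbor.'' This is delivered by combining Lemma~\ref{lem:chord-connectivity}, which requires both maximality and $|V(K)| \ge 4$, with Proposition~\ref{prop:chord_property}(3); neither ingredient alone suffices, since Lemma~\ref{lem:chord-connectivity} only links the clique vertices through chords while Proposition~\ref{prop:chord_property}(3) only produces a common out-neighbor for a given component. Once $w$ is identified, the outdegree-$2$, indegree-$\le i$ structure of an $(i,2)$ digraph closes the argument routinely, and no delicate case analysis on how the clique meets $C$ is needed. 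It is worth double-checking that $T$ indeed contains a chord (so that Proposition~\ref{prop:chord_property}(3) applies), which is guaranteed because each vertex of $K$ was shown to be incident to a chord.
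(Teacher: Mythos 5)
Your argument for the main case coincides with the paper's, but it rests on a false premise: you assert that $C$ has length $l \ge 5$. The hypothesis $l \ge 5$ is about the hole $H$ in $U(D)$, not about $C$. By Lemma~\ref{lem:char-cycle}(1), the cycle $C$ obtained from $H$ has length $l - |\Gamma_H|$ with $1 \le |\Gamma_H| \le \lfloor l/2 \rfloor$, so $C$ is strictly shorter than $H$ and can have length $4$ (indeed, for $l=5$ the cycle $C$ \emph{always} has length at most $4$). In that situation Lemma~\ref{lem:chord-connectivity} simply does not apply --- its hypothesis requires the cycle itself to have length at least five --- so your reduction to a single chord-component with a common out-neighbor breaks down exactly when $|V(K)| \ge 4$ and $C$ has length $4$, i.e.\ when $V(C)=V(K)$ is a $4$-clique.

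The paper closes this residual case with a separate argument that you are missing: if $C$ has length $4$ and $V(C)=V(K)$, then every vertex of $C$ is incident to a chord of $C$ (the two diagonals); but since $|\Gamma_H| \ge 1$, Lemma~\ref{lem:char-cycle}(3) guarantees a vertex of $C$ with two out-neighbors in $V(H)$, and Corollary~\ref{cor:non-chord-2-out} says such a vertex is incident to \emph{no} chord of $C$ --- a contradiction. (Note also that the paper phrases the whole proof as a contradiction argument starting from a clique of size at least $i+1$, which immediately gives $|V(K)| \ge 4$; your split into $|V(K)| \le 3$ versus $|V(K)| \ge 4$ is equivalent.) So your proof is not wrong in spirit --- for cycles $C$ of length at least five it is exactly the paper's argument --- but as written it omits a case that genuinely occurs and requires a different tool.
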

\begin{proof}
To the contrary, we suppose that $i \geq 3$ and there exists a clique $K$ in $V(C)$ of size at least $i+1$.
Then $|V(K)| \geq 4$.
We first assume that $C$ has length at least five.
Then, by Lemma~\ref{lem:chord-connectivity},
for each pair of vertices in $K$, there is a path between them consisting of only chords of $C$.
Therefore, by Proposition~\ref{prop:chord_property}(3),
there exists a common out-neighbor $w$ of the vertices in $K$.
Thus $w$ has indegree at least $i+1$, which contradicts the fact that $D$ is an $(i,2)$ digraph.
Hence $C$ has length at most $4$.
Then, since $|V(K)| \geq 4$, $C$ has length $4$ and so $|V(C)|=|V(K)|=4$.
Thus each vertex on $C$ is incident to a chord.
However, since $|\Gamma_H|\geq 1$, 
there is a vertex on $C$ having two out-neighbors in $V(H)$ by Lemma~\ref{lem:char-cycle}(3). Thus the vertex on $C$ is not incident to any chord by Corollary~\ref{cor:non-chord-2-out} and so we reach a contradiction.
Hence the statement is true.
\end{proof}

Given two sets $A$ and $B$, we write $A \sqcup B$ to mean $A\cup B$ where $A\cap B=\emptyset$.
\begin{Thm} \label{thm:determinant-hole}
Let $H$ be a hole with length $l \geq 5$ in the underlying graph of an $(i,2)$ digraph $D$ with $i \geq 3$ and $C$ be the cycle in $P(D)$ of length at least four obtained from $H$ by $\Gamma_H$.
If
\[
l-|\Gamma_H| \geq 2i+1 \quad \text{or} \quad  l < 3 |\Gamma_H|, \]
then the subgraph of the phylogeny graph of $D$ induced by $V(C)$ has a hole.
\end{Thm}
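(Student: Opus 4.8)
The plan is to match each of the two hypotheses to one of the two tools already in place: Theorem~\ref{thm:maximumclique} for the first and Lemma~\ref{lem:induced-path-hole} for the second. Put $k=|\Gamma_H|$, so $C$ has length $l':=l-k\ge 4$, and let $\omega$ be the size of a maximum clique in $P(D)[V(C)]$. As $i\ge 3$ and $l\ge 5$, Lemma~\ref{lem:maximum-size} gives $\omega\le i$. By Lemma~\ref{lem:char-cycle}(3) exactly $k$ vertices of $C$ have two out-neighbors in $V(H)$, and by Corollary~\ref{cor:non-chord-2-out} none of these is incident to a chord of $C$; I will call a vertex of $C$ \emph{chord-free} if it lies on no chord of $C$, so at least $k$ of the $l'$ vertices of $C$ are chord-free.

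Under the first hypothesis $l-|\Gamma_H|\ge 2i+1$, that is $l'\ge 2i+1$, I would note $\lfloor (l'-1)/2\rfloor\ge i\ge\omega$, so a maximum clique in $V(C)$ has size at most $\lfloor (l'-1)/2\rfloor$; Theorem~\ref{thm:maximumclique} then hands back a hole in $P(D)[V(C)]$.

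Under the second hypothesis $l<3k$ I would argue through the chord-free vertices instead. Here $l'=l-k<2k$, so $k>l'/2$ and strictly more than $l'/2$ of the vertices of $C$ are chord-free. A maximum independent set of the cycle $C$ has only $\lfloor l'/2\rfloor$ vertices, so the chord-free vertices cannot be pairwise nonconsecutive; I would pick two consecutive chord-free vertices $x,y$ and let $z$ be the neighbor of $y$ on $C$ other than $x$. Since $x$ is chord-free, $xz$ is not a chord, whence the section $Q=xyz$ of $C$ is an induced path of length two whose only internal vertex $y$ is chord-free. Applying Lemma~\ref{lem:induced-path-hole} with $P=Q$ extends $Q$ to a hole inside $V(C)$, completing this case.

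I expect the only real choice point to be the handling of the second hypothesis. Trying to route it through Theorem~\ref{thm:maximumclique} by bounding $\omega$ is unpleasant, because $\omega$ may be $2$ or $3$ while $l'$ is small, so the bound $\lfloor(l'-1)/2\rfloor$ need not exceed $\omega$; the clean observation is that $l<3k$ forces a strict majority of $C$ to be chord-free, which is exactly what yields two consecutive chord-free vertices. The remaining work is routine verification: that $Q$ is induced (using that $x$, not just $y$, is chord-free, so that the distance-two pair $xz$ is not joined by a chord) and that $y$ satisfies the internal-vertex condition of Lemma~\ref{lem:induced-path-hole}.
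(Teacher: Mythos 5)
Your proposal is correct and takes essentially the same route as the paper: the first hypothesis is handled by combining Lemma~\ref{lem:maximum-size} (maximum clique size at most $i$) with Theorem~\ref{thm:maximumclique}, and the second by producing two consecutive chord-free vertices on $C$ and applying Corollary~\ref{cor:non-chord-2-out} together with Lemma~\ref{lem:induced-path-hole} to the resulting induced path of length two. The paper phrases the majority argument as $|B|<|A|$ for the partition of $V(C)$ into vertices with one versus two out-neighbors in $V(H)$, which is exactly your ``more than $l'/2$ chord-free vertices'' count.
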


\begin{proof}
By the definition of $C$, $|V(C)| = l-|\Gamma_H|$.
Since $i \geq 3 $ by the assumption, each maximum clique in $V(C)$ has size at most $i$ by Lemma~\ref{lem:maximum-size}.

We first suppose $l-|\Gamma_H| \geq 2i+1$.
Then $\frac{|V(C)|-1}{2} \geq i$.
Since $|V(C)| \leq l-1$ by Lemma~\ref{lem:char-cycle}(1),
\[\frac{|V(C)|-1}{2}  \leq \frac{l-2}{2} \leq \left\lfloor \frac{l-1}{2} \right\rfloor.
\]
Therefore
\[\left\lfloor \frac{l-1}{2} \right\rfloor \geq i \]
and so, by Theorem~\ref{thm:maximumclique},  the statement is true.

Now we suppose
\begin{equation} \label{eq:determinant-1}
l < 3 |\Gamma_H|.
\end{equation}
Let \[A=\{v \in V(C) \mid |N^+(v) \cap V(H) | =2 \} \] and\[ B=\{v \in V(C) \mid  |N^+(v) \cap V(H) | =1\}.\]
Take a vertex $v$ on $C$.
Then, by Lemma~\ref{lem:char-cycle}(2),
$N^+(v) \cap V(H) \neq \emptyset$.
Since $D$ is an $(i,2)$ digraph, $|N^+(v) \cap V(H) |\leq 2 $ and so $v \in A \cup B$.
Therefore $V(C) = A\sqcup B$.
Then, since $|A|=|\Gamma_H|$ by Lemma~\ref{lem:char-cycle}(3),
$|B|=|V(C)|-|A|=l-2|\Gamma_H|$.
Thus $|B| < |A|$ by~\eqref{eq:determinant-1}.
Hence
there exist two consecutive vertices $u$ and $v$ on $C$
that belong to $A$.
We take the section $Q:=uvw$ of $C$.
Since $\{u, v\} \subseteq A$, neither $u$ nor $v$ is incident to any chord of $C$ by Corollary~\ref{cor:non-chord-2-out}.
Therefore $Q$ is an induced path of length two.
Thus $Q$ can be extended to a hole in $P(D)$ whose vertices are on $C$ by Lemma~\ref{lem:induced-path-hole}.
Thus the statement is true.
\end{proof}

In preparation for the proof of Theorem~\ref{thm:main}, we need the notion of perfect elimination orderings.
 A {\it perfect elimination ordering} in a graph is an ordering of the vertices of the graph such that, for each vertex $v$, $v$ and the neighbors of $v$ that come after $v$ in the order form a clique.
  It is well-known that a graph is chordal if and only if it has a perfect elimination ordering.
  A {\it simplicial vertex} is one whose neighbors form a clique.
  
  From now on, we write $u \to v$ (resp.\ $u \not \to v$) to represent ``$(u,v)$ is (resp.\ is not) an arc of a digraph".
\begin{proof}[Proof of Theorem~\ref{thm:main}]
Suppose $l \geq 3i+1$.
If $i=2$, then the statement is true by Theorem~\ref{thm:(2,2)-hole}.
Now we assume $i\geq 3$.
Let $C$ be the cycle obtained from $H$ by $\Gamma_H$.
Since $i \geq 3$,  $\frac{l}{2}\geq \frac{3i+1}{2} \geq 5$.
Then, since $|V(C)|\geq  l-\lfloor \frac{l}{2} \rfloor \geq \frac{l}{2} $ by Lemma~\ref{lem:char-cycle}(1), $C$ has length at least five.
To the contrary, suppose the subgraph of $P(D)$ induced by $V(H)$ does not have a hole. Then by Theorem~\ref{thm:determinant-hole}, \[
l-|\Gamma_H| \leq 2i \quad \text{and} \quad l \geq 3 |\Gamma_H|. \]
Then $3|\Gamma_H|-|\Gamma_H|\leq l-|\Gamma_H|\leq 2i $ and so we obtain
$|\Gamma_H| \leq i$.
Since $l \geq 3i+1$, $l- |\Gamma_H| \geq 2i+1$, a contradiction.
Therefore the subgraph of $P(D)$ induced by $V(H)$ has a hole and so the statement is true.

To show the ``further" part,
we consider an $(i,2)$ digraph $D$ with the vertex set \[V(D)=\{u,v_{0,1},v_{0,2},v_{0,3},v_{1,1},v_{1,2},v_{1,3},\ldots,
 v_{i-1,1},v_{i-1,2},v_{i-1,3}\}\]
 and the arc set
 \[
 A(D)=\{(v_{j,1},v_{j,2}),(v_{j,2},v_{j,3}),(v_{j,1},v_{j-1,3}),(v_{j,2},u)\mid 0\leq j \leq i-1 \}\]
 (each subscript of the vertices in $D$ is reduced to modulo $i$ and see the digraph given in Figure~\ref{fg:digraph_remark} for an illustration).
 Then $V(D) - \{u\}$ forms a hole $H$ of length $3i$ in $U(D)$.
 Since $v_{j-1,2}\to v_{j-1,3}$ and $v_{j,1} \to v_{j-1,3}$ for each $0\leq j \leq i-1$, $C=v_{0,1}v_{0,2}v_{1,1}v_{1,2}\cdots v_{i-1,1}v_{i-1,2}v_{0,1}$ is the cycle
obtained from $H$ in $P(D)$.
Since $u$ is a common out-neighbor of any pair in $K:=\{v_{j,2}\mid 0\leq j \leq i-1\}$, $K$ forms a clique in $P(D)$.
We can check that
for each $0 \leq j \leq i-1$, in $P(D)$,
 \begin{align*}
&N(u)=K,\quad N(v_{j,1})=\{v_{j-1,3},v_{j-1,2},v_{j,2}\},
\\&N(v_{j,2})=\{u,v_{j,1},v_{j,3},v_{j+1,1}\}\sqcup K,\quad \text{and} \quad N(v_{j,3})=\{v_{j,2},v_{j+1,1}\}.
 \end{align*}
 We note that, for each $0\leq j \leq i-1$, $v_{j,3}$ is a simplicial vertex in $P(D)-u$ and $v_{j,1}$ is a  simplicial vertex in $P(D) - \{u,v_{0,3},v_{1,3},\ldots,v_{i-1,3}\}$.
Therefore \[u,v_{0,3},v_{1,3},\ldots,v_{i-1,3},v_{0,1},v_{1,1},\ldots,v_{i-1,1},v_{0,2},v_{1,2},\ldots,v_{i-1,2}\]
is a perfect elimination ordering and so $P(D)$ is chordal.
Then, since $|V(H)|=3i$,
we conclude that the desired bound $3i+1$ is achieved by $D$.
\end{proof}
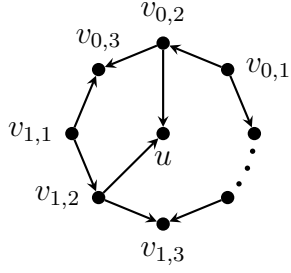
\begin{figure}
\begin{center}
  \begin{tikzpicture}[auto,thick,scale=0.8]
    \tikzstyle{player}=[minimum size=5pt,inner sep=0pt,outer sep=0pt,fill,color=black, circle]
    \tikzstyle{player2}=[minimum size=2pt,inner sep=0pt,outer sep=0pt,fill,color=black, circle]
    \tikzstyle{source}=[minimum size=5pt,inner sep=0pt,outer sep=0pt,ball color=black, circle]
    \tikzstyle{arc}=[minimum size=5pt,inner sep=1pt,outer sep=1pt, font=\footnotesize]
    \path (45:1.5cm)   node [player]  (v_{0,1}) [label=right:$v_{0,1}$] {};
    \path (90:1.5cm)     node [player]  (v_{0,2})[label=above:$v_{0,2}$] {};
    \path (135:1.5cm)   node [player]  (v_{0,3})[label=above:$v_{0,3}$]  {};
    \path (180:1.5cm)   node [player]  (v_{1,1})[label=left:$v_{1,1}$ ]  {};
    \path (225:1.5cm)   node [player]  (v_{1,2})[label=left:$v_{1,2}$ ]  {};
    \path (270:1.5cm)   node [player]  (v_{1,3})[label=below:$v_{1,3}$ ]  {};
    \path (0:0cm)   node  [player]   (u)[label=below:$u$ ]  {};
    \path (315:1.5cm)   node [player]  (dot1)[label=right:$$] {};
    \path (326.25:1.5cm)   node [player2]  (dot1_1)[label=right:$$] {};
    \path (337.5:1.5cm)   node [player2]  (dot2)[label=right:$$] {};
    \path (348.75:1.5cm)   node [player2]  (dot2_1)[label=right:$$] {};
    \path (360:1.5cm)   node [player]  (dot3)[label=right:$$] {};

  \draw[black,thick,-stealth] (v_{0,1}) to (v_{0,2});
   \draw[black,thick,-stealth] (v_{0,2}) to (v_{0,3});
   \draw[black,thick,-stealth] (v_{0,2}) to (u);
   \draw[black,thick,-stealth] (v_{1,1}) to (v_{0,3});
    \draw[black,thick,-stealth] (v_{1,1}) to (v_{1,2});
    \draw[black,thick,-stealth] (v_{1,2}) to (v_{1,3});
    \draw[black,thick,-stealth] (v_{1,2}) to (u);
    \draw[black,thick,-stealth] (v_{0,1}) to (dot3);
    \draw[black,thick,-stealth] (dot1) to (v_{1,3});
    \end{tikzpicture}
    \end{center}
\caption{The digraph $D$ in the proof for the ``further" part of Theorem~\ref{thm:main}}
\label{fg:digraph_remark}
 \end{figure}
\section{Forbidden subgraphs for phylogeny graphs of degree bounded digraphs}
\begin{Prop} \label{prop:bounded-clique}
Let $D$ be an $(i,j)$ digraph and $N$ be a subset of the neighbors of some vertex in $P(D)$.
If any $k$ vertices in $N$ do not form a clique in $P(D)$ for some positive $k$, then $|N|\leq (k-1)(j+1)$.

\end{Prop}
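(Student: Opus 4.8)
The plan is to cover $N$ by at most $j+1$ cliques of $P(D)$ and then invoke the hypothesis that $N$ contains no clique on $k$ vertices, which forces each of these cliques to have at most $k-1$ vertices; multiplying the two bounds yields the claim.

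Let $x$ be a vertex of $P(D)$ with $N\subseteq N(x)$, and let $a_1,\ldots,a_p$ be the out-neighbors of $x$ in $D$. Since $D$ is an $(i,j)$ digraph, $p\le j$. First I would record why each $v\in N$ is adjacent to $x$: by the definition of $P(D)$, at least one of $v\to x$, $x\to v$, or ``$x$ and $v$ have a common out-neighbor'' holds. In the second case $v$ is itself one of $a_1,\ldots,a_p$, and in the third case the common out-neighbor, being an out-neighbor of $x$, is one of $a_1,\ldots,a_p$.

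Next I would build the cover. Let $G_0$ be the set of in-neighbors of $x$ lying in $N$; any two of them have $x$ as a common out-neighbor, so $G_0$ is a clique in $P(D)$. For each $t\in\{1,\ldots,p\}$, let $G_t$ consist of $a_t$ (when $a_t\in N$) together with every $v\in N$ with $v\to a_t$. Two vertices of $G_t$ that both point to $a_t$ compete at $a_t$, while $a_t$ is adjacent to each such $v$ via the arc $v\to a_t$; hence each $G_t$ is a clique. By the case analysis above, every $v\in N$ lies in some $G_t$, so $N=G_0\cup G_1\cup\cdots\cup G_p$.

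Finally, since no $k$ vertices of $N$ form a clique, every clique contained in $N$ has at most $k-1$ vertices; in particular $|G_t|\le k-1$ for each $t$. Therefore
\[
|N|\le\sum_{t=0}^{p}|G_t|\le (p+1)(k-1)\le (j+1)(k-1),
\]
as required. The step needing the most care is checking that the $G_t$ simultaneously cover $N$ and are cliques; the crux is that the out-neighbors of $x$ are pairwise non-adjacent in general, so rather than grouping them by themselves I bundle each $a_t$ with the vertices pointing to it. This set is a clique, it absorbs exactly the competition-neighbors of $x$ that are witnessed by $a_t$, and it keeps the number of cliques down to $p+1\le j+1$.
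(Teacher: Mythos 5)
Your proof is correct and takes essentially the same approach as the paper: your cliques are exactly the sets $\left(N^-(a_t)\cup\{a_t\}\right)\cap N$ and $N^-(x)\cap N$ that the paper bounds by $k-1$ via the observation that $N^-(v)\cup\{v\}$ is always a clique, and your count $(p+1)(k-1)\le (j+1)(k-1)$ reproduces the paper's summation. The only difference is bookkeeping: the paper counts each out-neighbor separately from its in-neighbors in $N$ (hence its $k-2$ versus $k-1$ case distinction), while you bundle each $a_t$ with the vertices pointing to it into a single clique, which is a cleaner packaging of the same argument.
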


\begin{proof}
Let $u$ be a vertex such that $N$ is a set of its neighbors in $P(D)$.
Suppose that any $k$ vertices in $N$ do not form a clique in $P(D)$ for some positive integer $k$.

By the definition of phylogeny graph,
\begin{equation}\label{eq:prop:bounded-clique1}
 N(u) = \left(\bigcup_{v \in N^+(u)} N^-(v) -\{u\}\right) \cup N^+(u) \cup N^-(u).
\end{equation}
Take a vertex $v$ in $D$.
Then $N^-(v)\cup\{v\}$ forms a clique in $P(D)$ and so $(N^-(v)\cup\{v\})\cap N$ is an empty set or forms a clique in $P(D)$.
Thus, by our assumption,
\begin{equation}\label{eq:prop:bounded-clique2}
|N^-(v) \cap N| \leq |(N^-(v)\cup\{v\}) \cap N| \leq k-1.\end{equation}
Furthermore, if $v\in N$, then
$|N^-(v) \cap N| < |(N^-(v)\cup\{v\}) \cap N|$ and so
\begin{equation}\label{eq:prop:bounded-clique3}
|N^-(v) \cap N| \leq k-2.\end{equation}
We note that $N(u) \cap N = N$ and $u \notin N$. 
Then, by \eqref{eq:prop:bounded-clique1}, \eqref{eq:prop:bounded-clique2}, and \eqref{eq:prop:bounded-clique3},
\begin{align*}
|N| &\leq
\sum_{{v \in N^+(u)\cap N}} |N^-(v)\cap N|
+\left(
\sum_{v \in N^+(u)- N} |N^-(v)\cap N|\right)
+
|N^+(u)\cap N| + |N^-(u) \cap N|
\\
& \leq (k-2) \cdot |N^+(u)\cap N|  + (k-1) \cdot |N^+(u)- N|
+ |N^+(u)\cap N| + (k-1) \\
& =
(k-1)(|N^+(u)\cap N| + |N^+(u)- N|+1)= (k-1) (|N^+(u)| +1)\leq (k-1)(j+1).\qedhere 
\end{align*}
\end{proof}

We say that a graph $G$ is {\it $(i,j)$ phylogeny-realizable through an $(i,j)$ digraph} if it is an induced subgraph of a phylogeny graph of an $(i,j)$ digraph. When no confusion is likely to arise, we simply say $G$ is $(i,j)$ realizable.

Let $G$ be a graph.
For vertices $x$ and $y$ in $G$,
the length of a shortest path from $x$ to $y$ is called the {\it distance} between $x$ and $y$ and denoted by $d_G(x,y)$.
A {\it center} is a vertex $u$ such that $\max\{ \text{$d_G(u, v)$} : v \in  V(G)\}$ is as small as possible.

\begin{Prop} \label{prop:forbidden_star}
If an $(i,j)$ phylogeny graph contains an induced subgraph $H$ isomorphic to $K_{1,l}$ for some positive integer $l$,
then $l\leq j+1$.
 Furthermore, $H\cong K_{1,j+1}$ is $(i,j)$ realizable. 
\end{Prop}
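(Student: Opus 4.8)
The plan is to obtain the inequality $l \le j+1$ as a one-line consequence of Proposition~\ref{prop:bounded-clique}, and then to settle the realizability claim by exhibiting a small explicit $(i,j)$ digraph.

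First I would prove the bound. Suppose an $(i,j)$ phylogeny graph $P(D)$ contains an induced $K_{1,l}$ with center $u$ and leaves $w_1,\ldots,w_l$. Because the copy is induced, the leaves are pairwise nonadjacent in $P(D)$; thus $N:=\{w_1,\ldots,w_l\}$ is a subset of $N_{P(D)}(u)$ in which no two vertices are adjacent. Phrased for Proposition~\ref{prop:bounded-clique}, this says that any $k=2$ vertices of $N$ fail to form a clique. Applying that proposition with $k=2$ gives $l=|N|\le (k-1)(j+1)=j+1$, which is exactly the desired bound. The only conceptual point here is recognizing that an induced star presents its center with an independent neighborhood, after which the proposition does all the work.

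For the ``furthermore'' part I would build an $(i,j)$ digraph $D$ with $P(D)\cong K_{1,j+1}$. Take $V(D)=\{u,w_1,\ldots,w_j,x\}$ and arcs $u\to w_1,\ldots,u\to w_j$ together with $x\to u$. This $D$ is acyclic (the ordering $x,u,w_1,\ldots,w_j$ is topological), each vertex has indegree at most $1\le i$ and outdegree at most $j$, so $D$ is a legitimate $(i,j)$ digraph, and $u$ has exactly the $j+1$ neighbors $w_1,\ldots,w_j,x$ in $P(D)$.

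The remaining step---and the only one requiring any care---is to verify that these $j+1$ neighbors are pairwise nonadjacent, i.e.\ that the competition (common out-neighbor) relation creates no stray edges. Each $w_m$ has no out-neighbor, so no two $w_m$'s share a common out-neighbor and there is no arc between them; the unique out-neighbor of $x$ is $u$, while each $w_m$ is an out-neighbor (not an in-neighbor) of $u$, so $x$ and $w_m$ share no common out-neighbor and are joined by no arc. Hence the neighborhood of $u$ is independent and $P(D)\cong K_{1,j+1}$, establishing realizability. I do not anticipate a genuine obstacle in either direction: the bound is immediate from Proposition~\ref{prop:bounded-clique}, and the construction is routine once one resists the temptation to realize all $j+1$ leaves as out-neighbors (which is impossible since the outdegree is capped at $j$) and instead supplies the extra neighbor as an in-neighbor of the center.
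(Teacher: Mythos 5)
Your proposal is correct and takes essentially the same approach as the paper: the bound follows from applying Proposition~\ref{prop:bounded-clique} with $k=2$ to the center's independent neighborhood (the paper phrases this as a contradiction with $K_{1,j+2}$, but the argument is identical), and your realizing digraph --- one arc into the center plus $j$ arcs out of it --- is, up to relabeling of vertices, exactly the paper's $(1,j)$ digraph construction.
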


\begin{proof}
We suppose that
an $(i,j)$ phylogeny graph $P(D)$ contains 
an induced subgraph $H$ isomorphic to $K_{1,j+2}$.
Let $u$ be the center of $H$.
Then $V(H)- \{u\}$ is a subset of $N(u)$ such that any two vertices in $V(H)- \{u\}$ do not form a clique in $P(D)$.
Therefore $|V(H)- \{u\}| \leq j+1$
by Proposition~\ref{prop:bounded-clique}, which is a contradiction.

To show that $H\cong K_{1,j+1}$ is $(i,j)$ realizable,
let $D$ be a digraph with
the vertex set
\[V(D) =\{u,v,w_1,\ldots,w_{j}\} \]
and the arc set
\[A(D)=\{ (u,v)\} \cup \{(v,w_k) \mid 1\leq k \leq j\}.\]
Then we can check that $D$ is a $(1,j)$ digraph and $P(D)$ is isomorphic to $K_{1,j+1}$ with the center $v$ and so the statement is true.\end{proof}

\begin{Lem}\label{lem:K_1,j+1}
If an $(i,j)$ phylogeny graph contains 
an induced subgraph $H$ isomorphic to $K_{1,j+1}$
with the center $v$, 
then $|N^-(v) \cap V(H)|=1$.
\end{Lem}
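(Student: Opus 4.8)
The plan is to determine $|N^-(v)\cap V(H)|$ by squeezing it between $1$ and $1$, i.e.\ proving the two bounds $|N^-(v)\cap V(H)|\le 1$ and $|N^-(v)\cap V(H)|\ge 1$ separately. Write $v$ for the center of $H$ and $w_1,\dots,w_{j+1}$ for its leaves. Since $H$ is an induced subgraph isomorphic to $K_{1,j+1}$, the leaf set $L=\{w_1,\dots,w_{j+1}\}$ is an independent set of size $j+1$ in $P(D)$, and this independence is the single structural fact I will exploit throughout.

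For the upper bound I would argue by contradiction: if two leaves $w_k,w_l$ both belonged to $N^-(v)$, then $w_k\to v$ and $w_l\to v$ would make $v$ a common out-neighbor of $w_k$ and $w_l$, whence $w_kw_l\in E(C(D))\subseteq E(P(D))$, contradicting the independence of $L$. Thus $|N^-(v)\cap V(H)|\le 1$, and this direction is essentially immediate.

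For the lower bound the idea is a counting argument against $N^+(v)$, which has size at most $j$ because $D$ is an $(i,j)$ digraph. Each leaf is a neighbor of $v$, so a leaf $w_k$ that is not an in-neighbor of $v$ is reached either as an out-neighbor of $v$ or as a competitor sharing some out-neighbor $z\in N^+(v)$ with $v$; in either case $w_k$ lies in a clique of the form $N^-(z)\cup\{z\}$ for some $z\in N^+(v)$ (taking $z=w_k$ in the out-neighbor case). Since $L$ is independent, each such clique contains at most one leaf, so distinct non-in-neighbor leaves must be assigned to distinct members of $N^+(v)$; hence there are at most $|N^+(v)|\le j$ of them. As $|L|=j+1$, at least one leaf must be an in-neighbor of $v$, giving $|N^-(v)\cap V(H)|\ge 1$. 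Combining the two bounds yields the claim.

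The main obstacle, such as it is, lies in the lower bound: one must treat uniformly the two ways a non-in-neighbor leaf can be adjacent to $v$ (being an out-neighbor versus being a competitor) and see that both place the leaf inside a single clique $N^-(z)\cup\{z\}$ hanging off one out-neighbor $z$ of $v$, so that independence of $L$ forces injectivity into $N^+(v)$. As an alternative route, I could observe that $L$ realizes equality in Proposition~\ref{prop:bounded-clique} with $k=2$ (since $|L|=j+1=(k-1)(j+1)$) and read off $|N^-(v)\cap V(H)|=1$ directly from the equality conditions in that proposition's chain of inequalities; the direct count above is really that equality analysis unpacked.
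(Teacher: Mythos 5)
Your proposal is correct and takes essentially the same route as the paper: the upper bound $|N^-(v)\cap V(H)|\le 1$ comes from the independence of the leaves (the paper phrases this as triangle-freeness of $H$), and the lower bound comes from counting the leaves that are not in-neighbors of $v$ against the outdegree bound $|N^+(v)|\le j$, with independence of the leaves forcing distinct leaves to consume distinct out-neighbors of $v$. The only difference is presentational: you package the paper's two cases (a leaf that is an out-neighbor of $v$ versus a leaf joined by a cared edge, whose caring vertex the paper shows lies in $N^+(v)-V(H)$ and serves only that one edge) into a single injective assignment of each non-in-neighbor leaf to a vertex of $N^+(v)$ via the clique $N^-(z)\cup\{z\}$, which streamlines but does not change the argument.
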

\begin{proof}
Suppose that, for an $(i,j)$ digraph $D$, $P(D)$ contains an induced subgraph $H$ with the center $v$ isomorphic to $K_{1,j+1}$.
Since $H$ is triangle-free, $|N^-(v) \cap V(H)| \le 1$.
To the contrary, suppose that $|N^-(v) \cap V(H)| =0$.
Then, if all the edges in $H$ are cared edges,
$v$ has at least $j+1$ out-neighbors, which contradicts the fact that $D$ is an $(i,j)$ digraph.
Therefore $H$ has at least one edge in $U(D)$.
Let $|N^+(v)\cap V(H)|=k$ for a positive integer $k$.
Then
\begin{equation}\label{eq:lem:K_1,j+1_1}
|N^+(v)-V(H)|\leq j-k	
\end{equation}
 since $v$ has at most $j$ out-neighbors.
Moreover, there are $j+1-k$ cared edges incident to $v$.
Let $uv$ be a cared edge.
Then, by the definition of cared edges,
$u$ and $v$ have a common out-neighbor $w$.
 Since $H$ is triangle-free, 
 $w$ belongs to $N^+(v)- V(H)$
 and $N^-(w)\cap V(H)=\{u,v\}$.
 Since the cared edge $uv$ was arbitrarily chosen, we may conclude that the number of cared edges, which equals $j+1-k$, is less than or equal to $|N^+(v)-V(H)|$, which contradicts \eqref{eq:lem:K_1,j+1_1}.
\end{proof}

\begin{Prop}\label{prop:forbidden_bipartite}
If an $(i,j)$ phylogeny graph contains an induced subgraph $H$ isomorphic to $K_{m,n}$ for some positive integers $m$ and $n$,
then $m \leq j+1$ and $n\leq j+1$ where the equalities cannot hold simultaneously. Furthermore, $H\cong K_{j+1,j}$ is $(i,j)$ realizable unless $i=1$.
\end{Prop}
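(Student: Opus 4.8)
The plan is to establish the three assertions in turn. For the bounds $m,n\le j+1$, by symmetry it suffices to bound $n$. I would fix a vertex $u$ in the part $X$ of size $m$; since $H$ is complete bipartite, the opposite part $Y$ (of size $n$) lies in $N(u)$, and since $H$ is induced, $Y$ is an independent set of $P(D)$, so no two of its vertices form a clique. Applying Proposition~\ref{prop:bounded-clique} with the neighbor set $Y$ and $k=2$ then yields $n=|Y|\le (2-1)(j+1)=j+1$, and exchanging the two parts gives $m\le j+1$.

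For the claim that the two equalities cannot hold at once, suppose to the contrary that $P(D)$ contains an induced $K_{j+1,j+1}$ with parts $X$ and $Y$. I would first show that every vertex of $X\cup Y$ has exactly one in-neighbor lying in $X\cup Y$. A vertex $x\in X$ has no in-neighbor in $X$, since an arc inside $X$ would create a forbidden edge; moreover $\{x\}\cup Y$ induces a copy of $K_{1,j+1}$ with center $x$, so Lemma~\ref{lem:K_1,j+1} forces $|N^-(x)\cap Y|=1$. Hence $x$ has exactly one in-neighbor in $X\cup Y$, and symmetrically so does each $y\in Y$. Assigning to each vertex of $X\cup Y$ this unique in-neighbor defines a self-map of the finite set $X\cup Y$; iterating it must repeat a vertex and thereby exhibit a directed cycle of $D$, contradicting acyclicity.

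For realizability with $i\ge 2$, I would exhibit an explicit $(i,j)$ digraph $D$. Write $X=\{x_1,\dots,x_{j+1}\}$ and $Y=\{y_1,\dots,y_j\}$. For each $b\in\{1,\dots,j\}$ put in the arc $x_b\to y_b$, and for every pair $(a,b)$ with $a\ne b$ introduce a new vertex $w_{a,b}$ together with the arcs $x_a\to w_{a,b}$ and $y_b\to w_{a,b}$; no arc is placed inside $X$, inside $Y$, or into any $x_a$. One then checks that $D$ is acyclic (the vertices order as the $x$'s, then the $y$'s, then the $w$'s), that each $w_{a,b}$ has indegree $2\le i$ while every other indegree is at most $1$, and that every vertex of $X\cup Y$ has outdegree exactly $j$, so $D$ is an $(i,j)$ digraph. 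It remains to read off $P(D)[X\cup Y]$: each pair $x_ay_b$ is an edge (an arc when $a=b$ and a cared edge through $w_{a,b}$ otherwise), whereas two vertices of the same part share no arc and no common out-neighbor because the $w_{a,b}$ are pairwise distinct and $y_b$ receives only the single arc $x_b\to y_b$. Thus $P(D)[X\cup Y]\cong K_{j+1,j}$.

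The step I expect to be the main obstacle is the degree bookkeeping in this construction: the asymmetry between the parts is essential, because routing exactly one edge at each $y_b$ as the incoming arc $x_b\to y_b$ is precisely what holds every outdegree down to $j$ instead of $j+1$. This slack is what the impossibility argument shows must disappear once both parts have size $j+1$. For completeness one can also record why $i=1$ is excluded when $j\ge 2$: a $(1,j)$ digraph has no cared edges, so $P(D)=U(D)$, and the induced subdigraph on the $2j+1$ vertices of a would-be $K_{j+1,j}$ has at most $2j+1$ arcs, fewer than its $(j+1)j$ edges.
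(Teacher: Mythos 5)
Your proof is correct and matches the paper's argument in all three parts: the size bounds come from Proposition~\ref{prop:bounded-clique} with $k=2$ (the paper routes this through Proposition~\ref{prop:forbidden_star}, which is itself just that application), the impossibility of $m=n=j+1$ uses Lemma~\ref{lem:K_1,j+1} to give each vertex an in-neighbor inside the induced subdigraph and then follows in-neighbors to a forbidden directed cycle, and your realizing digraph is isomorphic (up to relabeling of the $w_{a,b}$) to the paper's construction.
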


\begin{proof}
We suppose that there exists an $(i,j)$ phylogeny graph $P(D)$ containing an induced subgraph $H$ isomorphic to $K_{m,n}$ for some positive integers $m$ and $n$.
Take a vertex $v$ in the partite set of $H$ with size $m$.
Let $X$ be the other partite set of $H$.
Then $(\{v\},X)$ is the bipartition of a subgraph isomorphic to $K_{1,n}$.
Thus $n \leq j+1$ by Proposition~\ref{prop:forbidden_star}.
By symmetry, we conclude $m \leq j+1$.

To show that either $m<j+1$ or $n < j+1$ by contradiction, suppose $m=n=j+1$.
Let $v$ be a vertex in $H$ as before.
Then there exists a subgraph $H_v$ in $H$ isomorphic to $K_{1,j+1}$ such that $v$ is the center of $H_v$.
Therefore, by Lemma~\ref{lem:K_1,j+1}, $v$ has one in-neighbor in the subdigraph induced by $V(H_v)$.
Then, since $H_v$ is a subgraph of $H$, $v$ has one in-neighbor in $V(H)$.
Since $v$ was arbitrarily chosen from $H$,
each vertex in the subdigraph induced by $V(H)$ has one in-neighbor in $V(H)$.
Take a vertex $v_1$ in $V(H)$.
Then there exists an in-neighbor $v_2$ in $V(H)$.
We may repeat this process until we obtain a directed cycle, which contradicts the fact that $D$ is acyclic.

Now we show that $H\cong K_{j+1,j}$ is $(i,j)$ realizable.
We construct an $(i,j)$ digraph whose phylogeny graph contains an induced subgraph isomorphic to $K_{j+1,j}$ for an integer $i\geq 2$.
Let $D$ be a digraph with the vertex set
\[
V(D)=\{u_1,u_2\ldots,u_{j+1}, v_1,v_2,\ldots,v_{j}\} \cup \{w_{l,m} \mid 1\leq l,m \leq j\},\]
and the arc set
\begin{align*}
A(D)&=\{ (u_l,v_l)\mid 1\leq l \leq j\} \cup \{(v_l,w_{l,m}) \mid 1\leq l,m\leq j \} \\
&\cup  \{ (u_{l},w_{m,l} ) \mid 1\leq l,m \leq j, \ l\neq m\} \cup
 \{ (u_{j+1},w_{l,l}) \mid 1\leq l \leq j\}
\end{align*}
 (see the $(2,2)$ digraph whose phylogeny graph having an induced subgraph isomorphic to $K_{3,2}$ with the bipartition $(\{u_1,u_2,u_3\},\{v_1,v_2\})$ given in Figure \ref{fig:$(i,2)$-digraph-bipartite} for an illustration).
 Then we can check $D$ is an $(i,j)$ digraph and 
\[( \{u_1,u_2\ldots,u_{j+1}\}, \{ v_1,v_2,\ldots,v_{j}\} )\]
is the bipartition of a subgraph isomorphic to $K_{j+1,j}$ in $P(D)$.
\end{proof}
\begin{figure}
\begin{center}
\begin{tikzpicture}[auto,thick, scale=1]
    \tikzstyle{player}=[minimum size=5pt,inner sep=0pt,outer sep=0pt,color=black,fill,draw,circle]
    \tikzstyle{source}=[minimum size=5pt,inner sep=0pt,outer sep=0pt,ball color=black, circle]
    \tikzstyle{arc}=[minimum size=5pt,inner sep=1pt,outer sep=1pt, font=\footnotesize]
   \path (0,3.5)   node [player] [label=above:$w_{1,1}$] (w11) {};
    \path (1,3.5)   node [player] [label=above:$w_{1,2}$] (w12) {};
     \path (0,0.5)   node [player] [label=below:$w_{2,1}$] (w21) {};
    \path (1,0.5)   node [player] [label=below:$w_{2,2}$] (w22) {};
      \path (-1,2.7)   node [player] [label=left:$u_1$] (u1) {};
\path (-1,2)   node [player] [label=left:$u_2$] (u2) {};
\path (-1,1.3)   node [player] [label=left:$u_3$] (u3) {};
     \path (2,2.3)   node [player] [label=right:$v_1$] (v1) {};
\path (2,1.4)   node [player] [label=right:$v_2$] (v2) {};
\draw[black,thick,-stealth] (u1) - + (v1);
\draw[black,thick,-stealth] (u2) - + (v2);

\draw[black,thick,-stealth] (v1) - + (w11);
\draw[black,thick,-stealth] (v1) - + (w12);
\draw[black,thick,-stealth] (v2) to (w21);
\draw[black,thick,-stealth] (v2) to  (w22);

\draw[black,thick,-stealth] (u1)  to (w21);
\draw[black,thick,-stealth] (u2) - + (w12);
\draw[black,thick,-stealth] (u3) - + (w11);
\draw[black,thick,-stealth] (u3) - + (w22);
    \end{tikzpicture}
    \end{center}
\caption{A $(2,2)$ digraph whose phylogeny graph contains $K_{3,2}$ as an induced subgraph}
\label{fig:$(i,2)$-digraph-bipartite}
\end{figure}
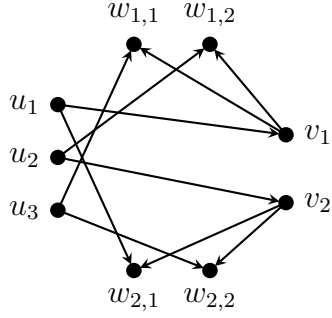

For $i=1$ or $j=1$,
an $(i,j)$ phylogeny graph is completely characterized by the following theorems.

\begin{Thm} [\cite{eoh2021chordal}] \label{thm:eoh-char-(1,j)}
 For a positive integer $j$, a graph is a $(1,j)$ phylogeny graph if and only if it is a forest with the maximum degree at most $j+1$.
\end{Thm}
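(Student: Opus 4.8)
The plan is to exploit the indegree bound: in a $(1,j)$ digraph no vertex can have two distinct in-neighbours, so no two vertices share a common out-neighbour, whence $C(D)$ is edgeless and $P(D)=U(D)$. This identity drives both directions, reducing the characterization to a statement about underlying graphs of acyclic digraphs with indegree at most $1$ and outdegree at most $j$.

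For necessity, assume $G=P(D)=U(D)$ for such a $D$. First I would bound the degrees: since $D$ is acyclic it has no directed $2$-cycles, so for each vertex $v$ the in-arcs and out-arcs contribute distinct underlying edges and $|N_G(v)|=|N^-_D(v)|+|N^+_D(v)|\le 1+j$. Then I would argue that $U(D)$ is acyclic as a graph: if it contained a cycle $v_1 v_2\cdots v_k v_1$, then the $k$ cycle edges contribute a total of $k$ to the indegrees of the cycle vertices; since each such vertex has total indegree at most $1$, each must have exactly one in-arc and hence exactly one out-arc along the cycle, making it a directed cycle and contradicting the acyclicity of $D$. Hence $G$ is a forest of maximum degree at most $j+1$.

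For sufficiency, given a forest $G$ with maximum degree at most $j+1$, I would orient each nontrivial component tree as an arborescence rooted at a leaf, directing every edge away from the root (a trivial component is left as an isolated vertex). Each non-root vertex then has indegree exactly $1$ and outdegree $|N_G(\cdot)|-1\le j$, while the leaf root has indegree $0$ and outdegree $1\le j$; the resulting digraph $D$ is acyclic. Because every vertex has indegree at most $1$, no cared edges arise, so $P(D)=U(D)=G$, as required.

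The only delicate point is the choice of root in the sufficiency direction: rooting at an arbitrary vertex of degree $j+1$ would give that vertex outdegree $j+1$, violating the bound. Rooting at a leaf, which always exists in a tree with at least one edge and has degree $1\le j$, resolves this, and it is the step I expect to be the crux of the construction.
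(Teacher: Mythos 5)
Your proof is correct. Note, however, that the paper you are being compared against does not prove this statement at all: it is quoted as a known theorem from Eoh and Kim \cite{eoh2021chordal}, so there is no in-paper argument to measure yours against. On its own merits, your argument is complete and follows the natural route: the indegree bound $1$ kills all cared edges, giving $P(D)=U(D)$; the counting argument on a cycle of $U(D)$ (each of the $k$ cycle edges points at a cycle vertex, and indegree at most $1$ forces every cycle vertex to have exactly one in-arc and one out-arc along the cycle, hence a directed cycle) correctly rules out cycles; and for sufficiency, rooting each nontrivial tree at a leaf and orienting away from the root is exactly the right fix for the outdegree constraint, since an arbitrary root of degree $j+1$ would violate the bound. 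The only point worth making explicit is that a tree with at least one edge always has a leaf, which you implicitly use; with that, the proof stands as a correct, self-contained justification of the cited result.
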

Given a graph $G$, we denote the size of a maximum clique in $G$ by $\omega(G)$. 
The {\it clique graph} of a graph $G$, denoted by $K(G)$, is a simple graph such that (i) every vertex of $K(G)$ represents a maximal clique of $G$ and (ii) two vertices of $K(G)$ are adjacent when they share at least one vertex in common in G.

\begin{Thm} [\cite{eoh2021chordal}] \label{thm:eoh-char-(i,1)}
 For a positive integer $i$, a graph is an $(i,1)$ phylogeny graph if and only if it is a diamond-free chordal graph with $\omega(G) \leq i+1$ and its clique graph is a forest.
\end{Thm}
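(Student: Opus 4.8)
The plan is to prove both directions by first pinning down the maximal cliques of the phylogeny graph of an $(i,1)$ digraph. Since every vertex of an $(i,1)$ digraph $D$ has outdegree at most one, each vertex $u$ has at most one out-neighbor; write $f(u)$ for it when it exists. Then $uv\in E(P(D))$ holds precisely when $f(u)=v$, or $f(v)=u$, or $f(u)=f(v)$. For each vertex $w$ the set $N^-[w]:=N^-(w)\cup\{w\}$ is a clique of $P(D)$, since its in-neighbors pairwise compete through $w$ and are each adjacent to $w$, and its size is at most $i+1$ because the indegree of $w$ is at most $i$. The first key step is to show that \emph{every} clique of $P(D)$ is contained in some $N^-[w]$: taking a clique $S$ and a sink $t$ of the acyclic subdigraph $D[S]$ (a vertex with $f(t)\notin S$), a short case analysis using that any two vertices of $S$ are adjacent forces all of $S$ to point to a single vertex, so $S\subseteq N^-[w]$. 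Hence $\omega(P(D))\le i+1$, and every maximal clique of size at least two has the form $N^-[w]$.

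For the remaining forward claims I would compute that $N^-[w]\cap N^-[w']$ is nonempty if and only if there is an arc between $w$ and $w'$, and in that case the intersection is a single vertex. Two maximal cliques therefore meet in at most one vertex, which is equivalent to every edge lying in a unique maximal clique, i.e.\ to $P(D)$ being \emph{diamond-free}. The same computation shows the apex map $w\mapsto N^-[w]$ identifies $K(P(D))$ with the subgraph of $U(D)$ induced on $\{w:N^-(w)\ne\emptyset\}$; since an acyclic digraph of outdegree at most one has a forest as its underlying graph, $K(P(D))$ is a forest. Chordality I would obtain from a perfect elimination ordering: a source $v$ of $D$ is simplicial in $P(D)$ because $N(v)=N^-[f(v)]\setminus\{v\}$, and deleting a source disturbs no remaining competition edge, so $P(D-v)=P(D)-v$; peeling off sources repeatedly yields the ordering.

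For the converse I would reverse-engineer a digraph from the clique structure of a graph $G$ that is diamond-free and chordal with $\omega(G)\le i+1$ and forest clique graph. Diamond-freeness means every edge lies in a unique maximal clique, so any two maximal cliques sharing a vertex share only that vertex; the maximal cliques through a fixed vertex then form a clique in $K(G)$, and since $K(G)$ is a triangle-free forest, \emph{every vertex of $G$ lies in at most two maximal cliques}. This is the structural fact that makes an outdegree-one realization possible. The construction assigns to each maximal clique $M$ an \emph{apex} $a(M)\in M$ and sets $A(D)=\{(v,a(M)):M\text{ maximal},\ v\in M\setminus\{a(M)\}\}$, so that each $M$ becomes $N^-[a(M)]$. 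To respect the degree bounds I would root each tree of $K(G)$ at a leaf clique, put $a(M)$ equal to the unique vertex shared with the parent clique for every non-root $M$, and put $a(R)$ equal to a vertex of the leaf root $R$ lying in no other clique (such a private vertex exists because a leaf clique has at least two vertices and shares only one).

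Finally I would verify the three requirements. Outdegree at most one holds because a vertex lying in two cliques is the apex of the child one and hence emits an arc only possibly from the parent; indegree at most $i$ holds because one checks that each vertex is the apex of at most one clique, so its in-neighborhood is a single $M\setminus\{a(M)\}$ of size at most $\omega(G)-1\le i$; and $P(D)=G$ holds because every arc-induced or competition edge of $P(D)$ lies inside one clique $M$, while conversely each edge of $G$ in its clique $M$ is realized either as an arc (when its apex is an endpoint) or as a competition edge through the apex (otherwise). \textbf{Main obstacle.} The delicate point is this converse construction: choosing the apexes so that the outdegree-one and indegree-at-most-$i$ constraints hold simultaneously. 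This is exactly where the facts that every vertex lies in at most two maximal cliques and that $K(G)$ is a forest are indispensable, and the rooting-at-a-leaf device is what prevents two cliques from being forced onto a common apex, which would otherwise inflate an indegree.
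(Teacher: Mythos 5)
The paper itself contains no proof of this theorem: it is quoted verbatim from Eoh and Kim \cite{eoh2021chordal}, so your proposal can only be measured against the statement, not against an internal argument. Your forward direction is sound. The sink analysis on a clique $S$ (every vertex of $S$ other than a sink $t$ of $D[S]$ must satisfy $f(s)=t$ or $f(s)=f(t)$, and mixing the two options destroys adjacency) correctly shows that every clique lies in some $N^-[w]$; from this the bound $\omega\le i+1$, the identification of maximal cliques with the sets $N^-[w]$, the at-most-one-vertex intersection property (hence diamond-freeness), the isomorphism of $K(P(D))$ with an induced subgraph of the forest $U(D)$, and the perfect elimination ordering obtained by peeling sources all follow as you say. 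The two facts you use silently (an acyclic digraph with outdegree at most one has a forest as its underlying graph, and $P(D-v)=P(D)-v$ when $v$ is a source) are both true and easy.

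The one genuine gap is in the converse: you verify outdegree at most $1$, indegree at most $i$, and $P(D)=G$, but you never verify that the constructed digraph $D$ is \emph{acyclic}, and acyclicity is part of the definition of an $(i,1)$ digraph. The construction does produce an acyclic digraph, but this needs an argument. A directed cycle $v_1\to v_2\to\cdots\to v_m\to v_1$ would come from maximal cliques $M_1,\dots,M_m$ with $v_s\in M_s$ and $a(M_s)=v_{s+1}$. For $m=2$ this gives two distinct maximal cliques (distinct because a clique has a single apex) both containing $v_1$ and $v_2$, contradicting diamond-freeness. For $m\ge 3$, consecutive cliques $M_s,M_{s+1}$ share $v_{s+1}$ and are distinct (equal cliques would have equal apexes, forcing $v_{s+1}=v_{s+2}$), so the $M_s$ form a closed walk in the forest $K(G)$ with consecutive vertices distinct; such a walk must backtrack, $M_{s-1}=M_{s+1}$ for some $s$, whence $v_s=a(M_{s-1})=a(M_{s+1})=v_{s+2}$, contradicting the distinctness of the cycle vertices. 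Alternatively, your rooting gives the same conclusion: every arc either goes from a clique strictly toward the root of its tree or ends at the root's private apex, which has outdegree zero. One further remark, which is an observation rather than a gap: your converse never invokes chordality. That is legitimate, because a hole of length at least four in $G$ already forces a cycle of distinct maximal cliques in $K(G)$, so the forest hypothesis subsumes chordality; but since the theorem lists chordality as a hypothesis, it is worth stating explicitly that you do not need it.
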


\begin{figure}
\begin{center}
\begin{tikzpicture}[auto,thick, scale=1.2]
    \tikzstyle{player}=[minimum size=5pt,inner sep=0pt,outer sep=0pt,color=black,fill,draw,circle]
    \tikzstyle{source}=[minimum size=5pt,inner sep=0pt,outer sep=0pt,ball color=black, circle]
    \tikzstyle{arc}=[minimum size=5pt,inner sep=1pt,outer sep=1pt, font=\footnotesize]
   \path (0:0cm)   node [player] [label=right:$u$] (u) {};
    \path (225:1cm)   node [player] [label=left:$w_4$] (w4) {};
    \path (180:1cm)   node [player]
    [label=left:$w_3$]
     (w3) {};
    \path (135:1cm)    node [player]
     [label=left:$w_2$]
     (w2) {};
    \path (90:1cm)   node [player]
    [label=above:$w_1$]
       (w1) {};
    \path (45:1cm)   node [player]
     [label=right:$v_4$]
       (v4) {};
    \path (0:1cm)   node [player]
     [label=right:$v_3$]
      (v3) {};
      \path (-45:1cm)   node [player]
     [label=below:$v_2$]
      (v2) {};
       \path (270:1cm)   node [player]
     [label=below:$v_1$]
      (v1) {};
    \draw (270:2cm) node (name) {$D_1$};

\draw[black,thick,-stealth] (u) - + (v4);
\draw[black,thick,-stealth] (u) - + (w2);
\draw[black,thick,-stealth] (u) - + (w4);

\draw[black,thick,-stealth] (v1) - + (u);
\draw[black,thick,-stealth] (v2) - + (u);
\draw[black,thick,-stealth] (v2) - + (v3);
\draw[black,thick,-stealth] (v3) - + (v4);
\draw[black,thick,-stealth] (v4) - + (w1);
\draw[black,thick,-stealth] (w1) - + (w2);
\draw[black,thick,-stealth] (w2) - + (w3);
\draw[black,thick,-stealth] (w3) - + (w4);
    \end{tikzpicture}
\hspace{3em}
\begin{tikzpicture}[auto,thick, scale=1.2]
    \tikzstyle{player}=[minimum size=5pt,inner sep=0pt,outer sep=0pt,color=black,fill,draw,circle]
    \tikzstyle{source}=[minimum size=5pt,inner sep=0pt,outer sep=0pt,ball color=black, circle]
    \tikzstyle{arc}=[minimum size=5pt,inner sep=1pt,outer sep=1pt, font=\footnotesize]
   \path (0:0cm)   node
   [player]
[label={[xshift=0.38cm, yshift=-0.18cm]:$u$}]
 (u) {};
    \path (225:1cm)   node [player] [label=left:$w_4$] (w4) {};
    \path (180:1cm)   node [player]
    [label=left:$w_3$]
     (w3) {};
    \path (135:1cm)    node [player]
     [label=left:$w_2$]
     (w2) {};
    \path (90:1cm)   node [player]
    [label=above:$w_1$]
       (w1) {};
    \path (45:1cm)   node [player]
     [label=right:$v_4$]
       (v4) {};
    \path (0:1cm)   node [player]
     [label=right:$v_3$]
      (v3) {};
      \path (-45:1cm)   node [player]
     [label=below:$v_2$]
      (v2) {};
       \path (270:1cm)   node [player]
     [label=below:$v_1$]
      (v1) {};
    \draw (270:2cm) node (name) {$P(D_1)$};

\draw (u) -- (v1);
\draw (u) -- (v2);
\draw (u) -- (v3);
\draw (u) -- (v4);
\draw (u) -- (w1);
\draw (u) -- (w2);
\draw (u) -- (w3);
\draw (u) -- (w4);

\draw (v1) -- (v2);
\draw (v2) -- (v3);
\draw (v3) -- (v4);
\draw (v4) -- (w1);
\draw (w1) -- (w2);
\draw (w2) -- (w3);
\draw (w3) -- (w4);
    \end{tikzpicture}

\begin{tikzpicture}[auto,thick, scale=1.2]
    \tikzstyle{player}=[minimum size=5pt,inner sep=0pt,outer sep=0pt,color=black,fill,draw,circle]
    \tikzstyle{source}=[minimum size=5pt,inner sep=0pt,outer sep=0pt,ball color=black, circle]
    \tikzstyle{arc}=[minimum size=5pt,inner sep=1pt,outer sep=1pt, font=\footnotesize]
   \path (0:0cm)   node [player] [label=right:$u$] (u) {};
    \path (225:1cm)   node [player] [label=left:$w_4$] (w4) {};
    \path (247.5:1.5cm)   node [player] [label=below:$x$] (x) {};

    \path (180:1cm)   node [player]
    [label=left:$w_3$]
     (w3) {};
    \path (135:1cm)    node [player]
     [label=left:$w_2$]
     (w2) {};
    \path (90:1cm)   node [player]
    [label=above:$w_1$]
       (w1) {};
    \path (45:1cm)   node [player]
     [label=right:$v_4$]
       (v4) {};
    \path (0:1cm)   node [player]
     [label=right:$v_3$]
      (v3) {};
      \path (-45:1cm)   node [player]
     [label=below:$v_2$]
      (v2) {};
       \path (270:1cm)   node [player]
     [label=below:$v_1$]
      (v1) {};
    \draw (270:2cm) node (name) {$D_2$};

\draw[black,thick,-stealth] (u) - + (v4);
\draw[black,thick,-stealth] (u) - + (w2);
\draw[black,thick,-stealth] (u) - + (w4);

\draw[black,thick,-stealth] (v1) - + (u);
\draw[black,thick,-stealth] (v2) - + (u);
\draw[black,thick,-stealth] (v2) - + (v3);
\draw[black,thick,-stealth] (v3) - + (v4);
\draw[black,thick,-stealth] (v4) - + (w1);
\draw[black,thick,-stealth] (w1) - + (w2);
\draw[black,thick,-stealth] (w2) - + (w3);
\draw[black,thick,-stealth] (w3) - + (w4);

\draw[black,thick,-stealth] (w4) - + (x);
\draw[black,thick,-stealth] (v1) - + (x);

    \end{tikzpicture}
\hspace{3em} 
\begin{tikzpicture}[auto,thick, scale=1.2]
    \tikzstyle{player}=[minimum size=5pt,inner sep=0pt,outer sep=0pt,color=black,fill,draw,circle]
    \tikzstyle{source}=[minimum size=5pt,inner sep=0pt,outer sep=0pt,ball color=black, circle]
    \tikzstyle{arc}=[minimum size=5pt,inner sep=1pt,outer sep=1pt, font=\footnotesize]
   \path (0:0cm)   node
   [player]
[label={[xshift=0.38cm, yshift=-0.18cm]:$u$}]
 (u) {};

    \path (225:1cm)   node [player] [label=left:$w_4$] (w4) {};
     \path (247.5:1.5cm)   node [player] [label=left:$x$] (x) {};
    \path (180:1cm)   node [player]
    [label=left:$w_3$]
     (w3) {};
    \path (135:1cm)    node [player]
     [label=left:$w_2$]
     (w2) {};
    \path (90:1cm)   node [player]
    [label=above:$w_1$]
       (w1) {};
    \path (45:1cm)   node [player]
     [label=right:$v_4$]
       (v4) {};
    \path (0:1cm)   node [player]
     [label=right:$v_3$]
      (v3) {};
      \path (-45:1cm)   node [player]
     [label=below:$v_2$]
      (v2) {};
       \path (270:1cm)   node [player]
     [label=below:$v_1$]
      (v1) {};
    \draw (270:2cm) node (name) {$P(D_2)$};

\draw (u) -- (v1);
\draw (u) -- (v2);
\draw (u) -- (v3);
\draw (u) -- (v4);
\draw (u) -- (w1);
\draw (u) -- (w2);
\draw (u) -- (w3);
\draw (u) -- (w4);

\draw (v1) -- (v2);
\draw (v2) -- (v3);
\draw (v3) -- (v4);
\draw (v4) -- (w1);
\draw (w1) -- (w2);
\draw (w2) -- (w3);
\draw (w3) -- (w4);
\draw (w4) -- (v1);
\draw (w4) -- (x);
\draw (v1) -- (x);

    \end{tikzpicture}
\end{center}
\caption{The $(2,3)$ digraphs $D_1$ and $D_2$ and the phylogeny graphs $P(D_1)$ and $P(D_2)$ containing $P_{8} \vee I_1$ and $C_{8} \vee I_1$ as an induced subgraphs, respectively}
\label{fig:phylogeny-$(i,3)$-digraph_fan}
\end{figure}
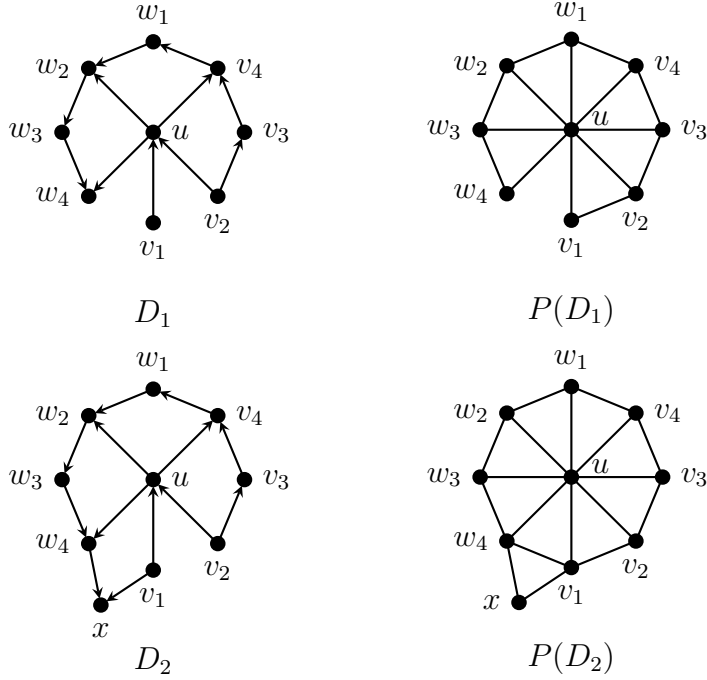

Hereby, we only consider $(i,j)$ phylogeny graphs
for $i \geq 2$ and $j\geq 2$.

The {\it join} of two disjoint graphs $G$ and $H$, denoted by $G \vee H $, is the graph such that $V(G\vee H)=V(G)\cup V(H)$ and $E(G\vee H)=E(G)\cup E(H)\cup \{xy \mid x \in V(G), y \in V(H)\}$.  
\begin{Prop}\label{prop:forbidden_fan}
	If an $(i,j)$ phylogeny graph contains an induced subgraph $H$ isomorphic to a fan $P_\ell \vee I_1$ or a wheel $C_\ell \vee I_1$ for some positive integers $i,j,\ell$ with $i,j \geq 2$, then $\ell \le 2j+2$. Furthermore, $H \cong P_{2j+2} \vee I_1 $ and $H \cong C_{2j+2} \vee I_1 $ are $(i,j)$ realizable.
\end{Prop}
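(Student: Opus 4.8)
The plan is to separate the two assertions: the upper bound $\ell\le 2j+2$ follows almost immediately from Proposition~\ref{prop:bounded-clique}, while the ``furthermore'' part requires two explicit digraph constructions generalizing those of Figure~\ref{fig:phylogeny-$(i,3)$-digraph_fan}.

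For the bound, I would let $u$ be the apex of $H$ (the vertex constituting the $I_1$ part), so that $N:=V(H)-\{u\}$ is a subset of $N(u)$ in $P(D)$ and the induced subgraph $P(D)[N]$ equals $P_\ell$ in the fan case and $C_\ell$ in the wheel case. A path is triangle-free, and $C_\ell$ is triangle-free as soon as $\ell\ge 4$; since $j\ge 2$ gives $2j+2\ge 6$, the only cycle case left out, namely the wheel $C_3\vee I_1=K_4$ with $\ell=3$, already satisfies the claimed inequality. Hence I may assume $P(D)[N]$ is triangle-free, so no three vertices of $N$ form a clique in $P(D)$, and applying Proposition~\ref{prop:bounded-clique} with $k=3$ gives $\ell=|N|\le (3-1)(j+1)=2j+2$.

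For realizability of $H\cong P_{2j+2}\vee I_1$, I would generalize the digraph $D_1$ of Figure~\ref{fig:phylogeny-$(i,3)$-digraph_fan}: take vertices $u,p_1,\dots,p_{2j+2}$ with arcs $p_1\to u$, $p_2\to u$, the directed path $p_2\to p_3\to\cdots\to p_{2j+2}$, and $u\to p_{2k}$ for $k=2,\dots,j+1$. Computing in- and out-neighborhoods shows the caring vertices are $u$ (whose in-neighbors $p_1,p_2$ yield the competition edge $p_1p_2$) and each $p_{2k}$ (for which $u\to p_{2k}$ and $p_{2k-1}\to p_{2k}$ yield the competition edge $u\,p_{2k-1}$). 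Thus $u$ is adjacent in $P(D)$ to every $p_m$ — to $p_1,p_2$ by arcs, to the even-indexed $p_{2k}$ as out-neighbors, and to the odd-indexed $p_{2k-1}$ through competition — while the only edges inside $\{p_1,\dots,p_{2j+2}\}$ are the path edges $p_1p_2,p_2p_3,\dots,p_{2j+1}p_{2j+2}$, so $P(D)$ restricted to $\{u,p_1,\dots,p_{2j+2}\}$ is exactly $P_{2j+2}\vee I_1$. The order $p_1,p_2,p_3,u,p_4,\dots,p_{2j+2}$ is a topological order, so $D$ is acyclic, and the bounds hold since $u$ has outdegree $j$ and indegree $2$ while every other vertex has outdegree at most $2\le j$ and indegree at most $2\le i$.

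For $H\cong C_{2j+2}\vee I_1$ I would mimic $D_2$, adjoining one new vertex $x$ to the previous digraph with arcs $p_1\to x$ and $p_{2j+2}\to x$. The common out-neighbor $x$ creates precisely the competition edge $p_1p_{2j+2}$, closing the path into the cycle $p_1p_2\cdots p_{2j+2}p_1$; since $x$ is adjacent only to $p_1$ and $p_{2j+2}$, it contributes no other edge inside $\{u,p_1,\dots,p_{2j+2}\}$, so the induced subgraph there becomes $C_{2j+2}\vee I_1$. The degrees stay admissible ($p_1$ now has outdegree $2\le j$, $x$ has indegree $2\le i$), and acyclicity persists by appending $x$ at the end of the topological order. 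The step demanding the most care is precisely this edge bookkeeping: one must verify that the cared edges of each construction are exactly the intended ones, so that the induced subgraph on $\{u,p_1,\dots,p_{2j+2}\}$ carries no spurious edge and is genuinely the fan or the wheel rather than a proper supergraph.
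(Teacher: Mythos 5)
Your proposal is correct and takes essentially the same approach as the paper: the upper bound is obtained exactly as in the paper's proof, by noting that the rim of the fan/wheel is triangle-free (after dismissing small $\ell$) and applying Proposition~\ref{prop:bounded-clique} with $k=3$. Moreover, your two digraphs are, up to relabeling ($v_1,v_2,v_3,v_4,w_1,\dots,w_{2j-2}$ in place of $p_1,\dots,p_{2j+2}$), precisely the constructions $D_1$ and $D_2$ used in the paper's realizability argument.
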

\begin{proof}
Suppose there exists an $(i,j)$ digraph $D$ whose phylogeny graph contains an induced subgraph $H$ isomorphic to a fan $P_\ell \vee \{u\}$ or a wheel $C_\ell \vee \{u\}$ in $P(D)$ for some vertex $u$ in $D$ and some positive integer $\ell$.
	Since $j \geq 2$, $2j+2 \geq 4$ and so the statement is immediately true if $\ell \leq 4$.
	Suppose $\ell >4$.
Then $V(H)- \{u\} \subseteq N(u)$ and any $3$ vertices in $V(H)- \{u\}$ do not form a clique in $P(D)$.
Therefore $|V(H)- \{u\}| \leq 2(j+1)$ by Proposition~\ref{prop:bounded-clique}.
Thus $\ell \leq 2(j+1)$.

To show that $H \cong P_{2j+2} \vee I_1 $ and $H \cong C_{2j+2} \vee I_1 $ are $(i,j)$ realizable, let $D_1$ and $D_2$ be
$(i,j)$ digraphs with the vertex sets
\[V(D_1)=\{u,v_1,v_2,v_3,v_4,w_1,\ldots,w_{2j-2}\},\quad V(D_2)=V(D_1) \cup \{x\},\]
the arc sets
\begin{align*}
A(D_1)= &
\{(u,w_k)\mid\text{$k$ is an even integer}\} \cup \{(w_k,w_{k+1}) \mid  \text{$1\leq k < 2j-2$}\}\\
&\cup \{(v_1,u),(v_2,u),(v_2,v_3),(v_3,v_4),(u,v_4),(v_4,w_1) \} 
\end{align*}
and  $ A(D_2)=A(D_1)\cup \{(v_1,x),(w_{2j-2},x)\}$ (see Figure~\ref{fig:phylogeny-$(i,3)$-digraph_fan} for the digraphs $D_1$ and $D_2$ when $i=2$ and $j=3$).
Then one may check that $u$ is adjacent to
$v_1,v_2,v_3,v_4,w_1,\ldots,w_{2j-2}$ in both $P(D_1)$ and $P(D_2)$.
Moreover,
$P:=v_1v_2v_3v_4 w_1w_2\cdots w_{2j-2}$
is an induced path of length $2j+1$ in $P(D_1)$
and
$C:=
v_1v_2v_3v_4 w_1w_2\cdots w_{2j-2} v_1$ is an induced cycle of length $2j+2$ in $P(D_2)$, respectively.
Thus $P \vee \{u\}$ and $C \vee \{u\}$ are the desired induced subgraphs.
Therefore the the statement is true.
\end{proof}
We call a vertex of indegree $0$ in a digraph a {\em source}.
The following lemma is immediate consequence from the definition of phylogeny graphs.
\begin{Lem} \label{lem:maximum-cared-edges}
Let $D$ be an $(i,j)$ digraph.
Suppose that a vertex $u$ has $ij$ neighbors in the phylogeny graph of $D$ and $D'$ is the subdigraph induced by $u$ and these $ij$ neighbors.
If $u$ is a source of $D'$,
then the following are true:
\begin{enumerate} [(1)]
\item $u$ has outdegree $j$ in $D'$;
\item  Each out-neighbor $v$ of $u$ has indegree $i$ in $D'$ and $N^-_D(v) \cap N^+_D(u) =\emptyset$;
\item $N^-_D(v)\cap N^-_D(w)  =\{u\}$ for each pair $\{v,w\}$ of the out-neighbors of $u$.
\end{enumerate}
\end{Lem}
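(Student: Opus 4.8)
The plan is to run a single counting argument on the neighborhood of $u$ in $P(D)$, using the structural description of $N(u)$ recorded in equation~\eqref{eq:prop:bounded-clique1}, and then to read off all three conclusions from the fact that the hypothesis $|N(u)|=ij$ forces every inequality in the count to be an equality.

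First I would translate the source hypothesis into a statement about $D$. Since every in-neighbor of $u$ in $D$ produces an edge of $U(D)\subseteq P(D)$, we have $N^-_D(u)\subseteq N(u)=V(D')-\{u\}$; hence $u$ being a source of $D'$ is equivalent to $N^-_D(u)=\emptyset$. With this, \eqref{eq:prop:bounded-clique1} simplifies to
\[
N(u)=\left(\bigcup_{v\in N^+(u)}\big(N^-(v)-\{u\}\big)\right)\cup N^+(u).
\]
Writing $N^+(u)=\{v_1,\dots,v_k\}$ with $k\leq j$, each $v_m$ satisfies $u\in N^-(v_m)$ and has indegree at most $i$, so $|N^-(v_m)-\{u\}|\leq i-1$. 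Therefore
\[
ij=|N(u)|\leq \sum_{m=1}^{k}|N^-(v_m)-\{u\}|+k\leq k(i-1)+k=ki\leq ji,
\]
so every inequality above is tight. From $k=j$ I obtain outdegree $j$, and since $N^+_D(u)\subseteq V(D')$ this outdegree is realized in $D'$, giving part~(1). From $|N^-(v_m)-\{u\}|=i-1$, i.e.\ $|N^-(v_m)|=i$, each out-neighbor has indegree $i$; moreover every in-neighbor $x\neq u$ of $v_m$ competes with $u$ at $v_m$ and hence lies in $N(u)\subseteq V(D')$, so the indegree is realized inside $D'$, giving the first half of part~(2). Equality in the union bound $|\bigcup_m(N^-(v_m)-\{u\})|=\sum_m|N^-(v_m)-\{u\}|$ forces the sets $N^-(v_m)-\{u\}$ to be pairwise disjoint, which is exactly $N^-_D(v_m)\cap N^-_D(v_{m'})=\{u\}$ for $m\neq m'$, giving part~(3). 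Finally, equality in the step splitting $N(u)$ into the union and $N^+(u)$ forces $N^+(u)$ to be disjoint from $\bigcup_m(N^-(v_m)-\{u\})$; in particular no $v_{m'}$ is an in-neighbor of any $v_m$, that is, $N^-_D(v)\cap N^+_D(u)=\emptyset$ for each out-neighbor $v$, which is the second half of part~(2).

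This is essentially a saturated-inequality computation, so there is no genuine obstacle once $N^-_D(u)=\emptyset$ is established. The only real care — as opposed to difficulty — lies in the bookkeeping that pairs each equality in the chain with the conclusion it yields, and in checking that degrees counted in $D'$ agree with those in $D$; the latter is exactly where the inclusions $N^+_D(u)\subseteq V(D')$ and $N^-_D(v_m)\subseteq V(D')$ are needed. Thus the three parts are precisely the three independent ways in which the bound $|N(u)|\leq ij$ can fail to be strict.
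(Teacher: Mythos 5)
Your proof is correct and is essentially the intended argument: the paper offers no written proof of this lemma (it is asserted to be an immediate consequence of the definition of phylogeny graphs), and your saturation count of $N(u)$ via \eqref{eq:prop:bounded-clique1} with the source hypothesis eliminating $N^-(u)$ — at most $j$ out-neighbors, each contributing at most $i-1$ further in-neighbors, totalling at most $ij$ — is precisely the computation that makes it immediate. The three conclusions then fall out of the forced equalities (outdegree exactly $j$, each $|N^-(v)|=i$, and disjointness of the pieces) exactly as you organize them, including the needed check that these degrees are realized inside the induced subdigraph $D'$.
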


\begin{Lem} \label{lem:existence_source}
Let $D$ be an acyclic digraph.
If $u$ is a source in $D$ and each of its out-neighbors has indegree at least $2$, then $D$ has a source distinct from it.
\end{Lem}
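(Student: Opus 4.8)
The plan is to reduce to a source of the smaller digraph obtained by deleting $u$, and then use the indegree hypothesis to rule out the single way in which that source could fail to be a source of the whole digraph. Throughout I may assume $D$ has a vertex other than $u$; this holds automatically whenever $u$ has an out-neighbor, which is the only situation in which the indegree hypothesis carries any content, and otherwise the claim that $u$ admits another source is the entire point of the reduction below.

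First I would pass to the subdigraph $D' := D - u$. Since every subdigraph of an acyclic digraph is acyclic and $D'$ is a nonempty finite acyclic digraph, $D'$ has a source $s$, that is, a vertex of indegree $0$ in $D'$; in particular $s \neq u$ since $s \in V(D) - \{u\}$. The remaining task is to upgrade this to the statement that $s$ is a source of $D$ itself, not merely of $D'$.

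Next I would examine the in-neighbors of $s$ in $D$. Because $s$ has indegree $0$ in $D' = D - u$, no vertex of $V(D) - \{u\}$ is an in-neighbor of $s$, so the only possible in-neighbor of $s$ in $D$ is $u$. Hence either $u \not\to s$, in which case $s$ has indegree $0$ in $D$ and is the desired second source, or $u \to s$. In the latter case $s$ is an out-neighbor of $u$, so by hypothesis $s$ has indegree at least $2$ in $D$; but the previous observation forces $u$ to be the unique in-neighbor of $s$ in $D$, giving $s$ indegree exactly $1$ in $D$, a contradiction. Thus $u \to s$ is impossible, and $s$ is a source of $D$ distinct from $u$.

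The argument is short enough that there is no serious obstacle; the one subtlety worth flagging is guaranteeing that the new source is genuinely distinct from $u$ and genuinely has indegree $0$ in all of $D$ rather than only in $D'$. Deleting $u$ first and then pinning down $u$ as the unique candidate in-neighbor of $s$ is precisely what makes the indegree-at-least-$2$ hypothesis bite, and it also transparently covers the degenerate case in which $u$ has no out-neighbor at all, since then $u \not\to s$ holds automatically.
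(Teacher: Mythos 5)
Your proof is correct and takes essentially the same route as the paper's: delete $u$, take a source $s$ of the acyclic digraph $D-u$, and use the indegree-at-least-$2$ hypothesis to rule out $u \to s$, so that $s$ is in fact a source of $D$ distinct from $u$. The only differences are presentational — you spell out the two cases and flag the nonemptiness of $D-u$ (an implicit assumption the paper's proof also makes) — but the underlying argument is identical.
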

\begin{proof}
Suppose that $u$ is a source in $D$ and each of its out-neighbors has indegree at least $2$.
Then, since $D$ is acyclic, $D-u$ is acyclic and so there exists a source $v$ in $D-u$.
Since each out-neighbor of $u$ has indegree at least $2$ in $D$, $v$ is not an out-neighbor of $u$ in $D$.
Therefore $v$ is a source in $D$.
\end{proof}

Now we are ready to extend Theorem~\ref{thm:(2,2)-K_5-free} in Lee~{\em et al.}~\cite{lee2017phylogeny} for an $(i,j)$ digraph.

\begin{Thm}\label{thm:complete-free}
Let $G$ be an $(i,j)$ phylogeny graph for positive integers $i,j$ with $i,j \geq 2$.
Then $\omega(G) \leq ij$ and the inequality is tight for $i\leq 3$ and $j=2$.
\end{Thm}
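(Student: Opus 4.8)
The plan is to establish the upper bound $\omega(G)\le ij$ by contradiction and then to certify tightness with two explicit constructions.

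For the bound, let $K$ be a clique of $P(D)$ and set $D'=D[V(K)]$; since $D$ is acyclic, so is $D'$. First I would record the elementary counting estimate. Picking a source $u$ of $D'$, every other vertex of $K$ is either an out-neighbour of $u$ or shares an out-neighbour with $u$ (it cannot be an in-neighbour, as $u$ is a source), so $V(K)\setminus\{u\}$ is covered by $N^+(u)$ together with the in-neighbourhoods of the at most $j$ out-neighbours of $u$. Because $u$ lies in each such in-neighbourhood, this yields $|V(K)|\le 1+s+s(i-1)=si+1\le ij+1$, where $s=|N^+_{D'}(u)|$. Thus it suffices to exclude $|V(K)|=ij+1$; note also that if some source had out-degree $s<j$ the same estimate would already give $|V(K)|\le i(j-1)+1<ij$, so I may assume every source of $D'$ has out-degree exactly $j$.

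So suppose $|V(K)|=ij+1$. Then a source $u$ has exactly $ij$ neighbours in $P(D)$, and Lemma~\ref{lem:maximum-cared-edges} applies: $u$ has out-degree $j$ in $D'$, each out-neighbour has in-degree exactly $i$ in $D'$, and the in-neighbourhoods of distinct out-neighbours of $u$ meet only in $u$. This rigidly partitions $V(K)=\{u\}\sqcup A\sqcup B$ with $A=N^+_{D'}(u)$, $|A|=j$, and $B$ split into the $j$ disjoint sets $N^-_{D'}(a)\setminus\{u\}$ of size $i-1$; in particular every vertex of $B$ sends exactly one arc into $A$. Since $i\ge 2$, every out-neighbour of $u$ has in-degree at least $2$, so Lemma~\ref{lem:existence_source} produces a second source $u'$, which necessarily lies in $B$. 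Applying Lemma~\ref{lem:maximum-cared-edges} to $u'$ gives the mirror partition $\{u'\}\sqcup A'\sqcup B'$, and since $u'\in B$ has a single arc into $A$ one checks that $A\cap A'$ consists of the single common out-neighbour realising the edge $uu'$ of $P(D)$. The heart of the argument, and the step I expect to be hardest, is to derive a contradiction from the coexistence of these two rigid structures. Chasing the forced arcs between $A\setminus(A\cap A')$ and $A'\setminus(A\cap A')$ — each vertex of one set is joined to the opposite source only through a common out-neighbour lying in the other set, while each $B$-vertex has a unique arc back into $A$ — should produce a directed closed walk in $D'$, contradicting acyclicity. When $i=j=2$ this collapses to an explicit directed $2$-cycle; the real labour is handling the extra in-neighbours $N^-(a)\setminus\{u,u'\}$ that appear once $i\ge 3$, where one must argue that the alternating maps $A\setminus(A\cap A')\to A'\setminus(A\cap A')$ and back cannot all funnel into the common vertex and hence must close up a cycle.

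For tightness I would exhibit, for $(i,j)=(2,2)$ and $(3,2)$, an $(i,2)$ digraph whose phylogeny graph contains $K_{2i}$ as an induced subgraph, so that together with the bound $\omega(G)\le ij=2i$ we obtain equality. For $(2,2)$ a four-vertex clique is realised by the arcs $1\to 2,\ 1\to 3,\ 4\to 2,\ 4\to 3$ together with a sink absorbing $2$ and $3$ (so that $14$, $23$ and the remaining pairs are all edges of $P(D)$). For $(3,2)$ a six-vertex clique can be built from two in-degree-$3$ ``hubs'' and two auxiliary sinks, the remaining $K_{2,2}$ of ``long'' pairs being patched by one further sink and a single backward-safe arc, with the whole digraph acyclic by construction. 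In each case verifying that the designated vertices are pairwise adjacent in $P(D)$ and that all in- and out-degree bounds hold is routine but must be checked arc by arc.
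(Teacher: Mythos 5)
Your skeleton coincides with the paper's own proof: pass to the subdigraph $D'$ induced by the clique, take a source $u$, invoke Lemma~\ref{lem:maximum-cared-edges} to get the rigid partition $V(K)=\{u\}\sqcup A\sqcup B$, produce a second source $u'$ via Lemma~\ref{lem:existence_source}, and chase forced arcs to a closed directed walk. But the crux --- the step you yourself label as the one you ``expect to be hardest'' and for which you only assert that the chase ``should produce a directed closed walk'' --- is left unproven, and that is a genuine gap, since everything before it is routine. To close it you need exactly two facts, both of which the paper extracts from Lemma~\ref{lem:maximum-cared-edges} applied to the \emph{second} source $u'$: (a) every out-neighbour of $u'$ lies in $V(K)$ (part (1) gives $u'$ outdegree $j$ in $D'$, while its outdegree in $D$ is at most $j$); and (b) writing $A\cap A'=\{v_1\}$, no out-neighbour of $u'$ lies in $N^-_D(v_1)$ (part (2) applied to $u'$, since $v_1\in N^+_D(u')$). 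Then for any $a\in A\setminus\{v_1\}$: arcs between $a$ and $u'$ are excluded (part (3) for $u$, and $u'$ being a source), so $a$ and $u'$ have a common out-neighbour $y$; by (a), part (2) for $u$, and acyclicity, $y\in B$; by (b), $y\notin N^-_D(v_1)$, so the unique arc from $y$ into $A$ lands on some $v'\in A\setminus\{v_1\}$. Iterating the step $a\to y\to v'$ within the $(j-1)$-element set $A\setminus\{v_1\}$ produces, after $j-1$ iterations, $j$ vertices of that set, two of which coincide, yielding a closed directed walk and the contradiction. This is precisely your ``no funneling into the common vertex'' worry, and it is resolved only by (b); without it the proposal is a plan, not a proof. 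Separately, your estimate $|V(K)|\le 1+s+s(i-1)$ with $s=|N^+_{D'}(u)|$ is wrong as written: the caring vertices are out-neighbours of $u$ in $D$, not in $D'$, so the correct count is $|V(K)|\le 1+s+j(i-1)$; this still gives $ij+1$ in general and at most $ij$ when $s<j$, so your reduction survives with corrected arithmetic.

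On tightness, your $(2,2)$ digraph is correct and is essentially the paper's example (Figure~\ref{fg:2,2digraph}). For $(3,2)$, however, you offer only a verbal recipe (``two hubs, two auxiliary sinks, one further sink, a backward-safe arc'') with no arc list; since realizing $K_6$ under the $(3,2)$ degree bounds is exactly the delicate part, this does not establish tightness. An explicit witness (the paper's Figure~\ref{fg:3,2digraph}) is: vertices $v_1,\ldots,v_6,x,y$ with arcs $v_1\to v_2$, $v_3\to v_2$, $v_4\to v_2$, $v_2\to v_5$, $v_3\to v_6$, $v_4\to v_6$, $v_5\to v_6$, $v_2\to x$, $v_6\to x$, $v_1\to y$, $v_5\to y$, $v_6\to y$; all indegrees are at most $3$, all outdegrees at most $2$, the digraph is acyclic, and $\{v_1,\ldots,v_6\}$ is a clique in its phylogeny graph (e.g.\ $v_2v_6$ via $x$, $v_1v_5$ and $v_1v_6$ via $y$, $v_3v_5$ and $v_4v_5$ via $v_6$, the rest via $v_2$ or by arcs). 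You would need to supply something of this level of explicitness for your construction to count.
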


\begin{proof}
To reach a contradiction, suppose that there is an $(i,j)$ digraph $D$ whose phylogeny graph $P(D)$ contains an induced subgraph $H$ isomorphic to $K_{ij+1}$.
Let $D_1$ be the subdigraph of $D$ induced by $V(H)$.
Since $D_1$ is acyclic, there is a source $u$ in $D_1$.
Then $|N^+_{D_1}(u)|=j$ by Lemma~\ref{lem:maximum-cared-edges}(1). 
Therefore \[N^+_{D_1}(u)=N^+_{D}(u).\]
Let \[\langle v \rangle^-=N^-_{D_1}(v)- \{u\}\]
 for each $v \in N^+_{D}(u)$.
 Then, since $ij$ edges in $H$ are incident to $u$, for each $v \in N^+_D(u)$,
\begin{equation}\label{eq:thm:k_ij+1-new0_1}
|\langle v \rangle^-|=i-1, \quad  \quad \langle v \rangle^- \subseteq V(H),
\end{equation}
and
\begin{equation}\label{eq:thm:k_ij+1-new0_2}
\langle v \rangle^-\cap N^+_D(u)=\emptyset
\end{equation}
by
Lemma~\ref{lem:maximum-cared-edges}(2) and
\begin{equation}\label{eq:thm:k_ij+1-new1}
\langle v \rangle^- \cap \langle w \rangle^-  = \emptyset
 \end{equation}
 for each pair $\{v,w\}$ of the out-neighbors of $u$ by
Lemma~\ref{lem:maximum-cared-edges}(3).
Therefore $N^-_{D_1}(v)=
N^-_{D}(v)$ by \eqref{eq:thm:k_ij+1-new0_1}
and so
\[\langle v \rangle^-=N^-_D(v)- \{u\}\] for each $v \in N^+_{D}(u)$.
We note that $|N^+_{D}(u)|=j$ and $|\langle v \rangle^-|=i-1$ for each $v \in N^+_D(u)$ by \eqref{eq:thm:k_ij+1-new0_1}.
Therefore
\begin{equation}\label{eq:thm:k_ij+1-disjoint}
V(H)= \{u\} \sqcup N^+_D(u)
\sqcup
\left(\bigsqcup_{v \in N^+_D(u)} \langle v \rangle^- \right).
\end{equation}
Since each out-neighbor of $u$ has indegree $i$ in $D_1$ by Lemma~\ref{lem:maximum-cared-edges}(2),
$D_1$ has a source $w$ distinct from $u$ by Lemma~\ref{lem:existence_source}.
 Then $w \in \bigsqcup_{v \in N^+_D(u)} \langle v \rangle^- $ by \eqref{eq:thm:k_ij+1-disjoint} and so $w \in \langle v_1\rangle^-$ for some $v_1 \in N^+_D(u)$. 
 Thus, by \eqref{eq:thm:k_ij+1-new1}, \begin{equation}\label{eq:thm:k_ij+1-new2}
w \not \to v
\end{equation}
for any $v \in N^+_D(u) - \{v_1\}$.
Since $w$ is a source, by Lemma~\ref{lem:maximum-cared-edges}(1) and (2) applied to $w$,
the out-neighbors of $w$ belong to $V(H)$ and
\begin{equation}\label{eq:thm:k_ij+1-new3}
w \not \to x
\end{equation} for any $x \in \langle v_1 \rangle^-$.
Take $v_2$ in $N^+_D(u)- \{v_1\}$.
Then, by \eqref{eq:thm:k_ij+1-new2}, $w \not \to v_2$.
Moreover, since $w$ is a source, $v_2 \not \to w$ and so
$w$ and $v_2$ have a common out-neighbor $y_1$ in $V(H)$.
By \eqref{eq:thm:k_ij+1-new3},
$y \notin \langle v_1 \rangle^-$.
Then, since $u \to v_2$, and $v_2 \to y_1$,
$u \neq y_1$.
If $y_1 \in N^+_D(u)$, then $v_2 \in \langle y_1\rangle^-$, which contradicts \eqref{eq:thm:k_ij+1-new0_2}.
Thus $y_1 \notin N^+_D(u)$.
Then, since $y_1 \neq u$, $y_1 \in \bigsqcup_{v \in N^+_D(u)} \langle v \rangle^-$ by \eqref{eq:thm:k_ij+1-disjoint}.
 Therefore $y_1 \in \langle v_3\rangle^-$ for some $v_3 \in N^+_D(u)$.
Then $y_1 \to v_3$.
In addition,  since $w\to y_1$,
$y_1 \notin \langle v_1 \rangle^-$ by \eqref{eq:thm:k_ij+1-new3}
and so 
$v_3 \neq v_1$.
We obtain a directed path $P_1:= v_2 \to y_1 \to v_3$ whose sequence has two vertices $v_2$ and $v_3$ belonging to $N^+_D(u)-\{v_1\}$.
We note that we only used the fact that $v_2$ belongs to $N^+_D(u)- \{v_1\}$ to derive the directed path $P_1$.
Since $v_3$ also belongs to $N^+_D(u)- \{v_1\}$,
we may apply the same argument to obtain a directed path $P_2 :=v_3 \to y_2 \to v_4$ for some vertices $y_2$ in $V(H)$ and $v_4$ in $N^+_D(u) - \{v_1\}$. 
In the above process, we observe that a directed path $P_1 \to P_2$ was obtained where $P_a = v_{a+1} \to y_{a} \to v_{a+2}$  for each $a \in \{1,2\}$,  
$\{v_2,v_3,v_4\} \subseteq N^+_D(u) - \{v_1\}$,  and $\{y_1,y_2\}\subseteq V(H)$.
We continue in this way to obtain the directed walk $P:=P_1 \to P_2 \to \cdots \to P_{j-1}$.
By the way, $P$ contains a closed directed walk.
For, $v_2,\ldots,v_{j+1}$ belong to $N^+_D(u) - \{v_1\}$ and $|N^+_D(u) - \{ v_1 \}|=j-1$ (recall that $u$ has outdegree $j$) and so
$v_l=v_m$ for some distinct integers $l,m \in \{ 2,\ldots,j+1 \}$.
Therefore we reach a contradiction to the fact that $D$ is acyclic.
Hence $P(D)$ is $K_{ij+1}$-free.
Consequently,
if an $(i,j)$ phylogeny graph
contains an induced subgraph isomorphic to $K_{l}$ for positive integers $i\geq 2$, $j \geq 2$ and $l$,
then $l \leq ij$.

By the digraphs given in Figures \ref{fg:2,2digraph} and
\ref{fg:3,2digraph}, the
inequality is tight when $i\leq 3$ and $j=2$.
Therefore the statement is true.
\end{proof}

\begin{Lem} \label{lem:expanding_complete}
If there exists an $(i,j)$ phylogeny graph containing an induced subgraph $H$ isomorphic to $K_l$ for a positive integer $l\geq2$, then, for any positive integer $m$,  there exists an $(i+m,j)$ phylogeny graph containing an induced subgraph isomorphic to $K_{l+m}$.
\end{Lem}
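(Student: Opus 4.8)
The plan is to reduce to the case $m=1$ and then iterate. Suppose we can establish the following single-step claim: whenever an $(i,j)$ phylogeny graph contains an induced $K_l$ with $l \ge 2$, some $(i+1,j)$ phylogeny graph contains an induced $K_{l+1}$. Granting this, the lemma follows by applying the claim $m$ times, since at each step the clique produced still has at least two vertices while the indegree bound rises by exactly one and the outdegree bound $j$ is unchanged.

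To prove the single-step claim, let $D$ be an $(i,j)$ digraph whose phylogeny graph contains an induced $K_l$ on a vertex set $S$ with $|S|=l \ge 2$, and let $D_S$ be the subdigraph of $D$ induced by $S$. Since $D$ is acyclic, so is $D_S$, and hence $D_S$ has a source $s$; that is, no vertex of $S$ has an arc into $s$. I would then build $D'$ from $D$ by adjoining a single new vertex $z$ together with the arcs $\{(z,w) \mid w \in N_D^+(s)\}$ and no arc entering $z$. In words, $z$ is a new source that copies exactly the out-neighbors of $s$.

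The key step is to check that $z$ is adjacent in $P(D')$ to every vertex of $S$. Fix any $t \in S \setminus \{s\}$. Then $st$ is an edge of $P(D)$, and since $s$ is a source of $D_S$ we have $(t,s) \notin A(D)$; hence either $(s,t) \in A(D)$ or $s$ and $t$ have a common out-neighbor $w$ in $D$. In the former case $t \in N_D^+(s) = N_{D'}^+(z)$, so $(z,t)$ is an arc of $D'$; in the latter case $w \in N_D^+(s) = N_{D'}^+(z)$ is a common out-neighbor of $z$ and $t$. Either way $z$ is adjacent to $t$ in $P(D')$. In particular this dichotomy shows $N_D^+(s) \neq \emptyset$, so $z$ and $s$ share an out-neighbor and are adjacent as well. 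Thus $z$ is adjacent to all of $S$.

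Finally I would verify the degree bounds and inducedness. The vertex $z$ has indegree $0 \le i+1$ and outdegree $|N_D^+(s)| \le j$; since $z$ has no in-arc, no existing outdegree changes, and the only indegrees that change are those of the out-neighbors of $s$, each increasing by exactly one to at most $i+1$. Hence $D'$ is an $(i+1,j)$ digraph, and it is acyclic because $z$ is a source. Adjoining $z$ creates new edges of $P(D')$ only at $z$, so $S$ still induces $K_l$; combined with the adjacency of $z$ to all of $S$, the set $\{z\} \cup S$ induces $K_{l+1}$ in $P(D')$. The main obstacle is precisely the preservation of the outdegree bound $j$: copying the in-neighbors of $s$ as well would raise the outdegrees of those in-neighbors beyond $j$, so it is essential to copy only the out-neighbors of a \emph{source} of $D_S$, whose every clique-edge is realized outward.
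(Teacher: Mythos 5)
Your proposal is correct and follows essentially the same route as the paper's proof: reduce to the case $m=1$, take a source $s$ of the subdigraph induced by the clique, adjoin a new vertex duplicating exactly the out-neighbors $N_D^+(s)$, and verify adjacency, acyclicity, and the degree bounds. The only cosmetic difference is that you phrase the adjacency check as a dichotomy for each $t\in S\setminus\{s\}$, whereas the paper packages it as the containment $V(H)\subseteq \bigl(\bigcup_{x\in N_D^+(u)}N_D^-(x)\bigr)\cup N_D^+(u)$; the substance is identical.
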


\begin{proof}
Suppose that there exists an $(i,j)$ digraph $D$ whose phylogeny graph $P(D)$ contains an induced subgraph $H$ isomorphic to $K_l$ for some positive integer $l\geq 2$.
To show the statement, it suffices to construct an $(i+1,j)$ digraph whose phylogeny graph contains an induced subgraph isomorphic to $K_{l+1}$.

Let $D_1$ be the subdigraph induced by $V(H)$ of $D$.
Since $D$ is acyclic, $D_1$ is acyclic.
Then there exists a source $u$ in $D_1$.
Take a vertex $v \in V(H)$.
Then $v=u$ or $v \in N^+_{D_1}(u)$ or $u$ and $v$ have a common out-neighbor, i.e. $v\in N^-_D(x)$ for some $x \in N^+_D(u)$.
Since $N^+_{D_1}(u)\subseteq N^+_D(u)$,
we have shown 
\begin{equation} \label{eq:lem:expanding_complete-1}
V(H) \subseteq \left(
\bigcup_{x \in N^+_D(u)} N^-_D(x) \right)  \cup N^+_D(u).
\end{equation}
Then, since $|V(H)|=l \geq 2$,
there exists a vertex $y$ in $V(H)$ distinct from $u$.
Therefore $y \in
\bigcup_{x \in N^+_D(u)} N^-_D(x) $ or $y\in N^+_D(u)$.
Thus, in each case, we show
$N^+_D(u)\neq \emptyset$.
We add a new vertex $w$ and the arc set $\{(w,x) \mid (u,x) \in A(D)\}$ to $D$.
Then the resulting digraph $D'$ is an $(i+1,j)$ digraph.
Moreover, for each $v$ in $V(H)$,
$v$ and $w$ have a common out-neighbor or $w \to v$ by \eqref{eq:lem:expanding_complete-1}.
Thus $V(H) \cup \{w\}$ forms a clique in $P(D')$.
\end{proof}

Given an $(i,2)$ phylogeny graph $G$ for positive integer $i\geq 2$,
$\omega(G) \leq 2i$ by Theorem~\ref{thm:complete-free}.
Further, if $i \geq 4$,
 $\omega(G) \leq \frac{3i}{2}$, which is strictly less than $2i$, by the following theorem.
\begin{Thm}\label{thm:complete-free_j=2}
Let $G$ be an $(i,2)$ phylogeny graph for a positive integer $i\geq 4$.
Then $\omega(G) \leq \frac{3i}{2} +1$ and the inequality is tight.
\end{Thm}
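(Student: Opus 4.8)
The plan is to bound a maximum clique $K$ of $P(D)$ directly, refining the source argument that underlies Theorem~\ref{thm:complete-free} (which already gives $\omega(G)\le 2i$). Write $n=|K|=\omega(G)$ and set $D_1=D[K]$; since $D_1$ is acyclic it has a source $u$. If $u$ has outdegree at most $1$ in $D$, then every other vertex of $K$ is its unique out-neighbour $a$ or shares $a$ with $u$, so $K\subseteq\{a\}\cup N^-(a)$ and $n\le i+2\le\frac{3i}{2}+1$ (as $i\ge4$); hence I may assume $N^+_D(u)=\{a,b\}$ with $a\ne b$. Because $u$ is a source, for each $v\in K\setminus\{u\}$ the edge $uv$ is realized by an arc $u\to v$ or by a common out-neighbour lying in $\{a,b\}$, and in either case $v\in\{a,b\}\cup N^-(a)\cup N^-(b)$. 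Thus $K\subseteq C_a\cup C_b$, where $C_a=\{a\}\cup\bigl(N^-(a)\cap K\bigr)$ and $C_b=\{b\}\cup\bigl(N^-(b)\cap K\bigr)$ are cliques of $P(D)$ of size at most $i+1$.

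Next I would partition $K\setminus\{a,b\}$ into the sets $R$, $P_0$, $Q_0$ of vertices pointing to both $a$ and $b$, to $a$ only, and to $b$ only, and put $r=|R|$, $p=|P_0|$, $q=|Q_0|$ (note $u\in R$, so $r\ge1$). The indegree bound at $a$ and $b$ gives $r+p\le i$ and $r+q\le i$, whence $2r+p+q\le 2i$ and $n\le r+p+q+2\le 2i+2-r$. Consequently it suffices to prove the overlap estimate $r\ge\frac{i}{2}+1$, equivalently $|C_a\cap C_b|\ge\frac{i}{2}+1$; this yields $n\le 2(i+1)-\left(\tfrac{i}{2}+1\right)=\frac{3i}{2}+1$.

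The heart of the argument, and the step I expect to be the main obstacle, is establishing this overlap estimate, i.e.\ controlling $p+q$. Since $D$ is an $(i,2)$ digraph, each $x\in P_0$ has at most one out-neighbour besides $a$ (call it $c_x\ne b$) and each $y\in Q_0$ at most one besides $b$ (call it $d_y\ne a$). For $x\in P_0$ and $y\in Q_0$ the edge $xy$ cannot be witnessed by $a$ or $b$, so it must arise from a \emph{shared} second out-neighbour $c_x=d_y$ or from an arc $x\to y$ or $y\to x$. The plan is to show that these constraints, together with the indegree bound at the shared targets, prevent $P_0$ and $Q_0$ from both being large: a vertex of $P_0$ of outdegree $1$ forces every vertex of $Q_0$ to point at it, while for outdegree-$2$ vertices the demand that a single $x$ be adjacent to \emph{all} of $Q_0$ pins almost every $d_y$ to the two values in $\{c_x,x\}$, so the second out-neighbours of $P_0\cup Q_0$ concentrate on very few targets. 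A careful accounting of the indegrees of those targets (which in the extremal configuration also absorb arcs from $a$ and $b$) then bounds $p+q$ and delivers $r\ge\frac{i}{2}+1$. The delicate points are the case split between outdegree-$1$ and outdegree-$2$ vertices, the handling of cross-arcs between $P_0$ and $Q_0$, and whether $a,b$ themselves lie in $K$; this is precisely where the constant $\frac{3i}{2}+1$ (rather than $\frac{3i}{2}+2$) is won.

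Finally, for tightness with $i$ even and $i\ge4$, the plan is to construct an explicit $(i,2)$ digraph $D$ attaining equality. Take three vertices $a,b,z$ together with pairwise disjoint sets $R$, $P_0$, $Q_0$ of sizes $\frac{i}{2}+1$, $\frac{i}{2}-1$, $\frac{i}{2}-1$, and add the arcs $v\to a$, $v\to b$ for $v\in R$, the arcs $x\to a$, $x\to z$ for $x\in P_0$, the arcs $y\to b$, $y\to z$ for $y\in Q_0$, and the two arcs $a\to z$ and $b\to z$. Every vertex then has outdegree at most $2$; the indegrees of $a$, $b$, $z$ are each exactly $i$ while all remaining vertices are sources; and $D$ is acyclic. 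One checks that $R\cup P_0\cup Q_0\cup\{a,b\}$ is a clique of $P(D)$: pairs inside $R\cup P_0$ share $a$, pairs inside $R\cup Q_0$ share $b$, the cross pairs between $P_0$ and $Q_0$ together with $ab$, with each $a$–$Q_0$ pair and each $b$–$P_0$ pair share $z$, and $a,b$ are joined to their in-neighbours by arcs. This clique has size $\left(\tfrac{i}{2}+1\right)+\left(\tfrac{i}{2}-1\right)+\left(\tfrac{i}{2}-1\right)+2=\frac{3i}{2}+1$, proving tightness (the odd-$i$ case being handled by the analogous construction with $\lfloor\frac{3i}{2}\rfloor+1$).
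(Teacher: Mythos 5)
The upper-bound half of your proposal has a genuine gap at exactly the step you yourself flag as ``the heart of the argument'': the overlap estimate $r\ge\frac{i}{2}+1$ is never actually derived, and it cannot be established in the uniform form in which you state it, because it is false in the extremal configurations. In the paper's extremal $(2k,2)$ digraph (see Figure~\ref{fg:4,2digraph} for $k=2$), taking the source $u$ and $a=v$, $b=w$, one computes $r=k=\frac{i}{2}$, not $\frac{i}{2}+1$; and in your own tightness construction, choosing the source to be a vertex of $P_0$ (whose out-neighbours are $a$ and $z$) gives $r=\frac{i}{2}-1$. What closes the count in these examples is an arc between the two out-neighbours of the source ($v\to w$, resp.\ $a\to z$): it occupies one in-slot of the second out-neighbour, sharpening $r+q\le i$ to $r+q\le i-1$, so that $n\le 2i+1-r$ and only $r\ge\frac{i}{2}$ is needed there. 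Hence any correct argument must case-split on whether there is an arc between $a$ and $b$, and must exploit the contradiction hypothesis $n\ge\lfloor\frac{3i}{2}\rfloor+2$ in a sharp way; your reduction to a single inequality $r\ge\frac{i}{2}+1$ is not the right target. Moreover, the sketch you give for bounding $p+q$ yields no usable estimate: ``every $y\in Q_0$ points at an outdegree-one $x\in P_0$'' only gives $q\le i$, and the concentration of the second out-neighbours $c_x,d_y$ on few targets only gives something like $q\le |N^-_D(x)|+|N^-_D(c_x)|+1\le 2i+1$. Closing this gap is the entire content of the theorem --- it is where the paper spends several pages, using an arc-minimality reduction (so that vertices outside the clique have no out-neighbours), Claims A--C, and a case analysis on $|[u]^+|$ and on the arcs between the two out-neighbours of the source --- and nothing in your proposal substitutes for it. As written, your argument proves only $\omega(G)\le 2i+2-r\le 2i+1$, which is weaker even than Theorem~\ref{thm:complete-free}.

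What you do have is correct and worth keeping. The decomposition $K\subseteq\{a,b\}\cup N^-_D(a)\cup N^-_D(b)$ for a source $u$ of $D[K]$ with $N^+_D(u)=\{a,b\}$, and the resulting count $n\le r+p+q+2$ with $r+p\le i$ and $r+q\le i$, are sound and are essentially the same starting point as the paper's identity \eqref{eq:thm:ij-general-vertices}. Your tightness construction is also correct and is cleaner than the paper's: the indegrees of $a,b,z$ equal $i$, all outdegrees are at most $2$, the digraph is acyclic, and $R\cup P_0\cup Q_0\cup\{a,b\}$ is a clique of size $\frac{3i}{2}+1$; the odd case can be handled either by your analogous construction or by adding a vertex via Lemma~\ref{lem:expanding_complete}, as the paper does. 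But the theorem itself remains unproven in your proposal.
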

\begin{proof}
Suppose, to the contrary, that there exists an $(i,2)$ digraph $D$ whose phylogeny graph contains an induced subgraph isomorphic to $K_l$ for some positive integers $i \geq 4$ and $l > \frac{3i}{2}+1$.
It suffices to consider the case where $l$ is the minimum satisfying the inequality and so we may assume
\[l = \left\lfloor \frac{3i}{2} \right\rfloor +2= \left\lfloor \frac{i}{2}\right\rfloor+i+2.\]
If $i$ is even, then there exists an $(i+1,2)$ digraph $\hat{D}$ whose phylogeny graph contains an induced subgraph isomorphic to $K_{\left\lfloor \frac{(i+1)}{2}\right\rfloor+(i+1)+2}$
by Lemma~\ref{lem:expanding_complete} (note that $\left(\frac{i}{2}+i+2\right)+1=\left\lfloor \frac{(i+1)}{2}\right\rfloor+(i+1)+2$).
By replacing $D$ with $\hat{D}$, we may assume that $i$ is odd. 
Therefore $D$ is a $(2k-1,2)$ digraph whose phylogeny graph contains an induced subgraph $H$ isomorphic to $K_{3k}$ for some integer $k\geq 3$.
We assume that 
\begin{enumerate}
\item[(A)]
$D$ has the smallest number of arcs among the $(i,2)$ digraphs with the vertex set $V(D)$ whose phylogeny graphs contain a subgraph isomorphic to $K_{3k}$.
\end{enumerate}
Then 
\begin{enumerate}
\item[(B)] every vertex not belonging to $V(H)$ has no out-neighbor in $D$.
\end{enumerate}
Let $D_1$ be the subdigraph induced by $V(H)$ of $D$. 
Then $V(D_1)$ forms a clique in $P(D)$.
Moreover, $D_1$ is acyclic and so $D_1$ has a source.
If a source in $D_1$ has outdegree at most $1$ in $D$, then it is adjacent to at most $2k-1$ vertices in $H$ and so it
is nonadjacent to some vertex in $H$ in $P(D)$, which is impossible.
Therefore each source in $D_1$ has outdegree $2$ in $D$.
Take a source $u$ in $D_1$. Then
\[
N^+_D(u)=\{v,w\}
\]
for some vertices $v,w$ in $D$.
For simplicity, we let 
\[ [u]^+=N^+_{D_1}(u) , \quad [v]^- =N^-_{D}(v)\cap V(H), \quad \text{and} \quad [w]^-=N^-_{D}(w)\cap V(H).\]
Take a vertex $h \in V(H) -\{u\}$.
Then $h$ is adjacent to $u$.
Therefore one of the following is true:
 (i) $h \in [u]^+$; (ii) $h$ is an in-neighbor of exactly one of $v$ and $w$, that is, $h$ belongs to the symmetric difference $[v]^- \bigtriangleup [w]^-$; (iii) $h$ is a common in-neighbor of $v$ and $w$, i.e. $h \in [v]^-  \cap  [w]^- - \{u\}$.
This together with the fact that $u \in [v]^-  \cap  [w]^-$ implies that
 \begin{equation}\label{eq:thm:ij-general-vertices}	
V(H)= \left([v]^- \bigtriangleup [w]^-  -[u]^+  \right)\sqcup [u]^+ \sqcup \left( [v]^-  \cap  [w]^-\right). \end{equation}
Thus
 \begin{equation}\label{eq:thm:ij-general}	
3k= \left\vert[v]^- \bigtriangleup [w]^-  -[u]^+  \right \vert +  \left\vert[u]^+\right \vert  + \left\vert  [v]^-  \cap  [w]^-\right \vert.
 \end{equation}
Hence
\[  3k \leq  \left\vert[v]^- \bigtriangleup [w]^-  \right \vert +  \left\vert[u]^+\right \vert  + \left\vert  [v]^-  \cap  [w]^-\right \vert = \left\vert  [v]^-\cup    [w]^-\right \vert+ \left\vert[u]^+\right \vert.\] 
Therefore
\begin{align*}
\left\vert[v]^- - [w]^-\right\vert &=\left\vert[v]^-\cup [w]^- \right\vert - \left\vert[w]^-\right\vert
\geq \left\vert[v]^-\cup [w]^-\right\vert - (2k-1)
\\
&\geq
 3k-\left\vert[u]^+\right\vert  - (2k-1) = k+1-\left\vert[u]^+\right\vert
\end{align*}
and so 
\begin{equation}\label{eq:thm:ij-1-twin}
\left\vert[v]^--[w]^-\right\vert  \geq  k+1-\left\vert[u]^+\right\vert .
\end{equation}
Since there is no distinction between $v$ and $w$,
\begin{equation}\label{eq:thm:ij-1}
 \left\vert[w]^--[v]^-\right\vert  \geq k+1-\left\vert[u]^+\right\vert .
\end{equation}
By our assumption, $k\geq 3$ and $|[u]^+|\leq 2$.
Therefore we have the following by \eqref{eq:thm:ij-1-twin} and \eqref{eq:thm:ij-1}:
\begin{equation}\label{eq:thm:ij-1-1}
\left\vert [v]^- - [w]^-\right\vert  \geq 2 \quad \text{and} \quad
\left\vert [w]^- - [v]^-\right\vert  \geq 2.
\end{equation}
We know $u$ has outdegree $2$ in $D$ but we do not know if these out-neighbors are both in $D_1$. Suppose they are not. That is, suppose $|[u]^+|=0$.
Then $\{v,w\} \cap V(H)=\emptyset$ and so, by the property (B), $N^+_D(v)=N^+_D(w)=\emptyset$.
Let $D_2$ be the subdigraph of $D_1$ induced by $[v]^- \bigtriangleup [w]^-$.
Then $D_2$ is acyclic and so $D_2$ has a source, namely $u'$.
Without loss of generality,
we may assume $u' \in [w]^- - [v]^-$.
Then we obtain the digraph $D'$ by deleting the arc $(u',w)$ and adding the arc $(v,w)$ in $D$.
We can check that $D'$ is a $(2k-1,2)$ digraph and the graph induced by $(V(H)-\{u'\}) \cup \{v\}$ of $P(D')$ is isomorphic to $K_{3k}$. 
Thus 
it suffices to consider the case
\[\left\vert[u]^+\right\vert\geq 1.\]

\begin{claim1}
If $v \in [u]^+$ (resp.\ $w \in [u]^+$) and $v \to w$ (resp.\ $w \to v$), then
$N^+_D(v)=\{w\}$ (resp.\ $N^+_D(w)=\{v\}$).
 \end{claim1}
\begin{proof}[Proof of Claim A]
Suppose that $v \in [u]^+$, $v \to w$, and $N^+_D(v)\neq \{w\}$.
Then $v$ has the other out-neighbor $w'$ distinct from $w$.
	By \eqref{eq:thm:ij-general-vertices}, 
$V(H)=[v]^- \cup [w]^- \cup [u]^+$.
Let $D''$ be the digraph obtained from $D$ by deleting the arc $(v,w')$. 
The adjacency of two vertices except $v$ in $V(H)$ does not change in $P(D'')$.
Now take $a \in V(H)-\{v\}$.
Then $ a\in [v]^- \cup [w]^- \cup [u]^+$.
If $a \in [v]^-$, then $(a,v)\in A(D'')$.
If $a \in [w]^-$, then $w$ is a common out-neighbor of $a$ and $v$ in $D''$.
If $ a\in [u]^+$, then $a=w$ and so $(v,a)=(v,w)\in A(D'')$.
Thus, in each case, $v$ is adjacent to $a$ in $P(D'')$.
Hence $P(D'')$ contains $H$.
Since $D'' \neq D$, we reach a contradiction to the property (A).
Thus $N^+_D(v)=\{w\}$. 
Since there is no distinction between $v$ and $w$,
$N^+_D(w)=\{v\}$ if $w \in [u]^+$ and $w \to v$.
Therefore Claim A is true.
\end{proof}

Without loss of generality, we may assume
\[ v \in [u]^+.\]
\begin{claim2}
 If there is no arc between $v$ and $w$, then
 there exists a vertex $v'$ distinct from $v$ and $w$ such that
 \[\left([v]^- -[w]^-\right) \sqcup \left([w]^- -[v]^- -\{v'\}\right) \subseteq N^-_D(v').
 \]
 \end{claim2}
 \begin{proof} [Proof of Claim B]
 Suppose that there is no arc between $v$ and $w$.
 There exists a vertex $w_1$ in $[w]^- - [v]^-$ by \eqref{eq:thm:ij-1-1}.
Suppose $w_1 \to a $ for some $a \in [v]^- - [w]^-$.
Then
$N^+(w_1)=\{a,w\}$.
Since $v \in [u]^+\subseteq V(D_1)$, $w_1$ and $v$ are both in $D_1$ and so are adjacent in $P(D)$.
Then, since $v \not \to a$ and there is no arc between $v$ and $w$,
$w_1$ and $v$ have no common out-neighbor and so $v \to w_1$.
Thus $v \to w_1 \to a \to v$ is a directed cycle, which is impossible.
Hence $N^+_D(w_1) \cap \left([v]^- - [w]^-\right) = \emptyset$.
Since $w_1$ was arbitrarily chosen from $[w]^- - [v]^-$,
we conclude that 
\begin{equation}\label{eq:thm:ij-1-z}
	N^+_D(z) \cap \left([v]^- - [w]^-\right) = \emptyset
\end{equation}
for each $z$ in $[w]^- - [v]^-$.
Take a vertex $v_1$ in $[v]^- - [w]^-$.
If $N^+_D(v_1) = \{v\}$, then $v_1$ is not adjacent to any vertex in $[w]^- - [v]^-$ by \eqref{eq:thm:ij-1-z} (note that $v\not \to w$).
Thus $N^+_D(v_1)=\{v,v'\}$ for some vertex $v'$ distinct from $v$.
Then $v' \neq w$.
Take a vertex $w_1$ in $[w]^- - [v]^- - \{v'\}$.
Then $v_1$ and $w_1$ are adjacent.
By \eqref{eq:thm:ij-1-z}, $w_1 \not \to v_1$.
Since $w_1 \not \to v$, $w_1 \to v'$.
Since $w_1$ was arbitrarily chosen from $[w]^- - [v]^- - \{v'\}$,
$v'$ is an out-neighbor for each vertex in $[w]^- - [v]^- - \{v'\}$.
Thus
\begin{equation}\label{eq:thm:ij-1-z_1}
\{v_1\} \sqcup \left([w]^- - [v]^- - \{v'\} \right)\subseteq N^-_D(v').
\end{equation}
Suppose $v_2 \not \to v'$ for some $v_2$ in $[v]^- - [w]^- - \{v_1\}$.
There exists a vertex $w_2$ in $[w]^- - [v]^- - \{v'\}$ by \eqref{eq:thm:ij-1-1}.
Then $w_2 \to w$. Moreover, by \eqref{eq:thm:ij-1-z_1}, $w_2 \to v'$.  
Thus $N^+(w_2)=\{v',w\}.$
Hence $v_2$ and $w_2$ have no common prey. 
Then, since $v_2$ and $w_2$ are adjacent and $w_2 \not \to v_2$ by \eqref{eq:thm:ij-1-z}, $v_2 \to w_2$ and so $N^+_D(v_2)=\{v,w_2\}$.
If $v' \in [w]^- - [v]^-$, then $v' \not \to v_2$ by \eqref{eq:thm:ij-1-z} and so $v'$ and $v_2$ are not adjacent, which is impossible.
Thus $v' \not \in [w]^- - [v]^-$.
Then there exists a vertex $w_3$ in $[w]^- - [v]^- - \{w_2\}$ by \eqref{eq:thm:ij-1-1}.
Since $v' \not \in [w]^- - [v]^-$, $w_3 \neq v'$.
In addition, $v_2 \not \to w_3$ and $w_3 \not \to v_2$.
Since $v_2$ and $w_3$ are adjacent, $w_3 \to w_2$ and so, by \eqref{eq:thm:ij-1-z_1}, $\{v',w,w_2\}\subseteq N^+_D(w_3)$, which is impossible.
Thus there exists an arc from any vertex in $[v]^- - [w]^- - \{v_1\}$
to $v'$, that is, $[v]^- - [w]^- - \{v_1\} \subseteq N^-_D(v').$
 Hence, by \eqref{eq:thm:ij-1-z_1},
 \[\left([v]^- - [w]^- \right) \sqcup \left([w]^- - [v]^- - \{v'\}\right) \subseteq N^-_D(v').\qedhere \]
\end{proof}

{\it Case 1}. $|[u]^+|=2$.
Then $[u]^+=\{v,w\}$.
If $w \to y$ for some $y \in [v]^-$ and
$v \to z$ for some $z \in [w]^-$, then
$w \to y \to v \to z \to w$ is a closed directed  walk, which is impossible.
Thus either
$w \not \to y$ for any $y \in [v]^-$  
or $v \not \to z$ for any $z \in [w]^-$.
Since $[u]^+=\{v,w\}$, we may assume, without loss of generality, that
\begin{equation}\label{eq:thm:ij-1-w_not_y}
	w \not \to y
\end{equation}
for any $y \in [v]^-$.
There exists a vertex $v_1$ in $[v]^- -[w]^- -\{w\}$ by \eqref{eq:thm:ij-1-1}.
Then $w \not \to v_1$, $v_1 \not \to w$, and $w$ and $v_1$ are adjacent in $P(D)$.
Therefore there exists a common out-neighbor $v^*$ of $w$ and $v_1$.

{\it Subcase 1-1}. $w \to v$. Then, by Claim A, $N^+(w)=\{v\}$ and so $v^*=v$.
Thus
\[
v \in N^+_D(v_1).
\]
Since $w \to v$, \[v \not \to w.\]
If $ v \to z$ for some $z \in [w]^-$,
then $v \to z \to w \to v$ is a directed cycle.
Thus
\begin{equation} \label{eq:thm:ij-1_v_not_to_z}
v \not \to z
\end{equation}
 for any $z \in [w]^-$.
There exists a vertex $w_1$ in $[w]^- -[v]^-$ by \eqref{eq:thm:ij-1-1}.
Then $v \not \to w_1$, $w_1 \not \to v $, and $v$ and $w_1$ are adjacent.
Therefore there exists a common out-neighbor $w^*$ of $v$ and $w_1$.
Thus $w^* \not \in [v]^-$ and $w^* \neq w$. Hence $N^+_D(w_1)=\{w,w^*\}$.
In addition,
$w^* \not\in [w]^-$ by \eqref{eq:thm:ij-1_v_not_to_z} and so
\[ w^* \not \in [v]^- \cup [w]^-.\]
Since $v_1$ and $w_1$ are adjacent and $w_1 \not \to v_1$,
$v_1 \to w^*$ or $v_1 \to w_1$.
By the way, there exists a vertex $w_2$ in $[w]^- -[v]^- -\{w_1\}$ by \eqref{eq:thm:ij-1-1}.
Since $v \not \to w_2$ and $w_2 \not \to v$,
$v$ and $w_2$ have a common out-neighbor $w^{**}$.
Then $w^{**} \notin [w]^-$ by \eqref{eq:thm:ij-1_v_not_to_z}. 
In addition, $w^* \not \in [v]^-$ and $w^{**} \neq w$.
Then $N^+_D(w_2)=\{w,w^{**}\}$.

Suppose $v_1 \to w_1$.
Then $N^+_D(v_1)=\{v,w_1\}$.
Since $v_1$ and $w_2$ are adjacent, $w^{**}=w_1$ and so $w^{**} \in [w]^-$, which is impossible.
Thus $v_1 \to w^*$ and so $N^+_{D}(v_1)=\{v,w^*\}$.
Since $v_1$ and $w_2$ are adjacent and $w^*\neq w_2$, we conclude $w^*=w^{**}$.
Since $w_2$ was arbitrarily chosen from $[w]^- -[v]^- -\{w_1\}$,
\[ [w]^- -[v]^- \subseteq N^-_D(w^*).\]
Moreover, $v_1 \in N^-_D(w^*)$.
Since $v_1$ was arbitrary chosen from $[v]^- - [w]^- -\{w\}$, 
\[ [v]^- -[w]^- - \{w\} \subseteq N^-_D(w^*).\]
Since $w \to v$, 
\begin{equation}\label{eq:thm:ij-1_w_v}
v \notin [w]^- - [v]^-.
\end{equation}
Then, since $v \in N^-_D(w^*)$,
\[ \left( [v]^- -[w]^- - \{w\} \right) \sqcup  \left( [w]^- -[v]^- \right)  \sqcup  \{v\} \subseteq N^-_D(w^*).\]

Let $s= |[v]^- -[w]^- -\{w\}|$ and $t= |[w]^- -[v]^-|$.
Since $w \to v$, \begin{equation}\label{eq:thm:ij-1_w_v2}
 	w \in [v]^- - [w]^-. 
 \end{equation}
Then, by \eqref{eq:thm:ij-1-twin} and \eqref{eq:thm:ij-1},
$s \geq k-2$ and $t \geq k-1$.
Then, since $|N^-_D(w^*)|\leq 2k-1$, \[(s,t) \in \left\{(k-2,k-1),(k-1,k-1),(k-2,k)\right\}. \]
and so $s+t \leq 2k-2$.
Then, since $[u]^+=\{v,w\}$, \[[v]^- \triangle [w]^- - [u]^+= ([v]^- - [w]^- - \{w\}) \sqcup ([w]^- - [v]^-)\] by \eqref{eq:thm:ij-1_w_v} and \eqref{eq:thm:ij-1_w_v2}, and so $|[v]^- \triangle [w]^- - [u]^+|=s+t$.
Accordingly, by \eqref{eq:thm:ij-general},
\begin{equation}\label{eq:thm:ij-1-v1_new}
	\left\vert[v]^- \cap [w]^-\right\vert=3k-(s+t+2) \geq 3k-2k=k.
\end{equation}
Thus
$|[v]^- \cap [w]^-| \geq k$.
Hence
\[2k-1\geq \left\vert N^-_D(w)\right\vert \geq \left\vert[w]^- -[v]^-\right\vert + \left\vert[v]^- \cap [w]^-\right\vert \geq k-1+ k =2k-1\]
and so $|N^-_D(w)|=2k-1$.
Therefore $t=|[w]^- -[v]^-|=k-1$ and $|[v]^- \cap [w]^-|=k$.
Then $s+t=2k-2$ by~\eqref{eq:thm:ij-1-v1_new} and so $s=|[v]^- -[w]^- -\{w\}|=k-1$.
Further, since $\{w\} \sqcup([v]^- -[w]^- -\{w\}) \sqcup ([v]^- \cap [w]^-)  \subseteq N^-_D(v)$, 
\[  \left\vert\{w\} \right\vert+\left\vert[v]^- -[w]^- -\{w\}\right\vert +\left\vert[v]^- \cap [w]^-\right\vert=1+(k-1)+k=2k  \leq  \left\vert N^-_D(v) \right\vert, \]
which is impossible. Therefore Subcase 1-1 cannot happen, i.e. $w \not \to v$. 

{\it Subcase 1-2}. $v \to w$. Then, by Claim A, $N^+_D(v)=\{w\}$.
Thus $v \not \to z$ for any $z \in [w]^-$.
Hence, by \eqref{eq:thm:ij-1-w_not_y}, the argument obtained by replacing $v$ with $w$ and adjusting other vertices based upon the replacement in the argument for Subcase 1-1 may be applied to reach a contradiction.

{\it Subcase 1-3}. There is no arc between $v$ and $w$.
Then, by Claim B, there exists a vertex $v'$ such that
\[\left([v]^- - [w]^- \right) \sqcup \left( [w]^- - [v]^-  - \{v'\} \right)\subseteq N^-_D(v').\]
Then $v_1 \to v'$.
Since $N^+_D(v_1)=\{v,v'\}$, 
$v'=v^*$.
Since $w \to v^*$, $v^* \not \in [w]^- - [v]^-$
and so
\[\left([v]^- - [w]^- \right) \sqcup \left( [w]^- - [v]^- \right)  \sqcup \{w \} \subseteq N^-_D(v^*).\]
Then, since $|N^-_D(v^*)|\leq 2k-1$, $|[v]^- - [w]^-|=| [w]^- - [v]^-|=k-1$ by \eqref{eq:thm:ij-1-twin} and \eqref{eq:thm:ij-1}.
Hence 
\[\left([v]^- - [w]^- \right) \sqcup \left( [w]^- - [v]^- \right)  \sqcup \{w\} = N^-_D(v^*).\]
Then $v \not \to v^*$.
Take a vertex $z$ in $[w]^- - [v]^-$.
Therefore $N^+_D(z)=\{v^*,w\}$.
Then, since $v \not \to w$, $v$ and $z$ have no common out-neighbor.
Since $v$ is adjacent to $z$,
$z \to v$ or $v \to z$.
Thus $z \in N^+_D(v)$.
Since $z$ was arbitrarily chosen from $[w]^- - [v]^-$,
$[w]^- - [v]^- \subseteq N^+_D(v)$ and so, by \eqref{eq:thm:ij-1-1}, $N^+_D(v)=[w]^- - [v]^-$.
Thus $v$ and $w$ have no common out-neighbor.
Moreover, since $w \not \to v$ and $v \not \to w$,
$v$ and $w$ are not adjacent, which is a contradiction.

{\it Case 2}. $|[u]^+|\neq 2$.
Then, by the above assumption,
\[[u]^+=\{v\}.\]
Thus $w \not \in V(H)$ and so, by the property (B), \[N^+_D(w)=\emptyset.\]
Suppose $v \not \to w$.
Then, by Claim B, there exists a vertex $v'$ such that
\[
 \left([v]^- -[w]^-\right) \sqcup \left([w]^- -[v]^- -\{v'\}\right) \subseteq N^-_D(v').
\]
Then
\[  k + k-1\leq \left\vert([v]^- -[w]^-) \right\vert + \left\vert \left([w]^- -[v]^- -\{v'\}\right) \right\vert \leq \left\vert N^-_D(v')\right\vert \leq 2k-1\]
by \eqref{eq:thm:ij-1-twin} and \eqref{eq:thm:ij-1}.
Thus 
$|[v]^- -[w]^-|=k$, $|[w]^- -[v]^- -\{v'\}|=k-1$,
\begin{equation}\label{eq:thm:ij-1-v1_new-1}
v' \in [w]^- - [v]^-,
\end{equation}
and
\begin{equation}\label{eq:thm:ij-1-v1_new-2}
 \left( [v]^- -[w]^-\right) \sqcup \left([w]^- -[v]^- -\{v'\}\right) = N^-_D(v').
\end{equation}
Then, since $v \not \in [w]^-$, $v \not \in N^-_D(v')$ by \eqref{eq:thm:ij-1-v1_new-2} and so $v \not \to v'$.
Since $|[w]^- - [v]^- -\{v'\}| = k-1\geq 2$,
there are two vertices $w_1$ and $w_2$ in $[w]^- - [v]^- - \{v'\}$.
Then $N^+_D(w_1)=N^+_D(w_2)=\{w,v'\}$ and so each of $w_1$ and $w_2$ shares no out-neighbor with $v$.
Therefore $N^+_D(v)=\{w_1,w_2\}$.
Then, since $\{w_1,w_2 \} \subseteq [w]^- -[v]^- -\{v'\}$,
$v$ and $v'$ have no common out-neighbor by \eqref{eq:thm:ij-1-v1_new-2}.
In addition, since $v' \in [w]^--[v]^-$ by \eqref{eq:thm:ij-1-v1_new-1}, $v' \not \to v$.
Hence $v$ and $v'$ are not adjacent, which is impossible.
Consequently, we have shown 
\[ v \to w.\]
Then, by Claim A, \[N^+_D(v)=\{w\}.\]

Let $D_3$ be the subdigraph of $D$ induced by $[v]^- \bigtriangleup [w]^- - \{v\}$. 
Since $D_3$ is acyclic, $D_3$ has a source, say $x$.
Then $x \in [v]^- - [w]^-$ or $x \in [w]^- - [v]^-$.

Then we claim the following
\begin{claim3}
$x\in [v]^- - [w]^-$ and there exists an out-neighbor $x^*$ of $x$ such that
$[v]^- \bigtriangleup [w]^- -\{v,x^*\} = N^-_D(x^*)$.
\end{claim3}
\begin{proof}[Proof of Claim C]

Let $\alpha$ denote a vertex in $\{v,w\}$ with $x \in [\alpha]^- - [\beta]^-$ where $\beta$ denotes the vertex in $\{v,w\}-\{\alpha\}$.
If $N^+_D(x)=\{\alpha\}$, then $x$ cannot be adjacent to any vertex in $[\beta]^- - [\alpha]^- -\{\alpha\}$ since $x$ is a source in $D_3$.
Thus $N^+_D(x) \neq \{\alpha\}$ and so $N^+_D(x)=\{\alpha,x^*\}$ for some $x^*$. 

To show $N^-_D(x^*) \subseteq   [v]^- \bigtriangleup [w]^- -\{v,x^*\}$, take $b \in N^-_D(x^*)$. 
Then $b \in V(H)$ by the property (B). 
Moreover, $b \neq x^*$.
If $b \in [v]^- \cap [w]^-$, then $N^+_D(b)=\{v,w,x^*\}$, which is impossible.
Therefore $b \not \in [v]^- \cap [w]^-$.
By \eqref{eq:thm:ij-general-vertices},
$b\in [v]^- \bigtriangleup [w]^- -[u]^+$ or $b\in [u]^+$.
If $b \in [u]^+$, then $b=v$ and so $N^+_D(b)=N^+_D(v)=\{w\}$, which implies $b \not \in N^-_D(x^*)$.
Thus
$b\in [v]^- \bigtriangleup [w]^- -[u]^+$.
Then, since $b \neq x^*$, $b\in [v]^- \bigtriangleup [w]^- -\{v,x^*\}$ and so \begin{equation}\label{eq:thm:ij-beta_new3}
N^-_D(x^*) \subseteq   [v]^- \bigtriangleup [w]^- -\{v,x^*\}.
\end{equation}

Since $x \in [\alpha]^- - [\beta]^-$, $x^* \neq \beta$.
Moreover, since $x$ is a source in $D_3$ and $x$ is adjacent to any vertex in $[\beta]^- - [\alpha]^- -\{\alpha,x^*\}$,
$x^*$ is an out-neighbor of any vertex in $[\beta]^- - [\alpha]^- -\{\alpha,x^*\}$.
Thus
\begin{equation} \label{eq:thm:ij-beta_new}
 [\beta]^- - [\alpha]^- -\{\alpha,x^*\}  \subseteq N^-_D(x^*)
 \end{equation}
and 
\begin{equation} \label{eq:thm:ij-beta}
N^+_D(z)=\{x^*,\beta \}	
\end{equation}
for each $z$ in $[\beta]^- - [\alpha]^- -\{\alpha,x^*\}$.

To show $[\alpha]^- - [\beta]^- -\{\alpha,x^*\}\subseteq N^-_D(x^*)$,
we note that $[\alpha]^- - [\beta]^- -\{\alpha,x^*\}
=[\alpha]^- - [\beta]^- -\{x^*\}$.
Take a vertex $y_1$ in $[\alpha]^- - [\beta]^- -\{x^*\}$.
To the contrary, suppose $y_1\notin N^-_D(x^*)$, i.e. $y_1 \not \to x^*$.
Then, since $x \to x^*$, $y_1\neq x$.
Take $z_1$ in $[\beta]^- - [\alpha]^- -\{\alpha,x^*\}$.
Such a vertex exists since $|[\beta]^- - [\alpha]^- -\{\alpha,x^*\}| \geq 1$ by \eqref{eq:thm:ij-1-twin} and \eqref{eq:thm:ij-1}.
Then $y_1$ and $z_1$ are adjacent, so
$y_1 \to z_1$ by \eqref{eq:thm:ij-beta}.
Therefore $N^+(y_1)=\{\alpha,z_1\}$.
Since $z_1$ was arbitrarily chosen from $[\beta]^- - [\alpha]^- -\{\alpha,x^*\}$, $[\beta]^- - [\alpha]^- -\{\alpha,x^*\}\subseteq N^+(y_1)$ and so $[\beta]^- - [\alpha]^- -\{\alpha,x^*\}=\{z_1\}$.
Thus $[\beta]^- - [\alpha]^- \subseteq \{\alpha,x^*,z_1\}$.
Since $|[\beta]^- - [\alpha]^-|\geq k \geq 3$ by \eqref{eq:thm:ij-1-twin} and \eqref{eq:thm:ij-1},
$[\beta]^- - [\alpha]^- =\{\alpha,x^*,z_1\}$.
Then $x^*$ and $y_1$ are adjacent.
Since $y_1 \not \to x^*$ and $x^*$ and $y_1$ share no out-neighbor,
$x^* \to y_1$.
Thus $y_1 \to z_1 \to x^* \to y_1$ is a directed cycles, which is impossible.
Hence $y_1 \to x^*$.
Since $y_1$ was arbitrarily chosen from $[\alpha]^- - [\beta]^- -\{x^*\}$,
\[
[\alpha]^- - [\beta]^- -\{\alpha,x^*\} =[\alpha]^- - [\beta]^- -\{x^*\}  \subseteq N^-_D(x^*).
\]
Then
\[
[v]^- \bigtriangleup [w]^- -\{\alpha,x^*\} \subseteq N^-_D(x^*)\] by \eqref{eq:thm:ij-beta_new}.
Since $v \in [v]^- \bigtriangleup[w]^-$, $\alpha=v$  and $[v]^- \bigtriangleup [w]^- -\{v,x^*\} = N^-_D(x^*)$ by \eqref{eq:thm:ij-beta_new3}.
\end{proof}

Since $v\to w$ and $\{v\}=[u]^+ \subseteq [v]^- \bigtriangleup [w]^-$, 
\[\left([v]^- \bigtriangleup [w]^-\right)\sqcup \left([v]^- \cap [w]^-\right)= V(H)\] by \eqref{eq:thm:ij-general-vertices}.
Thus, by Claim C,
\begin{equation} \label{eq:thm:ij-x_last}	
N^-_D(x^*) \sqcup \left(([v]^- \bigtriangleup [w]^-) \cap \{v,x^*\}\right) \sqcup \left([v]^- \cap [w]^-\right)= V(H).
\end{equation}
Then, since $|N^-_D(x^*)|\leq 2k-1$ and $|V(H)|=3k$,
we conclude \begin{equation} \label{eq:thm:ij-x_last_1}
\left\vert[v]^- \cap [w]^- \right\vert \geq k-1.
\end{equation}

By the way, since $[u]^+=\{v\}$ and $v \in [w]^-$,
$|[v]^- \cup [w]^-|=3k$ by \eqref{eq:thm:ij-general-vertices}.
Then, since $|[u]^+|=1$, 
\begin{equation}\label{eq:thm:ij-kk}
\left\vert[v]^- - [w]^-\right\vert \geq k\quad \text{and} \quad\left\vert [w]^- - [v]^-\right\vert \geq k
\end{equation}
by \eqref{eq:thm:ij-1-twin} and \eqref{eq:thm:ij-1}.
Therefore
 $|[v]^- \bigtriangleup [w]^-|\geq 2k$.
 Since $|[v]^- \cup [w]^-|=3k$,
 \[\left \vert [v]^- \cap [w]^- \right  \vert= \left \vert [v]^- \cup [w]^-  \right \vert -  \left \vert [v]^- \bigtriangleup [w]^-\right  \vert \leq 3k-2k=k.\] 
If $|[v]^- \cap [w]^-|=k$, then, by \eqref{eq:thm:ij-kk}, $|[v]^-|=|[v]^- \cap [w]^-|+|[v]^- - [w]^-|\geq 2k$, which is impossible.
Suppose $|[v]^- \cap [w]^-|=k-1$.
Then $|[v]^- \bigtriangleup [w]^-|=2k+1$ and so  \[\left \vert [v]^- - [w]^-\right\vert =k+1 \quad \text{or} \quad \left\vert[w]^- - [v]^-\right\vert=k+1\] 
Hence $|[v]^-|=|[v]^- \cap [w]^-|+|[v]^- - [w]^-|\geq 2k$ or $|[w]^-|= |[v]^- \cap [w]^-|+|[w]^- - [v]^-| \geq 2k$, which is impossible.
Therefore $|[v]^- \cap [w]^-|\leq k-2$, which contradicts \eqref{eq:thm:ij-x_last_1}.
Thus we have shown that there is no $(2k-1,2)$ digraph $D$ whose phylogeny graph contains an induced subgraph isomorphic to $K_{3k}$ and so we conclude $\omega(G) \leq \frac{3i}{2}+1$.

To show that the inequality is tight,
we present a $(2k,2)$ digraph and a $(2k+1,2)$ digraph each of whose phylogeny graphs contains $K_{3k+1}$ and $K_{3k+2}$ as an induced subgraph, respectively, for any integer $k \geq 2$.
Fix an integer $k\geq 2$.
Let $D_1$ be a $(2k,2)$ digraph with
\[V(D_1)=\{u,v,w,x_1,x_2,\ldots,x_{2k-1},y_1\ldots,y_{k-1},z\} \]
and
\begin{align*}
A(D_1)=&\{(u,v),(u,w),(v,w),(w,z)\} \cup \bigcup_{i=1}^{2k-1} \{ (x_i,v) \}\\
 &\cup \bigcup_{i=1}^{k-1} \{(y_i,w),(y_i,z)\} \cup \bigcup_{i=1}^{k-1} \{ (x_i,w)\} \cup \bigcup_{i=k}^{2k-1} \{ (x_i,z) \}
\end{align*}
(see the $(4,2)$ digraph given in Figure \ref{fg:4,2digraph} for an illustration).
In the following, we show that
$V(D_1)- \{z\}$ forms a clique of size $3k+1$ in $P(D_1)$.
We note that
\[ N^+(u)=\{v,w\}, \quad v \in N^+(u) \cap N^+(x_i), \quad \text{and} \quad w \in N^+(u) \cap N^+(y_j) \]
 for each $1\leq i \leq 2k-1$ and $1\leq j \leq k-1$.
Therefore,
$u$ is adjacent to the vertices in $V(D_1)- \{u,z\}$.
We can check that
\[ N^+(v)=\{w\}, \quad v\in N^+(x_i), \quad \text{and} \quad w \in N^+(v) \cap N^+(y_j) \]
for each $1\leq i \leq 2k-1$ and $1\leq j \leq k-1$.
Therefore, $v$ is adjacent to the vertices in $V(D_1)- \{v,z\}$.
Since
\[ w \in N^+(x_i) \cap N^+(y_i) \quad \text{and} \quad z \in N^+(w) \cap N^+(x_j) \]
for each $1\leq i \leq k-1$ and $k\leq j \leq 2k-1$,
$w$ is adjacent to the vertices in $V(D_1)-\{w\}$.
Take $x_i$ for some $i \in \{1,\ldots, 2k-1\}$.
Since $\{x_1,\ldots,x_{2k-1} \} \subseteq N^-(v)$,
$\{x_1,\ldots,x_{2k-1} \}$ forms a clique.
If $1\leq i \leq k-1$, then $x_i \to w$ and so $w$ is a common out-neighbor of $x_i$ and $y_j$ for each $1\leq j \leq k-1$.
If $k\leq i \leq 2k-1$, then $x_i \to z$ and so $z$ is a common out-neighbor of $x_i$ and $y_j$ for each $1\leq j \leq k-1$.
Therefore $x_i$ is adjacent to the vertices in $\{y_1,\ldots,y_{k-1}\}$.
Thus $x_i$ is adjacent to the vertices in $V(D_1)- \{x_i,z\}$.
Since $\{y_1,\ldots,y_{k-1}\} \subseteq N^-(w)$,
$\{y_1,\ldots,y_{k-1}\}$ forms a clique.
Therefore we have shown that $V(D_1)- \{z\}$ forms a clique in $P(D_1)$.
Then, by Lemma~\ref{lem:expanding_complete},
we obtain a $(2k+1,2)$ digraph whose phylogeny graph contains an induced subgraph isomorphic to $K_{3k+2}$.
Hence we have shown that the inequality is tight.
 \end{proof}

\begin{figure}
\begin{center}
  \begin{tikzpicture}[auto,thick, scale=0.8]
    \tikzstyle{player}=[minimum size=5pt,inner sep=0pt,outer sep=0pt,fill,color=black, circle]
    \tikzstyle{player2}=[minimum size=2pt,inner sep=0pt,outer sep=0pt,fill,color=black, circle]
    \tikzstyle{source}=[minimum size=5pt,inner sep=0pt,outer sep=0pt,ball color=black, circle]
    \tikzstyle{arc}=[minimum size=5pt,inner sep=1pt,outer sep=1pt, font=\footnotesize]
    \path (1.5,1.5)   node [player]  (v1) [label=right:] {};
  \path (1.5,0)   node [player]  (v2) [label=right:] {};
  \path (0,1.5)   node [player]  (v3) [label=right:] {};
  \path (0,0)   node [player]  (v4) [label=right:] {};
   \path (2.5,1.3)   node [player]  (x) [label=right:] {};
  \draw[black,thick,-stealth] (v1) to (v2);
  \draw[black,thick,-stealth] (v1) to (v3);
  \draw[black,thick,-stealth] (v4) to (v2);
  \draw[black,thick,-stealth] (v3) to (v4);
  \draw[black,thick,-stealth] (v3) to  [in=110, out=65, distance=0.9cm] (x);
  \draw[black,thick,-stealth] (v2) to  [in=-90, out=0, distance=0.5cm] (x);
    \end{tikzpicture}
\hspace{3em}
  \begin{tikzpicture}[auto,thick,scale=0.8]
    \tikzstyle{player}=[minimum size=5pt,inner sep=0pt,outer sep=0pt,fill,color=black, circle]
    \tikzstyle{player2}=[minimum size=2pt,inner sep=0pt,outer sep=0pt,fill,color=black, circle]
    \tikzstyle{source}=[minimum size=5pt,inner sep=0pt,outer sep=0pt,ball color=black, circle]
    \tikzstyle{arc}=[minimum size=5pt,inner sep=1pt,outer sep=1pt, font=\footnotesize]
    \path (1.5,1.5)   node [player]  (v1) [label=right:] {};
  \path (1.5,0)   node [player]  (v2) [label=right:] {};
  \path (0,1.5)   node [player]  (v3) [label=right:] {};
  \path (0,0)   node [player]  (v4) [label=right:] {};
   \path (2.5,1.3)   node [player]  (x) [label=right:] {};
\draw (v1) -- (v2);
\draw (v1) -- (v4);
\draw (v2) -- (v3);
  \draw (v1) -- (v3);
  \draw (v4) --(v2);
  \draw (v3) -- (v4);
 \draw (v3) to [bend left=60 ]  (x);
    \draw (v2) to [bend right=30 ]  (x);
    \end{tikzpicture}
    \end{center}
\caption{A (2,2) digraph and its phylogeny graph}
\label{fg:2,2digraph}
 \end{figure}
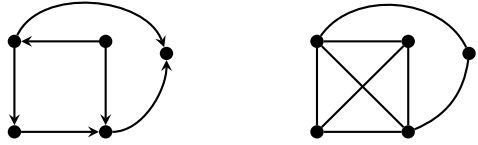

\begin{figure}
\begin{center}
  \begin{tikzpicture}[auto,thick]
    \tikzstyle{player}=[minimum size=5pt,inner sep=0pt,outer sep=0pt,fill,color=black, circle]
    \tikzstyle{player2}=[minimum size=2pt,inner sep=0pt,outer sep=0pt,fill,color=black, circle]
    \tikzstyle{source}=[minimum size=5pt,inner sep=0pt,outer sep=0pt,ball color=black, circle]
    \tikzstyle{arc}=[minimum size=5pt,inner sep=1pt,outer sep=1pt, font=\footnotesize]
    \path (0:1.5cm)    node [player]  (v2) [label=right:] {};
  \path (60:1.5cm)   node [player]  (v1) [label=right:] {};
  \path (120:1.5cm)  node [player]  (v6) [label=right:] {};
  \path (180:1.5cm)   node [player]  (v5) [label=right:] {};
   \path (240:1.5cm)   node [player]  (v4) [label=right:] {};
      \path (300:1.5cm)   node [player]  (v3) [label=right:] {};
      \path (60:2.5cm)   node [player]  (x) [label=right:] {};
      \path (120:2.5cm)   node [player]  (y) [label=right:] {};
  \draw[black,thick,-stealth] (v1) to (v2);
  \draw[black,thick,-stealth] (v1) to (y);
\draw[black,thick,-stealth] (v2) to (x);
\draw[black,thick,-stealth] (v2) to (v5);
\draw[black,thick,-stealth] (v3) to (v2);
\draw[black,thick,-stealth] (v3) to (v6);
\draw[black,thick,-stealth] (v4) to (v2);
\draw[black,thick,-stealth] (v5) to (y);
\draw[black,thick,-stealth] (v5) to (v6);
\draw[black,thick,-stealth] (v4) to (v6);
\draw[black,thick,-stealth] (v6) to (y);
\draw[black,thick,-stealth] (v6) to (x);
    \end{tikzpicture}
\hspace{3em}
   \begin{tikzpicture}[auto,thick]
    \tikzstyle{player}=[minimum size=5pt,inner sep=0pt,outer sep=0pt,fill,color=black, circle]
    \tikzstyle{player2}=[minimum size=2pt,inner sep=0pt,outer sep=0pt,fill,color=black, circle]
    \tikzstyle{source}=[minimum size=5pt,inner sep=0pt,outer sep=0pt,ball color=black, circle]
    \tikzstyle{arc}=[minimum size=5pt,inner sep=1pt,outer sep=1pt, font=\footnotesize]
    \path (0:1.5cm)    node [player]  (v2) [label=right:] {};
  \path (60:1.5cm)   node [player]  (v1) [label=right:] {};
  \path (120:1.5cm)  node [player]  (v6) [label=right:] {};
  \path (180:1.5cm)   node [player]  (v5) [label=right:] {};
   \path (240:1.5cm)   node [player]  (v4) [label=right:] {};
      \path (300:1.5cm)   node [player]  (v3) [label=right:] {};
      \path (60:2.5cm)   node [player]  (x) [label=right:] {};
      \path (120:2.5cm)   node [player]  (y) [label=right:] {};
  \draw (v1) -- (v2);
  \draw (v1) -- (y);
  \draw (v1) -- (v3);
  \draw (v1) -- (v4);
  \draw (v1) -- (v5);
  \draw (v1) -- (v6);
\draw (v2) -- (x);
\draw (v2)-- (v5);
\draw (v2)-- (v6);
\draw (v3) -- (v2);
\draw (v3)-- (v6);
\draw (v3) -- (v4);
\draw (v3)-- (v5);
\draw (v4) -- (v2);
\draw (v4) -- (v5);
\draw (v5) --(y);
\draw (v5) -- (v6);
\draw (v4) -- (v6);
\draw (v6) -- (y);
\draw (v6) -- (x);
    \end{tikzpicture}
    \end{center}
\caption{A (3,2) digraph and its phylogeny graph}
\label{fg:3,2digraph}
 \end{figure}

 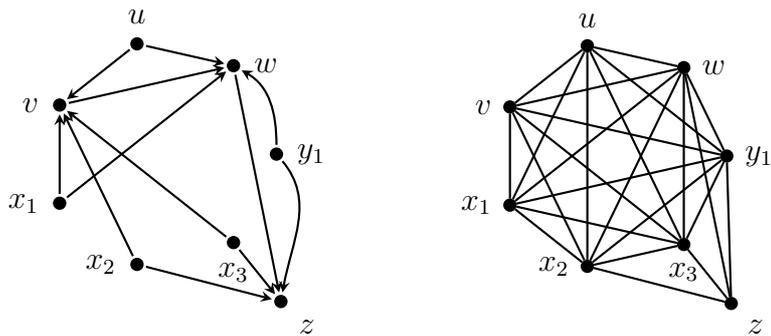
\begin{figure}
\begin{center}
  \begin{tikzpicture}[auto,thick]
    \tikzstyle{player}=[minimum size=5pt,inner sep=0pt,outer sep=0pt,fill,color=black, circle]
    \tikzstyle{player2}=[minimum size=2pt,inner sep=0pt,outer sep=0pt,fill,color=black, circle]
    \tikzstyle{source}=[minimum size=5pt,inner sep=0pt,outer sep=0pt,ball color=black, circle]
    \tikzstyle{arc}=[minimum size=5pt,inner sep=1pt,outer sep=1pt, font=\footnotesize]
  \tikzstyle{player}=[minimum size=5pt,inner sep=0pt,outer sep=1pt,fill,color=black, circle]
    \tikzstyle{player2}=[minimum size=2pt,inner sep=0pt,outer sep=0pt,fill,color=black, circle]
    \tikzstyle{source}=[minimum size=5pt,inner sep=0pt,outer sep=0pt,ball color=black, circle]
    \tikzstyle{arc}=[minimum size=5pt,inner sep=1pt,outer sep=1pt, font=\footnotesize]
    \path (0:1.5cm)    node [player]  (v1) [label=right:$y_1$] {};
  \path (360/7:1.5cm)   node [player]  (v2) [label=right:$w$] {};
  \path (360 * 2/7:1.5cm)  node [player]  (v3) [label=above:$u$] {};
  \path (360 * 3/7:1.5cm)   node [player]  (v4) [label=left:$v$] {};
   \path (360 * 4/7:1.5cm)   node [player]  (v5) [label=left:$x_1$] {};
      \path (360 * 5/7:1.5cm)   node [player]  (v6) [label=left:$x_2$] {};
           \path (360 * 6/7:1.5cm)   node [player]  (v7) [label=below:$x_3$] {};
      \path (360 * 6/7:2.5cm)   node [player]  (x) [label=below right:$z$] {};
  \draw[black,thick,-stealth] (v1) to [in=-30,out=90] (v2);
 \draw[black,thick,-stealth] (v1) to [in=80,out=-50](x);

  \draw[black,thick,-stealth] (v2) to (x);

  \draw[black,thick,-stealth] (v3) to (v2);
  \draw[black,thick,-stealth] (v3) to (v4);

  \draw[black,thick,-stealth] (v4) to (v2);

  \draw[black,thick,-stealth] (v5) to (v4);
\draw[black,thick,-stealth] (v5) to (v2);

\draw[black,thick,-stealth] (v6) to (v4);
\draw[black,thick,-stealth] (v6) to (x);

\draw[black,thick,-stealth] (v7) to (v4);
\draw[black,thick,-stealth] (v7) to (x);
    \end{tikzpicture}
\hspace{3em}
   \begin{tikzpicture}[auto,thick]
    \tikzstyle{player}=[minimum size=5pt,inner sep=0pt,outer sep=0pt,fill,color=black, circle]
    \tikzstyle{player2}=[minimum size=2pt,inner sep=0pt,outer sep=0pt,fill,color=black, circle]
    \tikzstyle{source}=[minimum size=5pt,inner sep=0pt,outer sep=0pt,ball color=black, circle]
    \tikzstyle{arc}=[minimum size=5pt,inner sep=1pt,outer sep=1pt, font=\footnotesize]
    \path (0:1.5cm)    node [player]  (v1) [label=right:$y_1$] {};
  \path (360/7:1.5cm)   node [player]  (v2) [label=right:$w$] {};
  \path (360 * 2/7:1.5cm)  node [player]  (v3) [label=above:$u$] {};
  \path (360 * 3/7:1.5cm)   node [player]  (v4) [label=left:$v$] {};
   \path (360 * 4/7:1.5cm)   node [player]  (v5) [label=left:$x_1$] {};
      \path (360 * 5/7:1.5cm)   node [player]  (v6) [label=left:$x_2$] {};
           \path (360 * 6/7:1.5cm)   node [player]  (v7) [label=below:$x_3$] {};
      \path (360 * 6/7:2.5cm)   node [player]  (x) [label=below right:$z$] {};
  \draw (v1) -- (v2);
  \draw (v1) -- (v3);
  \draw (v1) -- (v4);
  \draw (v1) -- (v5);
  \draw (v1) -- (v6);
  \draw (v1) -- (v7);
  \draw (v1) -- (x);
\draw (v2) -- (x);
\draw (v2)-- (v5);
\draw (v2)-- (v6);
\draw (v3) -- (v2);
\draw (v3)-- (v6);
\draw (v3) -- (v4);
\draw (v3)-- (v5);
\draw (v4) -- (v2);
\draw (v4) -- (v5);

\draw (v5) -- (v6);
\draw (v4) -- (v6);
\draw (v6) -- (x);

\draw (v7) -- (v2);
\draw (v7) -- (v3);
\draw (v7) -- (v4);
\draw (v7) -- (v5);
\draw (v7) -- (v6);
\draw (v7) -- (x);
    \end{tikzpicture}
    \end{center}
\caption{A (4,2) digraph and its phylogeny graph}
\label{fg:4,2digraph}
 \end{figure}

\begin{proof}[Proof of Theorem~\ref{thm:final_forbidden}]
For $i,j\geq 2$,
Propositions~\ref{prop:forbidden_star}, \ref{prop:forbidden_bipartite}, \ref{prop:forbidden_fan}, and Theorems~\ref{thm:complete-free} and \ref{thm:complete-free_j=2} 
provide the given forbidden subgraphs of an $(i,j)$ phylogeny graph.\end{proof}

\section{Closing remarks} \label{sec:number-of}
Given a hole $H$ in the underlying graph of an $(i,j)$ digraph $D$,
it is interesting to ask if the subgraph of $P(D)$ induced by $V(H)$ still contains a hole.
Theorem~\ref{thm:main} partially answers the question for $(i,2)$ phylogeny graphs. 
The authors would like to see the above question answered. 
\section{Acknowledgement}
We are thankful to the reviewer whose suggestions highly advanced the clarity of the paper.

This work was supported by Science Research Center Program through the National Research Foundation of Korea(NRF) grant funded by the Korean Government (MSIT)(NRF-2022R1A2C1009648 and 2016R1A5A1008055).


\end{document}